\newtheorem{thm}{Theorem}[section]
\newtheorem{lem}[thm]{Lemma}
\newtheorem{defn}[thm]{Definition}
\newtheorem{prop}[thm]{Proposition}
\newtheorem{coro}[thm]{Corollary}
\newtheorem{rmk}[thm]{Remark}
\newtheorem{ex}[thm]{Example}
\numberwithin{equation}{section}
\newcommand{\norm}[1]{\left\Vert#1\right\Vert}
\newcommand{\abs}[1]{\left\vert#1\right\vert}
\newcommand{\R}{\mathbf{R}}
\newcommand{\C}{\mathbf{C}}
\newcommand{\Q}{\mathbf{Q}}
\newcommand{\Z}{\mathbf{Z}}
\newcommand{\PSL}{\mathrm{PSL}}
\newcommand{\Aut}{\mathrm{Aut}}
\newcommand{\sC}{\mathsf{C}}
\newcommand{\sS}{\mathsf{S}}
\newcommand{\cp}{\C P}
\title{Crossing the transcendental divide: from Schottky groups to algebraic curves}
\author{Samantha Fairchild, Ángel David Ríos Ortiz}
\begin{document}
\begin{abstract}
Though the uniformization theorem guarantees an equivalence of Riemann surfaces and smooth algebraic curves, moving between analytic and algebraic representations is inherently transcendental. Our analytic curves identify pairs of circles in the complex plane via free groups of M\"obius transformations called Schottky groups. We construct a family of non-hyperelliptic surfaces of genus $g\geq 3$ where we know the Riemann surface as well as properties of the canonical embedding, including a nontrivial symmetry group and a real structure with the maximal number of connected components (an $M$-curve). We then numerically approximate the algebraic curve and Riemann matrices underlying our family of Riemann surfaces.
\end{abstract}
\maketitle

\section{Introduction}\label{sec:introduction}

Riemann surfaces are ubiquitous objects in mathematics. As such, they come in different incarnations such as geometric, analytic, and algebraic. Each incarnation has its advantages and is suitable for different purposes. Although the passage from the geometric to the algebraic side is theoretically possible, in order to make this passage effective (i.e. with a computer) we need to choose the right geometric and algebraic presentations. We give steps in bridging the \emph{transcendental divide} between Riemann surfaces presented as quotients of Schottky groups, a presentation coming from Geometric Group Theory, and Riemann surfaces represented as canonical curves in $\cp^{g-1}$, their most natural presentation in Algebraic Geometry. 

Inspired by Bernd Sturmfels, and coined in \cite{CFM22}, the transcendental divide refers to the fact that explicitly or numerically connecting a Riemann surface to an algebraic curve requires transcendental functions, that is, functions which cannot be written as polynomials. The transcendental divide has been a large source of mathematics for the past century, inspired by work of Riemann, Poincar\'e, and Klein  \cite{farkas_kra, poincare}. A limiting aspect of working explicitly with the classical theory were the lack of computational tools, which have seen great advances in recent years \cite{BK13}. Particular work includes computational work on computing Riemann theta functions \cite{computing04}, which has now been implemented in two programming languages \cite{SD16, AC21}. 

We have three main contributions. Firstly, we construct an interesting family of non-hyperelliptic Riemann surfaces that admit an action of the dihedral group and their real structure attains the maximal number of ovals. Secondly, we present an algorithm that evaluates the canonical image of a Riemann surface given in terms of Schottky data. Finally, we show in low genus how our algorithm effectively captures the theoretical properties of our family of curves. This is showed in low genus by approximating the ideal of a curve and computing its Riemann matrix. One of the main features of our framework is its flexibility, paving the way of extending our work for higher genus and therein bringing together again Geometric Group Theory, Algebraic Geometry and Computational Mathematics. We now introduce some notation in order to describe our results.

We construct a genus $g\geq 2$ Riemann surface as follows. Given $2g$ circles $\sC_1,\ldots, \sC_g$ and $\sC_1', \ldots, \sC_g'$, where each pair of circles are disjoint circles in the Riemann sphere $\cp^1$ and $\sC_j, \sC_j'$ have the same radius (See \cref{fig:Schottkyex}), consider M\"obius transformations $f_1,\ldots, f_g$ which map the inside of $\sC_j$ to the outside of $\sC_j'$ for each $j=1,\ldots, g$. The \textit{(classical) Schottky group} $G$ is the free group generated by $f_1,\ldots, f_g$. Each generator $f_j$ has fixed points $A_j, B_j$, and the limit set $\Lambda$ is the union of orbits of $A_j,B_j$ for $j=1,\ldots, g$. The group $G$ acts freely and properly discontinuously on the domain of discontinuity $\Omega = \cp^1\setminus \Lambda$, and the genus $g$ Riemann surface is given by the quotient $\Omega/G$. The study of Schottky groups and other discrete subgroups of $\PSL(2,\C)$ is a beautiful and rich theory, with some accessible entry points from \cite{indras_pearls, marden}.

\begin{figure}
    \centering
    \begin{tikzpicture}[line cap=round,line join=round,x=1 cm,y=1 cm]
        \draw[dashed] (-1.5,0)--(1.5,0);
        \draw[dashed] (0,-1.5)--(0,1.5);
        \draw[fill =gray!30, draw = cyan] (.897,.518) circle (.268); 
        \node at (1.5,1) {$\sC_1$};
        \draw[fill =gray!30, draw = cyan] (.897,-.518) circle (.268); 
        \node at (1.5,-1) {$\sC_1'$};
        \draw[fill =gray!30, draw= purple](-.897, .518) circle (.268);  
        \node at (-1.5,1) {$\sC_2'$};
        \draw[fill =gray!30, draw= purple](-.897, -.518) circle (.268);  
        \node at (-1.5,-1) {$\sC_2$};

         \draw[->]  (1.2,.5) to [out = -30, in =30]  (1.2,-.5) ;  
        \node at (1.7,0) {$f_1$};
        \draw[->]  (-1.2,-.5) to [out = 150, in = -150]  (-1.2,.5) ;  
        \node at (-1.7,0) {$f_2$};
        
\draw[smooth]  (3,1) to[out=30,in=150] (5,1) to[out=-30,in=210] (6,1) to[out=30,in=150] (8,1) to[out=-30,in=30] (8,-1) to[out=210,in=-30] (6,-1) to[out=150,in=30] (5,-1) to[out=210,in=-30] (3,-1) to[out=150,in=-150] (3,1);
\draw[smooth] (3.4,0.1) .. controls (3.8,-0.25) and (4.2,-0.25) .. (4.6,0.1);
\draw[smooth] (3.5,0) .. controls (3.8,0.2) and (4.2,0.2) .. (4.5,0);
\draw[smooth] (6.4,0.1) .. controls (6.8,-0.25) and (7.2,-0.25) .. (7.6,0.1);
\draw[smooth] (6.5,0) .. controls (6.8,0.2) and (7.2,0.2) .. (7.5,0);
\draw[color = purple] (5.0,0.0) arc(0:360:1 and 1.14/2) node[above, pos = .25] {$\sC_2 = \sC_2'$};;
\draw[color=cyan] (8.0,0.0) arc(0:360:1 and 1.14/2) node[above, pos = .25] {$\sC_1 = \sC_1'$};;
    \end{tikzpicture}
    \caption{A genus 2 curve by gluing the circles $\sC_j$ to $\sC_j'$ for $j=1,2$ under M\"obius transformations $f_1,f_2$.}
    \label{fig:Schottkyex}
\end{figure}
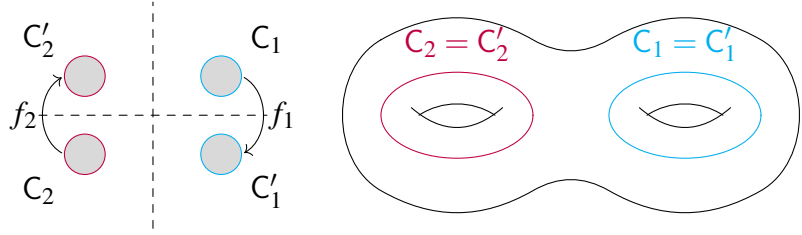

The key transcendental functions in this paper are \textit{Poincar\'e theta series} \cite[p. 148]{poincare}, which define a basis of Abelian differentials on the Riemann surface $S$.

\begin{prop}\label{prop:oneforms}\cite[Section~5.1]{BK13}
    The following Poincar\'e theta series, when convergent, form a basis of holomorphic differentials for $n=1,\ldots, g$
    $$\omega_n = \sum_{f \in G/G_n} \left(\frac{1}{z- f(B_n)} - \frac{1}{z - f(A_n) }\right)\,dz$$
    where $G/G_n$ are cosets with representatives given by all elements $f_{i_1}^{j_1}\cdots f_{i_k}^{j_k}$ where $i_k \neq n$ and $A_n, B_n$ are the fixed points of $f_n$.
\end{prop}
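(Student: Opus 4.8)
The plan is to realise each $\omega_n$ as a relative Poincar\'e series built from a single elementary meromorphic differential on $\cp^1$, to deduce that it descends to a holomorphic $1$-form on $S=\Omega/G$, and then to compute its periods around the circles $\sC_j$; these periods turn out to be proportional to $\delta_{jn}$, which forces linear independence, and since the space of holomorphic $1$-forms on a genus-$g$ surface is $g$-dimensional, the $\omega_n$ must then form a basis.

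\smallskip
\noindent\textbf{Step 1 (seed form).} I would first introduce
$$\phi_n=\left(\frac{1}{z-B_n}-\frac{1}{z-A_n}\right)dz ,$$
a meromorphic $1$-form on $\cp^1$ whose only singularities are simple poles at $A_n$ and $B_n$ with residues $-1$ and $+1$; these sum to zero, so $\phi_n$ is also holomorphic at $\infty$. Since $f_n$ fixes $A_n$ and $B_n$, a coordinate carrying $\{A_n,B_n\}$ to $\{0,\infty\}$ turns $\phi_n$ into a constant multiple of $dz/z$ and $f_n$ into multiplication by a scalar, so $\phi_n$ is invariant under the cyclic group $G_n=\langle f_n\rangle$. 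The series in the statement is then exactly the relative Poincar\'e series $\omega_n=\sum_{f\in G/G_n}f_{*}\phi_n$, where $f_{*}$ carries the poles of $\phi_n$ forward to $f(A_n),f(B_n)$; the $G_n$-invariance of $\phi_n$ makes each summand depend only on its coset, and the fact that left multiplication permutes $G/G_n$ shows the sum is $G$-invariant. In the convergent regime of the hypothesis these manipulations are legitimate, and since all poles of $\omega_n$ lie in the limit set $\Lambda$, the form $\omega_n$ is holomorphic on $\Omega$ and, being $G$-invariant, descends to a holomorphic differential on $S$.

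\smallskip
\noindent\textbf{Step 2 (periods).} Write $\Delta_j$ for the inside of $\sC_j$ and $\Delta_j'$ for the inside of $\sC_j'$, and orient $\sC_j$ as the boundary of $\Delta_j$; since $f_n$ maps $\Delta_n'$ into itself and $f_n^{-1}$ maps $\Delta_n$ into itself, the fixed points satisfy $B_n\in\Delta_n'$ and $A_n\in\Delta_n$. The crux is the nesting of Schottky disks: assign to each generator-letter a disk (so $f_m$, which sends the outside of $\sC_m$ into $\Delta_m'$, is assigned $\Delta_m'$; likewise $f_m^{-1}$ is assigned $\Delta_m$). Then for a reduced word $f=s_1\cdots s_\ell$ representing a coset of $G/G_n$ — that is, with rightmost letter $s_\ell\notin\{f_n^{\pm1}\}$ — a ping-pong induction shows that $f(A_n)$ and $f(B_n)$ both lie in the single Schottky disk assigned to the leftmost letter $s_1$ (the inductive hypotheses hold because $\overline{\Delta_n}$ and $\overline{\Delta_n'}$ are disjoint from the disk assigned to $s_\ell^{-1}$, and reducedness forbids consecutive letters from being assigned the same disk). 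That disk is either contained in $\Delta_j$ or has closure disjoint from $\overline{\Delta_j}$, so for every \emph{nonidentity} coset representative the poles $f(A_n)$ and $f(B_n)$ lie on the same side of $\sC_j$ and their residues $-1,+1$ cancel in the residue count; only the identity term contributes, and its pole $A_n\in\Delta_n$ is enclosed by $\sC_j$ precisely when $j=n$ (while $B_n\in\Delta_n'$ is never enclosed by $\sC_j$). Interchanging sum and integral is licensed by uniform convergence on the compact set $\sC_j\subset\Omega$, so the residue theorem gives $\int_{\sC_j}\omega_n=-2\pi i\,\delta_{jn}$ (the overall sign depends only on the labelling of $A_n,B_n$), a number intrinsic to $S$ because $\omega_n$ is $G$-invariant and $\sC_j$ closes up to a loop on $S$.

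\smallskip
\noindent\textbf{Step 3 (conclusion).} If $\sum_n c_n\omega_n=0$, then $c_j=-\tfrac{1}{2\pi i}\int_{\sC_j}\big(\sum_n c_n\omega_n\big)=0$ for every $j$, so $\omega_1,\dots,\omega_g$ are linearly independent; since the space of holomorphic differentials on the genus-$g$ surface $S$ has dimension $g$, they form a basis. The only genuinely delicate part is Step~2: the bookkeeping that tracks which translate of each fixed point falls into which Schottky disk, together with pinning down the ``when convergent'' hypothesis precisely enough (e.g.\ normal convergence on compact subsets of $\Omega$) to justify the termwise rearrangements and the term-by-term integration. Everything else — the single seed form, its cyclic symmetry, and the dimension count — is formal.
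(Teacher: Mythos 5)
Your proof is correct, and it is essentially the standard argument that the paper delegates to the citation \cite[Section~5.1]{BK13}: $G_n$-invariance of the seed form $d\log\frac{z-B_n}{z-A_n}$, descent to $S$, the ping-pong nesting of Schottky disks to show $\int_{\sC_j}\omega_n=\pm 2\pi i\,\delta_{jn}$, and the dimension count. One cosmetic slip: with the paper's convention that $A_n$ is the \emph{attracting} fixed point of $f_n$, your own observation that $f_n$ maps $\Delta_n'$ into itself places $A_n$ in $\Delta_n'$ and $B_n$ in $\Delta_n$ (the opposite of what you wrote), but as you note this only flips the sign of the period and affects nothing in the conclusion.
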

 By evaluating these series we obtain the \emph{canonical map} defined by
\begin{equation}
    \phi_S: S\longrightarrow \cp^{g-1}; \quad z\mapsto [\omega_1(z):\dots:\omega_g(z)].
\end{equation}

The benefit of studying the canonical maps is that the \emph{extrinsic} geometry of the Riemann surface in $\cp^{g-1}$ is reflected in the \emph{intrinsic} structure of the Riemann surface itself. For hyperelliptic Riemann surfaces, as is the case for genus two, the map $\phi_S$ is a $2:1$ map onto $\cp^1$, whereas for \emph{general} Riemann surfaces $\phi_S$ turns out to be an embedding. 

\subsection{A new family of examples}
We are not aware of any explicit equations for a single canonical curve coming from a Schottky group, but we include after the statement of our results some context of different techniques used for understanding the canonical curve for Schottky groups in general. The main result of this paper constructs a family of Schottky groups where the curves have a real structure and sufficiently many automorphisms to verify the computational efficacy of the approximate canonical curve. This theorem includes work of Hidalgo \cite{Hidalgo02} in the case of genus $3$, and extends the family of examples to arbitrary genus.

We now present the construction of our family of examples and refer to \cref{sec:Schottky} for the relevant notations. We denote $\sC(z_i,r)$ for the circle of radius $r$ centered at $z_i$.
\begin{defn} \label{def:family}
Fix a genus $g\geq 2$. Let $0<\eta < \theta <\frac{\pi}{g}$ be real numbers. Define a Riemann surface $S_{\eta,\theta}$ constructed from a Schottky group $G_{\eta,\theta}$  defined by identifying the circle pairs from the curves $\sC_i = \sC(z_i,r)$ to $\sC_i' = \sC(w_i,r)$ with positive real trace where $\sC_1$ is the curve orthogonal to the unit circle passing through $e^{i\eta}$ and $e^{i\theta}$, $\sC_1'$ is the conjugate of $\sC_1$, and the remaining $\sC_i$ are given by rotations of $\frac{2\pi i}{g}$ (see \cref{fig:g23} for $g=3$).
\end{defn}

To state the properties of $S_{\eta,\theta}$, we need the following definitions. A \textit{real structure} on a Riemann surface $X$ is an anti-holomorphic involution $\tau$. A real surface $X$ is called an \textit{$M$-curve} if it it attains the maximum number of ovals fixed under $\tau$, namely $g+1$. We will use the geometric convention that $D_g$ represents the dihedral group of order $2g$. Recall that the signature of a group $H$ acting holomorphically on a Riemann surface $X$ is the tuple $(g_0;m_1,\dots,m_2)$ where $g_0$ is the genus of the quotient $X/H$ and the map $X\to X/H$ is branched over $s$ points with orders $m_1,\dots,m_s$.

\begin{thm}\label{thm:family}
 The Riemann surface $S:= S_{\eta,\theta}$ is an $M$-curve with real structure defined by inversion about the unit circle $z\mapsto 1/\bar{z}$, which is invariant under the action of the Schottky group $G:= G_{\eta,\theta}$. The group of automorphisms $\Aut(S)$ contains the dihedral group $D_g$, which acts on $S$ with signature $(0;2,2,2,2,g)$. 
\end{thm}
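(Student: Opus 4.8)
The plan is to verify the three assertions --- the real structure, the $M$-curve property, and the $D_g$-action with the claimed signature --- by working on the level of the Schottky domain $\Omega$ and pushing the relevant symmetries down to $S = \Omega/G$. First I would set up explicit coordinates for the configuration in \cref{def:family}: the circle $\sC_1$ orthogonal to the unit circle through $e^{i\eta}, e^{i\theta}$ has a center on the ray through $e^{i(\eta+\theta)/2}$ at distance $\sec\!\big(\tfrac{\theta-\eta}{2}\big)$ and radius $\tan\!\big(\tfrac{\theta-\eta}{2}\big)$; $\sC_1'$ is its complex conjugate, and $\sC_i, \sC_i'$ for $i=2,\dots,g$ are obtained by rotating by $e^{2\pi i/g}$. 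The generator $f_i$ is the unique M\"obius map sending the inside of $\sC_i$ to the outside of $\sC_i'$ with positive real trace. One checks that this configuration is genuinely Schottky (the $2g$ closed disks are pairwise disjoint) precisely because $0<\eta<\theta<\pi/g$, and that $\Lambda \subset \partial\H$-like arcs so that the open disk and its exterior are both nonempty components of $\Omega$.

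For the real structure, let $\sigma(z) = 1/\bar z$ denote inversion in the unit circle. Since $\sC_1$ is orthogonal to the unit circle, $\sigma$ preserves $\sC_1$ and swaps its two sides while fixing the arc between $e^{i\eta}$ and $e^{i\theta}$; since $\sC_1' = \overline{\sC_1}$ and $\sigma(z) = \overline{1/z}$ commutes with $z\mapsto 1/z$, one reads off $\sigma(\sC_1) = \sC_1$, $\sigma(\sC_1') = \sC_1'$, and more generally $\sigma$ permutes the family $\{\sC_i, \sC_i'\}$. The key computational step is to show $\sigma G \sigma^{-1} = G$, i.e. $\sigma f_i \sigma^{-1} = f_i^{\pm 1}$ (as an element of $G$, using the positive-real-trace normalization); I would do this by checking that $\sigma f_i \sigma$ maps the inside of a circle of the family to the outside of its partner and has the correct trace sign, so it must equal the corresponding generator by uniqueness. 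Then $\sigma$ descends to an antiholomorphic involution $\tau$ on $S$, which is the asserted real structure.

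For the $M$-curve claim I would count the ovals of $\tau$, i.e. the connected components of the fixed-point set, by intersecting $\mathrm{Fix}(\sigma) = \{|z|=1\}$ with a fundamental domain for $G$ and tracking how the boundary identifications glue the resulting arcs. Because each $\sC_i$ is orthogonal to the unit circle and identified with $\sC_i'$ (also orthogonal), the unit circle meets the standard fundamental domain $\Omega_0$ (complement of the $2g$ disks) in $2g$ arcs which glue in pairs under $f_i$; carefully following the cyclic gluing combinatorics should yield exactly $g+1$ closed curves fixed by $\tau$, the maximum allowed by Harnack's inequality, so $S$ is an $M$-curve. The main obstacle I anticipate is precisely this bookkeeping: one must verify that the gluing does not merge ovals (which would give fewer than $g+1$) nor create extra components, and the cleanest route is probably to identify $S/\tau$ as a planar surface (a sphere with $g+1$ boundary circles) --- equivalently, show the quotient orbifold of the "real locus" is a disk with $g$ holes --- which forces the oval count. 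One should also confirm $\tau$ is non-separating vs. separating consistently with the $M$-curve normalization; for an $M$-curve of genus $g$ the complement of the ovals has two components, which again follows from the planarity of $S/\tau$.

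Finally, for the automorphism group: the rotation $\rho(z) = e^{2\pi i/g} z$ permutes the circle pairs cyclically by construction, hence (again by the trace-sign uniqueness of the generators) conjugates $G$ to itself with $\rho f_i \rho^{-1} = f_{i+1}$ (indices mod $g$), so $\rho$ descends to an order-$g$ automorphism of $S$; combined with $\tau$, or with the antiholomorphic $\sigma$ composed with another reflection to get a holomorphic involution $\iota$ (e.g. $z \mapsto 1/z$ or $z\mapsto \bar z$ followed by $\sigma$), one obtains a group containing $\langle \rho, \iota \mid \rho^g = \iota^2 = 1, \iota\rho\iota = \rho^{-1}\rangle \cong D_g$ inside $\Aut(S)$. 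To pin down the signature $(0; 2,2,2,2,g)$ I would apply Riemann--Hurwitz to $S \to S/D_g$: with $|D_g| = 2g$ and genus $g$, the formula $2g - 2 = 2g\big({-}2 + \sum(1 - 1/m_j)\big)$ must be solved, and the proposed branch data $(2,2,2,2,g)$ gives $2g(-2 + 4\cdot\tfrac12 + (1-\tfrac1g)) = 2g(1 - \tfrac1g) = 2g-2$, consistent with $g_0 = 0$. To show these are the actual ramification points I would locate the fixed points of the $g$ elements of order $2$ in $D_g$ (the "reflections") and of $\rho$: $\rho$ and its powers have two fixed points on $\cp^1$ (namely $0,\infty$), contributing the branch point of order $g$, while the involutions fix finitely many points whose images give the four order-$2$ branch points; checking there are exactly four such classes is the remaining verification, and it can be cross-checked against Hidalgo's genus-$3$ computation in \cite{Hidalgo02}. $\qed$
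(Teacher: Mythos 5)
Your proposal is correct and follows the same overall strategy as the paper: realize all the symmetries on the Schottky domain $\Omega$, check that they normalize $G$ so that they descend to $S$, and obtain the signature from Riemann--Hurwitz. The one genuine difference is the mechanism used for the normalization checks. The paper builds everything out of four reflections --- $\sigma$ in $\sS^1$, $\sigma_1$ in $\R$, $\sigma_2$ in the line $L$ of slope $\pi/g$, and $\sigma_3$ in $\sC_1$ --- setting $r=\sigma_2\sigma_1$, $h=\sigma\sigma_1$ and, crucially, \emph{defining} the generators as $f_k=r^k\sigma_1\sigma_3 r^{-k}$; with that choice, $rf_kr^{-1}=f_{k+1}$, $hf_kh^{-1}=f_{g-k}^{-1}$ and $\sigma f_k\sigma=f_k$ become two-line identities (all four reflections commute with $\sigma$), and the presentation $\langle r,h\rangle\simeq D_g$ comes from the Poincar\'e polyhedron theorem applied to the sector between $\R$ and $L$. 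You instead invoke uniqueness of the positive-real-trace map pairing two given circles; that works (conjugation by $\sigma$ or by the rotation acts on the matrix entries in a way that preserves the trace normalization), but it costs you a small uniqueness lemma that the reflection calculus gives for free. Two further remarks: your parenthetical that $\sigma$ ``swaps the two sides'' of $\sC_1$ is backwards --- since $\sC_1$ is orthogonal to $\sS^1$, inversion preserves each complementary disk of $\sC_1$, which is in fact what you need for $\sigma f_i\sigma^{-1}$ to again map the inside of $\sC_i$ to the outside of $\sC_i'$ --- and your explicit oval count (each of the $g$ short arcs between $\sC_j'$ and $\sC_j$ closing up to an oval, the $g$ long arcs chaining into a single further oval, total $g+1$) is more detailed than anything the paper writes down in its proof, which essentially leaves the $M$-curve count to the Fuchsian picture; so that part of your plan is a useful addition rather than a deviation.
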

    We have the following results split into the hyperelliptic case $g=2$, and the non-hyperelliptic case $g\geq 3$. Recall that a Weierstrass point in a Riemann surface $X$ is a point $P$ for which there exists a meromorphic function with a single double pole at $P$.
\begin{thm} \label{thm:maing2}
    For $g=2$, the Riemann surface $S_{\eta,\theta}$ is represented by $y^2 = \prod_{j=1}^6 (x- \tilde{\phi}_S(x_j)),$ where the $x_j$ are the Weierstrass points
$$x_1 =1, \quad x_2 =-1, \quad x_3 = e^{i\theta},\quad x_4 =  e^{i\eta},\quad x_5 = e^{i(\pi-\theta)},\quad x_6 =e^{i(\pi-\eta)}$$
and $\tilde{\phi}(z) = \omega_2(z)/\omega_1(z)$ is the coordinate chart where $\omega_1(z)\neq 0$, and the images $\tilde{\phi}(x_j) $ are real and given in reciprocal pairs.
\end{thm}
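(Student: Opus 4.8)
The plan is to exploit the fact that in genus $2$ every Riemann surface is hyperelliptic, so $S := S_{\eta,\theta}$ carries a unique hyperelliptic involution $\iota$ whose quotient $S/\iota \cong \cp^1$ is realized precisely by the canonical map $\phi_S$, which Proposition~1.1 tells us is a $2:1$ map to $\cp^1$ via $z \mapsto [\omega_1(z):\omega_2(z)]$. In the chart where $\omega_1 \neq 0$, this is the function $\tilde\phi(z) = \omega_2(z)/\omega_1(z)$. Since $S$ is presented as $\Omega/G$, the involution $\iota$ lifts to an automorphism of $\Omega$ normalizing $G$; by Theorem~1.4 the dihedral group $D_2 = \Z/2 \times \Z/2$ already sits inside $\Aut(S)$, and for $g=2$ one of these involutions must be the hyperelliptic one. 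First I would identify $\iota$ explicitly as (a lift of) one of the order-two elements coming from the $D_2$-action together with the real structure $z \mapsto 1/\bar z$ — the natural candidate being the holomorphic involution $z \mapsto 1/z$ (or a suitable Möbius conjugate adapted to the normalization placing the circles symmetrically about the unit circle and the real axis), since this commutes with the rotational symmetry and with inversion.

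Next I would locate the six Weierstrass points, which are exactly the six fixed points of $\iota$ on $S$. The claim is that their preimages in $\Omega$ are the eight points $\pm 1$, $e^{\pm i\eta}$, $e^{\pm i\theta}$ — wait, six points: $\{1, -1, e^{i\theta}, e^{i\eta}, e^{i(\pi-\theta)}, e^{i(\pi-\eta)}\}$. The key geometric observation is that by construction $\sC_1$ is the geodesic orthogonal to the unit circle through $e^{i\eta}, e^{i\theta}$, and $\sC_1'$ is its complex conjugate; the points $e^{i\eta}, e^{i\theta}$ lie on $\sC_1$ and on the unit circle, hence are fixed by inversion $z \mapsto 1/\bar z$, and one checks they are also fixed (up to the $G$-action) by the holomorphic involution $\iota$ — concretely, the composite of inversion with complex conjugation $z \mapsto \bar z$ (which swaps $\sC_1 \leftrightarrow \sC_1'$ and so descends, after composing with $f_1$, to an automorphism of $S$) has these as fixed points. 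The points $\pm 1$ are the intersections of the unit circle with the real axis, fixed by both the rotation-induced symmetry and inversion; the remaining two points are the images under the order-$g=2$ rotation $z \mapsto -z$. So the six Weierstrass points are the four "axis" fixed points together with $\pm 1$, as listed. I would verify each is genuinely a ramification point of $\tilde\phi$ by checking that the two sheets of $\Omega/G \to \cp^1$ come together there, equivalently that $d\tilde\phi$ vanishes, using the symmetry: each listed point is fixed by an anti-holomorphic involution of $S$ and lies on the real locus, and the branch points of a hyperelliptic $M$-curve defined over $\R$ are real.

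Then the curve equation $y^2 = \prod_{j=1}^6 (x - \tilde\phi_S(x_j))$ is immediate from the general theory: a genus $2$ curve with hyperelliptic map having branch points $b_1,\dots,b_6 \in \cp^1$ is $y^2 = \prod (x - b_j)$, and here $b_j = \tilde\phi_S(x_j)$ with $x$ the affine coordinate on the target $\cp^1$ in the $\omega_1 \neq 0$ chart. For the reality and reciprocal-pair statements, I would argue as follows: inversion $z \mapsto 1/\bar z$ is a real structure invariant under $G$ (Theorem~1.4), so it induces an anti-holomorphic involution on the target $\cp^1$; combined with the rotational symmetry $z\mapsto -z$, which in suitable coordinates acts on the target by $x \mapsto 1/x$, one deduces that the set of branch points $\{\tilde\phi_S(x_j)\}$ is stable under both complex conjugation and $x \mapsto 1/x$. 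Being stable under conjugation and, for an $M$-curve, all lying on the fixed locus forces each $\tilde\phi_S(x_j) \in \R$; stability under $x \mapsto 1/x$ pairs them reciprocally, with $x_1 = 1, x_2 = -1$ mapping to the fixed points $\pm 1$ (or $0, \infty$) of that map. The main obstacle I anticipate is the bookkeeping in the second and last steps: pinning down which normalization of the Schottky data from Definition~1.3 makes the holomorphic involution literally $z \mapsto 1/z$ (as opposed to a conjugate), and then computing the induced action of the dihedral generators on the target $\cp^1$-coordinate $x = \omega_2/\omega_1$ precisely enough to get "$x \mapsto 1/x$" and hence the reciprocal pairing — this requires tracking how the Poincaré series $\omega_1, \omega_2$ transform under the symmetries, using the explicit description of the cosets $G/G_n$ in Proposition~1.1 and the fact that the symmetries permute the generators $f_1, f_2$ and their fixed points $A_n, B_n$ in a controlled way.
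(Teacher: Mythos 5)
Your proposal follows essentially the same route as the paper: identify the holomorphic involution $h(z)=1/z$ coming from the $D_2$-action as the hyperelliptic involution, read off the six Weierstrass points as its fixed points on $S=\Omega/G$ (the points $\pm 1$ are fixed in $\Omega$, while $e^{i\psi}$ for $\psi=\theta,\eta,\pi-\theta,\pi-\eta$ is sent to $e^{-i\psi}$, which is glued back to $e^{i\psi}$ by the circle identifications), and obtain the reciprocal pairing from the fact that the rotation $z\mapsto -z$ swaps $\omega_1$ and $\omega_2$, hence acts on the target coordinate by $\tilde{\phi}_S(-z)\cdot\tilde{\phi}_S(z)=1$.

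One step would not go through as you state it: you propose to ``verify each is genuinely a ramification point of $\tilde{\phi}$'' by observing that each listed point lies on the real locus and that the branch points of a real hyperelliptic $M$-curve are real. Reality is a necessary condition, not a sufficient one --- a real point of $S$ need not be a branch point --- so this neither certifies that the six points are Weierstrass points nor that $h$ (rather than some other involution) is the hyperelliptic involution; your phrase ``one of these involutions must be the hyperelliptic one'' is likewise not automatic. The clean argument, which is the one the paper uses, is purely a fixed-point count: by Riemann--Hurwitz a holomorphic involution of a genus $2$ surface has $6-4g_0$ fixed points, where $g_0$ is the genus of the quotient, so an involution with six fixed points forces $g_0=0$ and is therefore the hyperelliptic involution, with fixed points exactly the Weierstrass points (cf.\ \cite[III.7.9]{farkas_kra}). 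Relatedly, your reciprocity argument as phrased only yields that the \emph{set} of branch points is stable under $x\mapsto 1/x$; to get the specific pairs $(x_1,x_2)$, $(x_3,x_5)$, $(x_4,x_6)$ one notes that each pair is exchanged by $z\mapsto -z$ on $S$ (again via the circle identifications, e.g.\ $-e^{i(\pi-\theta)}=e^{-i\theta}\sim e^{i\theta}$) and then applies $\tilde{\phi}_S(-z)\cdot\tilde{\phi}_S(z)=1$. You have all the ingredients; only these two justifications need replacing.
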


See \cref{fig:g23} for reference of the Weierstrass points in \cref{thm:maing2}.
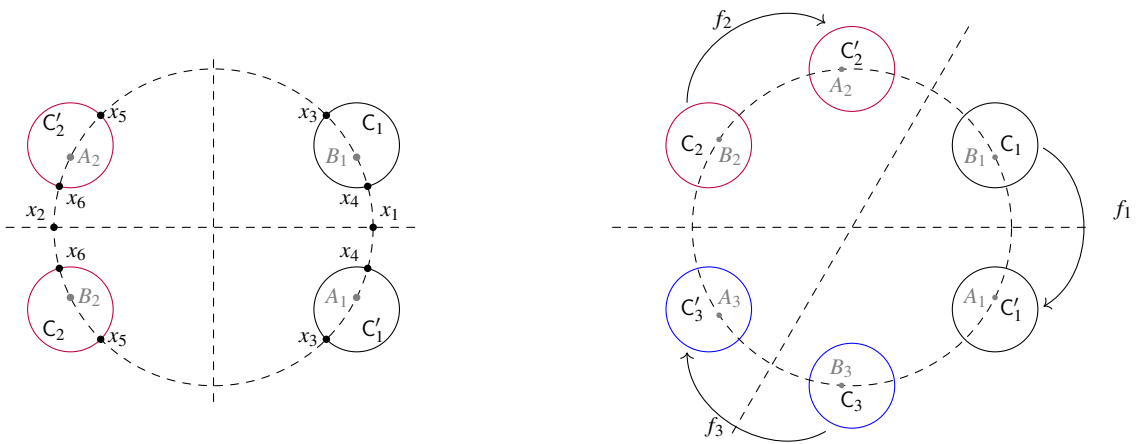
\begin{figure}[hb]
    \centering
    \begin{tikzpicture}[line cap=round,line join=round,x=3 cm,y=3 cm, scale=0.7, every node/.style={scale=0.7}]
        \draw[dashed] (0,0) circle (1);
        \draw[dashed] (-1.3,0)--(1.3,0);
        \draw[dashed] (0,-1.1)--(0,1.1);
        \draw (.897,.518) circle (.268); 
        \node at (1,.65) {$\sC_1$};
        \draw (.897,-.518) circle (.268); 
        \node at (1,-.65) {$\sC_1'$};
        \draw[color= purple](-.897, .518) circle (.268);  
        \node at (-1,.65) {$\sC_2'$};
        \draw[color= purple](-.897, -.518) circle (.268);  
        \node at (-1,-.65) {$\sC_2$};
         \fill (1,0)  circle[radius=2pt] node[above right]{$x_1$};
         \fill (-1,0)  circle[radius=2pt] node[above left]{$x_2$};
        
         \fill (.707, 0.707)  circle[radius=2pt] node[left]{$x_3$};
         \fill (.707, -0.707)  circle[radius=2pt] node[left]{$x_3$};

         \fill (.966,.259)  circle[radius=2pt] node[below left]{$x_4$};
         \fill (.966,-.259)  circle[radius=2pt] node[above left]{$x_4$};

         \fill (-.707, 0.707)  circle[radius=2pt] node[right]{$x_5$};
         \fill (-.707, -0.707)  circle[radius=2pt] node[right]{$x_5$};

         \fill (-.966,.259)  circle[radius=2pt] node[below right]{$x_6$};
          \fill (-.966,-.259)  circle[radius=2pt] node[above right]{$x_6$};

         \fill[gray] ( 0.8965754721680534, - 0.44289098286895795)circle[radius=2pt] node[left]{$A_1$};
        \fill[gray] (0.8965754721680537 , 0.44289098286895806) circle[radius=2pt] node[left]{$B_1$};
        \fill[gray] (-0.8965754721680537, 0.44289098286895806) circle[radius=2pt] node[right]{$A_2$};
        \fill[gray] (-0.8965754721680534, - 0.44289098286895795) circle[radius=2pt] node[right]{$B_2$};

        \draw[dashed] (0+4,0) circle (1);
        \draw[dashed] (-1.5+4,0)--(1.5+4,0);
        \draw[dashed] (-.75+4, -1.299)-- (.75+4, 1.299);
        \draw (.897+4,.518) circle (.268); 
        \node at (1+4,.518) {$\sC_1$};
        \draw (.897+4,-.518) circle (.268); 
        \node at (1+4,-.518) {$\sC_1'$};
        \draw[->]  (1.2+4,.5) to [out = -30, in =30]  (1.2+4,-.5) ;  
        \node [anchor=south] at (1.7+4,0) {$f_1$};

        \draw[color= purple](-.897+4, .518) circle (.268);  
        \node at (-1+4,.518) {$\sC_2$};
        \draw[color= purple](0+4,1) circle (.268);  
         \node at (0+4,1.1) {$\sC_2'$};

        \draw[color= blue](0+4, -1) circle (.268);  
        \node at (0+4,-1.1) {$\sC_3$};
        \draw[color= blue](-.897+4, -.518) circle (.268);  
         \node at (-1+4,-.518) {$\sC_3'$};
        \draw[->]  (-1.033+4,.789) to [out = 90, in = 150]  (-.166+4,1.289) ;  
        \node [anchor = south east] at (-.7+4,1.2) {$f_2$};
        \draw[->] (-.166+4,-1.289)  to [out = -150, in = -90]  (-1.033+4,-.789) ; 
        \node [anchor = north east] at (-.75+4,-1.15) {$f_3$};

        \fill[gray] ( 0.8965754721680534+4, - 0.44289098286895795)circle[radius=1.5pt] node[left]{$A_1$};
        \fill[gray] ( -0.06473289381245049+4, 0.9979026267420412)circle[radius=1.5pt] node[below]{$A_2$};
        \fill[gray] ( -0.831842578355+4, -0.5550116438730832)circle[radius=1.5pt] node[above]{$\quad A_3$};
        
        \fill[gray] (0.8965754721680537+4 , 0.44289098286895806) circle[radius=1.5pt] node[left]{$B_1$};
        \fill[gray] ( -0.831842578355+4, 0.5550116438730832) circle[radius=1.5pt] node[below]{$\quad B_2$};
        \fill[gray] ( -0.06473289381245049+4, -0.9979026267420412) circle[radius=1.5pt] node[above]{$B_3$};       
    \end{tikzpicture}

    \caption{Examples of the curves from \cref{def:family} for $g=2,3$ on the left and right respectively with fixed points $A_j$ and $B_j$ the attracting (resp.~repelling) fixed points of $f_j$. Also on the left see \cref{thm:maing2} for the Weierstrass points $x_i$.}
    \label{fig:g23}
\end{figure}

\begin{thm}\label{thm:maing3} For $g\geq 3$, the Riemann surface $S := S_{\eta,\theta}$ is not hyperelliptic. Hence the canonical map $\phi_S:S\to\cp^{g-1}$ is an embedding.
\end{thm}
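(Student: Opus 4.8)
The plan is to use the automorphism group. By the previous theorem, $\Aut(S)$ contains the dihedral group $D_g$ acting with signature $(0;2,2,2,2,g)$. Suppose toward a contradiction that $S$ is hyperelliptic, with hyperelliptic involution $\iota$. Since $\iota$ is the unique such involution, it is central in $\Aut(S)$, so $\langle\iota\rangle$ is normal and we may form $\Aut(S)/\langle\iota\rangle$, which acts on the quotient $S/\langle\iota\rangle\cong\cp^1$ as a finite group of Möbius transformations. The quotient map $S\to S/D_g$ is already $\cp^1$, so $D_g\langle\iota\rangle/\langle\iota\rangle$ is a quotient of $D_g$ acting on $\cp^1$; the key point is to combine the branching data of $S\to\cp^1$ (the $2g+2$ Weierstrass points, permuted by $D_g$) with the signature $(0;2,2,2,2,g)$ via the Riemann–Hurwitz formula to force a numerical contradiction when $g\geq 3$.

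Concretely, here is the route I would carry out. First, record that a hyperelliptic surface of genus $g$ has exactly $2g+2$ Weierstrass points, and any automorphism permutes them. The subgroup $C_g\subset D_g$ (rotations) acts on $S$; its image in $\Aut(\cp^1)=\PSL(2,\C)$ is a cyclic group of order $g$ or $g/\gcd$, with two fixed points on $S/\langle\iota\rangle$. I would analyze how the order-$g$ element $\rho\in D_g$ lifts: from the signature $(0;2,2,2,2,g)$, the point of branching order $g$ has stabilizer generated by (the image of) $\rho$, so $\rho$ has exactly two fixed points on $S$ if it does not involve $\iota$, or a controlled fixed-point set otherwise. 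Counting: on a hyperelliptic curve an automorphism of order $g$ acting on $S$ descends to an order-dividing-$g$ rotation of $\cp^1$ fixing $2$ points, whose preimages are either $2$ or $4$ Weierstrass/non-Weierstrass points. Matching this against the quotient $S/\langle\rho\rangle$ having genus computed two ways — once via Riemann–Hurwitz from the $D_g$-signature restricted appropriately, once via the hyperelliptic structure — should pin down $g\leq 2$.

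The cleanest version is probably this: the group $\langle \iota, \rho\rangle$ has order $2g$ (if $\iota\notin\langle\rho\rangle$) and $S/\langle\iota,\rho\rangle=\cp^1$; applying Riemann–Hurwitz to $S\to\cp^1$ with this group and comparing with the fact that $S\to S/D_g=\cp^1$ already has the very restricted branching $(2,2,2,2,g)$ forces the two dihedral groups $D_g$ and $\langle\iota,\rho\rangle$ to interact in a way that is impossible unless $g=2$, since for $g=2$ the hyperelliptic involution \emph{is} in the picture (indeed $\phi_S$ is $2:1$) but for $g\geq 3$ adding a central $\iota$ would create a group whose action on $\cp^1$ exceeds what is allowed. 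Alternatively, and perhaps more simply, one invokes the classification: a hyperelliptic curve with an automorphism group containing $D_g$ with this precise signature would have $S/\langle D_g,\iota\rangle$ of genus $0$ with too few branch points to satisfy $\sum(1-1/m_i)>2$, again impossible for $g\geq 3$.

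I expect the \textbf{main obstacle} to be the case analysis of whether the central hyperelliptic involution $\iota$ lies inside the given copy of $D_g$ or extends it, and correspondingly keeping the Riemann–Hurwitz bookkeeping consistent — in particular identifying precisely which of the branch points of signature $(0;2,2,2,2,g)$ are images of Weierstrass points. A slicker alternative that sidesteps this entirely: exhibit directly that the canonical images $\omega_1,\dots,\omega_g$ evaluated at three or more of the symmetric points (e.g.\ the fixed points of the rotation, and points like $e^{i\eta}$, $e^{i\theta}$ and their rotates) span a configuration in $\cp^{g-1}$ that cannot lie on a rational normal curve, which is the canonical model of a hyperelliptic curve; concretely, a hyperelliptic canonical image has all its points on a conic in any plane section, and I would check this fails using the $D_g$-eigenspace decomposition of the space of holomorphic differentials, where the order-$g$ rotation acts with at least three distinct eigenvalues when $g\geq 3$, contradicting the $2:1$-onto-$\cp^1$ structure which would force the differentials to transform through a single $2$-dimensional sub-representation pulled back from $\cp^1$.
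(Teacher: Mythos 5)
Your strategy is genuinely different from the paper's, but it has a fatal gap: the data you propose to use --- the abstract subgroup $D_g\leq\Aut(S)$ together with its signature $(0;2,2,2,2,g)$ --- simply does not determine whether $S$ is hyperelliptic, so no amount of Riemann--Hurwitz bookkeeping on that data can yield the contradiction. Concretely, for $g=3$ take the hyperelliptic curve $y^2=x(x^3-a)(x^3-a^{-1})$ with $\rho(x,y)=(\omega x,\omega^2 y)$ and $\tau(x,y)=(1/x,\,y/x^4)$. One checks $\langle\rho,\tau\rangle\cong D_3$, that $\rho$ fixes exactly the two Weierstrass points over $x=0,\infty$ (one orbit of size $2$, branch order $3$), and that each of the three reflections fixes four points (four orbits of size $3$, branch order $2$); Riemann--Hurwitz then gives quotient genus $0$ and signature exactly $(0;2,2,2,2,3)$. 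So a hyperelliptic genus-$3$ curve realizes precisely the group-plus-signature data of \cref{prop:dihedralaction}, and your claimed numerical contradictions cannot materialize: indeed for $H=D_g\times\langle\iota\rangle$ of order $4g$ one gets $\sum(1-1/m_i)=\tfrac{5}{2}-\tfrac{1}{2g}>2$, which is attained e.g.\ by $(0;2,2,2,2g)$. Your fallback representation-theoretic argument also rests on a false premise: for a hyperelliptic curve the differentials form $\mathrm{Sym}^{g-1}$ of a $2$-dimensional (projective) representation, not a $2$-dimensional subrepresentation, and on the curve above $\rho^*$ acts on $x^{j-1}\,dx/y$ with all $g$ distinct $g$-th roots of unity as eigenvalues --- exactly the spectrum you observe for $S_{\eta,\theta}$. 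Any correct proof must therefore use finer geometric information than the $D_g$-action.

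That is what the paper does, by an entirely different route tied to the Schottky presentation itself. By \cref{lem:propertiesofG} the group $G_{\eta,\theta}$ is Fuchsian and the fixed points of its generators lie on the unit circle in the interleaved cyclic order $(A_1,B_1,A_2,B_2,\dots,A_g,B_g)$. The paper then invokes Keen's criterion (\cref{lem:hyperellipticthenmutuallyharmonicgens}): a hyperelliptic surface is uniformized by a Schottky group admitting a \emph{mutually harmonic} set of generators, i.e.\ a single pair $(u,v)$ with cross-ratio $(A_i,B_i;u,v)=-1$ for all $i$. Transporting this condition through the conjugating M\"obius map forces $(u,v)$ into a position incompatible with the interleaved configuration of the $2g$ fixed points on a common circle once $g\geq 3$, which is the contradiction. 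If you want to salvage an automorphism-based argument, you would need an invariant that actually separates $S_{\eta,\theta}$ from curves like $y^2=x(x^g-a)(x^g-a^{-1})$ --- for instance the explicit real structure and oval count, or the fixed-point configuration --- rather than the signature alone.
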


In the case of low genus, we can use \cref{thm:maing3} to give the exact form of the curve in terms of fixed points of $\phi_S$. In \cref{sec:algebraiccurves} we explore these symmetries for genus $3$ and $4$.

Riemann surfaces of the same genus can be grouped in a moduli space $\mathcal{M}_g$. Roughly speaking, the moduli space of genus $g$ Riemann surface is a complex manifold with the property that every family of Riemann surfaces of genus $g$ has a unique map to $\mathcal{M}_g$. There are two ways of doing this: One analytical (mapping class groups) and one algebraic (stacks). See \cite{ACGVolII} for more details. There are many interesting questions that arise from studying the geometry of the family of Riemann surfaces from \cref{def:family} that are beyond the scope of this paper, but we include a short discussion in \cref{sec:finalmoduli} for those interested. We highlight that an important benefit of \cref{def:family} is that the range of values for $\theta$ and $\eta$ provides a continuous, $2$-dimensional, real parametrization for a family in $\mathcal{M}_g$.

\subsection{Numerical approximations}
In \cref{sec:NumericalExperiments}, we use \cref{def:family} as our test case to numerically evaluate the canonical map by taking finite approximations of the series. In genus 2 (\cref{sec:g2numerical}) we approximate the Weierstrass points, which are the fixed points under the hyperelliptic involution. In genus $\geq 3$ we will not work with hyperelliptic curves, and by sampling a distribution of points on $S$, approximate the equations of the algebraic curve (\cref{sec:g3numerical}).

To do the numerical approximations, we give \cref{alg:circ2schottkywords} as a convenient source for those hoping to implement the techniques of these approximations for examples outside of the scope of this paper. Given a a list of isometric circles, we construct Schottky group elements identifying the circles and construct the elements of the Schottky group of bounded word length. This uses classical properties of M\"obius transformations and the Breadth First search presented in \cite[p.115]{indras_pearls} in updated language. The full code was implemented in Julia \cite{julia} and source code can be found in \cite{mathrepo}.

In \cref{sec:numericalRiemann} we give the formula for approximating the Riemann matrix associated, and compute a specific example for genus $3$. Recall a Riemann matrix is a $g\times g$ matrix with positive definite imaginary part. For $M$-curves, we can in fact always represent the Riemann matrix as a purely imaginary matrix. The power of $M$-curves arise in the study of solutions to the Kadomtsev--Petviashvili (KP) partial differential equation which is used to describe nonlinear wave motion . In fact $M$-curves are all of the nonsingular integrable (specifically finite-gap) solutions of the KP equation \cite{Bobenkofinitegap}. In addition to the computational work surrounding $M$-curves contained in \cite{BK13}, see also more recent work on $M$-curves for example in mathematical physics \cite{AG22} and algebraic geometry \cite{KS22}.

\subsection{Connection to other works} 

When $g=2$, a Riemann surface is always hyperelliptic, in which case the algorithms are not necessarily the most efficient.  In the case of real hyperelliptic surfaces, there exist more efficient algorithms for evaluating the series in \cref{prop:oneforms} due to S. Yu. Lyamaev \cite{Lyamaev}. For those needing more efficient computations than those presented in this paper, we recommend beginning with extremal polynomials \cite{Bogatyrev}. There is is a stronger version of hyperellipticity coming from hyperelliptic Schottky groups, where hyperelliptic structure of the curve can be seen in the group structure as well, see \cite{Keen80} and more recently in \cite{KeenGilman05}.

Staying in the genus 2 setting, in \cite{HidalgoFigueroa2005}, they provide an example where they solve the inverse uniformization theorem by (numerically) constructing a Schottky group that normalizes Bolza's curve. In addition to being of independent interest for numerical results related to Schottky groups on genus $2$ curves, the numerical results of \cite{HidalgoFigueroa2005} are particularly useful to provide calibration of our code. We remark however that in \cite{HidalgoFigueroa2005}, they consider a slightly different series, and to calibrate our results, we can use the fact that our series match at $-1$ and $1/\sqrt{3}$ for $F_\frac{\pi}{6}$. Other examples with in the hyperelliptic setting include a result on constructing Schottky groups and associated algebraic curves for any genus $g\geq 2$ for a family of hyperelliptic curves called May's surfaces in \cite{Hidalgo02}.  

We refer the reader to \cite{Hidalgo2005} for more information on the study of automorphism groups for Shottky groups, and to \cite{HidalgoRealSchottky, BHQ18Real, Hidalgo24} for further discussions on the connections between Schottky groups and real structures in the hyperelliptic and non-hyperelliptic cases. We will focus on curves over $\C$, but there are rich analogous theories over other field. Over $p$-adics, one can also consider Schottky groups, for which the images are then called Mumford Curves \cite{Mumford1972}, see \cite[Page 37]{GerritzenvanderPut} and the recent work of J. Poineau and D. Turchetti, see \cite{PoineauTurchetti2021,PoineauTurchetti2021II,PoineauTurchetti2022}. Over algebraic numbers, the theory of Dessins d'Enfants gives a way to construct such algebraic curves via Beyl\u{i}'s theorem \cite{jwdessins}.

\subsection{Outline} We begin the paper with Preliminaries in \cref{sec:prelim} covering the necessary background in Algebraic curves, Schottky groups, as well as necessary conditions for convergence of the series in \cref{prop:oneforms}. In \cref{sec:maintheorems} we prove the main theorems of the paper: \cref{thm:family}, \cref{thm:maing2} and \cref{thm:maing3}. In \cref{sec:algebraiccurves} we include the computations needed to obtain explicit algebraic curves, which are then used in the numerical experiments in \cref{sec:NumericalExperiments}. We conclude with some final remarks and future directions in \cref{sec:finaldiscussion}.

\subsection*{Acknowledgements} The authors would like to thank Bernd Sturmfels for connecting us on this problem and Alex Elzenaar for contributions at the beginning of this project. We would also like to thank Benjamin Hollering and T\"urk\"u Ozl\"um \c{C}elik for useful discussions. Fairchild was partially supported by Deutsche Forschungsgemeinschaft (DFG) - Project number 507303619. Rios Ortiz was supported by the European Research Council (ERC) under the European Union’s Horizon 2020 research and innovation programme (ERC-2020-SyG-854361-HyperK).

\section{Preliminaries}\label{sec:prelim}

In this section we provide some background and references for more detailed analysis on Algebraic curves (\cref{sec:prelimcurves}) and Schottky groups (\cref{sec:Schottky}).

\subsection{From Riemann surfaces to algebraic curves} \label{sec:prelimcurves}
Every compact Riemann surface $S$ can be embedded in the projective space $\cp^n$ for large enough $n$ as a complex \emph{algebraic} submanifold. In other words, there exists a homogeneous ideal $I\subset \C[z_0,\dots,z_n]$ such that $S$ is biholomorphic to the set of common zeroes of $I$ in $\cp^n$.

\begin{ex}
    Any Riemann surface $S$ of genus $1$ can be embedded in $\cp^2$  as a plane cubic curve, that is the zero set of a homogeneous polynomial of degree $3$ in the variables $z_0,z_1,z_2$. After fixing a point they inherit a group law, these are also called \emph{elliptic curves}.
\end{ex}

Once we have an embedding of a Riemann surface $S$ in $\cp^n$, if $n>3$ we can always isomorphically project $S$ onto $\cp^{n-1}$. This shows that every Riemann surface can be embedded as an algebraic submanifold of $\cp^3$, cf. \cite[pp. 215]{GriffithsHarris1978}. However, and this will be important later, we prefer embeddings that are \emph{intrinsic}. In the following we will describe such embeddings.

Let $S$ be a compact Riemann surface of genus $g$. The \emph{canonical bundle} $\Omega_S$ of $S$ is the the dual of the holomorphic tangent bundle on $S$. Abelian differentials on $S$ are global sections of $\Omega_S$. Locally any Abelian differential can be written as $f(z)\mathrm{d}z$, where $f(z)$ is a holomorphic function. Denote by $H^0(S,\Omega_S)$ the vector space of Abelian differentials on $S$, then a classical theorem due to Riemann states that $H^0(S,\Omega_S)$ has dimension $g$. Let $\omega_1,\omega_2,\dots,\omega_g$ be a basis of $H^0(S,\Omega_S)$. Pointwise evaluation induces the so-called \emph{canonical map}:
\begin{equation}
    \phi_S: S\dashrightarrow \cp^{g-1}, \quad z\mapsto [\omega_1(z):\dots:\omega_g(z)].
\end{equation}
This map is unique up to a change of basis. Notice that in principle $\phi_S$ might not be defined everywhere, meaning that there could be points in $S$ which are evaluated to zero for all elements in the basis, Theorem \ref{thm:canonicalmapisanembedding} below excludes that case and moreover, says that $\phi_S$ is an embedding if and only if there exists no meromorphic function on $S$ having exactly two poles, this last condition means that $S$ is \emph{hyperelliptic}.

\begin{defn}
    A compact Riemann surface $S$ is called \emph{hyperelliptic} if $S$ can be expressed as a two-sheeted branched covering of $\cp^1$. The induced covering involution on $S$ is called the \emph{hyperelliptic involution}.  
\end{defn}

\begin{thm}\cite[III.10.2]{farkas_kra}\label{thm:canonicalmapisanembedding}
    Let $S$ be a compact Riemann surface of genus $g>1$, then the canonical map is defined everywhere. The canonical map is an embedding if and only if $S$ is not hyperelliptic. The degree of $S$ under the canonical map is $2g-2$.
\end{thm}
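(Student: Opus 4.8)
The plan is to realize $\phi_S$, up to a projective change of coordinates, as the map associated to the complete linear system $|K|$ of a canonical divisor $K$, and to read off everything from Riemann--Roch, $\ell(D) - \ell(K - D) = \deg D - g + 1$. Taking $D = K$ and using $\ell(0) = 1$ gives $\deg K = 2g - 2$ and $\ell(K) = g$; the first identity yields the degree statement once $\phi_S$ is known to be an embedding, since a smooth curve embedded by a complete linear system $|D|$ has image of degree $\deg D$. To see $\phi_S$ is a morphism (no base points), fix $P \in S$: Riemann--Roch gives $\ell(P) - \ell(K - P) = 2 - g$, so $\ell(K - P) = \ell(P) + g - 2$, and since $g > 1$ a nonconstant function with a single simple pole would exhibit $S$ as $\cp^1$, forcing $\ell(P) = 1$ and hence $\ell(K - P) = g - 1$. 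Thus $|K|$ is base-point-free and $\phi_S \colon S \to \cp^{g-1}$ is everywhere defined.

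Next I would establish the embedding criterion. The morphism $\phi_S$ is injective and an immersion --- hence, $S$ being compact, a closed embedding --- precisely when $\ell(K - P - Q) = g - 2$ for every effective divisor $P + Q$ of degree $2$, the case $P \neq Q$ controlling injectivity and the case $P = Q$ controlling injectivity of $d\phi_S$ at $P$. By Riemann--Roch $\ell(K - P - Q) = \ell(P + Q) + g - 3$, so the criterion is exactly $\ell(P + Q) = 1$ for all such divisors. But $\ell(P + Q) \geq 2$ means there is a nonconstant meromorphic function with polar divisor $\leq P + Q$; such a function is a degree-$2$ map $S \to \cp^1$ (degree $1$ is impossible since $g > 0$), so $S$ is hyperelliptic --- and conversely a hyperelliptic surface admits such functions, namely those with poles at the two points lying over a chosen point of $\cp^1$. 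Therefore, if $S$ is not hyperelliptic then $\ell(P + Q) = 1$ identically and $\phi_S$ is an embedding.

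For the converse, if $S$ is hyperelliptic I would show $\phi_S$ fails to be injective directly. In a model $y^2 = f(x)$ with $f$ of degree $2g+1$ or $2g+2$, the differentials $x^j\,\mathrm{d}x/y$ for $0 \leq j \leq g - 1$ form a basis of $H^0(S, \Omega_S)$, so $\phi_S$ factors as the degree-$2$ covering $x \colon S \to \cp^1$ followed by the rational normal embedding $x \mapsto [1 : x : \cdots : x^{g-1}]$; in particular $\phi_S$ is $2$-to-$1$ onto its image (equivalently, $\phi_S \circ \iota = \phi_S$ for the hyperelliptic involution $\iota$), and the image is a rational normal curve of degree $g - 1$ rather than $2g-2$.

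The one step needing genuine care --- as opposed to formal manipulation of Riemann--Roch --- is the equivalence ``$\ell(P + Q) \geq 2$ for some effective degree-$2$ divisor'' $\iff$ ``$S$ is hyperelliptic'', and in particular the observation that a nonconstant function in the relevant Riemann--Roch space must have degree exactly $2$ as a branched cover of $\cp^1$. Everything else is bookkeeping with the dimension count.
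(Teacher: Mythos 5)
Your argument is correct and complete: base-point-freeness from $\ell(K-P)=g-1$, the very-ampleness criterion $\ell(K-P-Q)=g-2$ translated via Riemann--Roch into $\ell(P+Q)=1$, the identification of $\ell(P+Q)\geq 2$ with hyperellipticity, and $\deg K = 2g-2$. The paper does not prove this theorem but cites it from Farkas--Kra III.10.2, and your proof is essentially the standard argument given there (and your explicit factorization of $\phi_S$ through the degree-$2$ cover and the rational normal curve in the hyperelliptic case is exactly the Veronese factorization the paper records in its preliminaries).
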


\begin{rmk}
The canonical map characterizes non degenerate smooth complex Riemann surfaces of genus $g$ and degree $2g-2$ in $\cp^{g-1}$, see \cite[pp.139]{farkas_kra}.
\end{rmk}
To study the excluded case in Theorem \ref{thm:canonicalmapisanembedding}, let $f:S\to\cp^1$ be a hyperelliptic Riemann surface. By the Riemann-Hurwitz formula \cite[pp. 216]{GriffithsHarris1978} $f$ has $2g+2$ ramification points (called \emph{Weierstrass points}) and hence $2g +2$ branch points $z_1,\dots,z_{2g+2}$ in $\cp^1$. Therefore $S$ is the Riemann surface associated to the algebraic function
\begin{equation}\label{eq:AssRiemannSurfaceHyperelliptic}
    w^2 = (z-z_1)\cdots(z-z_{2g+2}).
\end{equation}


With the representation given in \ref{eq:AssRiemannSurfaceHyperelliptic} an explicit basis (see \cite[pp. 253]{GriffithsHarris1978} or \cite[III.10.3]{farkas_kra} for more details) for $H^0(S,\Omega_S)$ is given by the following differentials $\omega_1 = \frac{\mathrm{d}z}{w},\omega_2 = z\frac{\mathrm{d}z}{w}, \dots, \omega_g = z^{g-1}\frac{\mathrm{d}z}{w}$. The canonical map therefore factorizes in the following way
\begin{equation}
\begin{tikzcd}
S\ar[rr,"\phi_S"]\ar[rd,swap,"f"] & & \cp^{g-1}\\
 & \cp^1\ar[ur,swap,"\nu_{g-1}"]
\end{tikzcd}
\end{equation}
where $\nu_{g-1}$ is the \emph{Veronese embedding} of degree $g-1$, namely the map given in projective coordinates by $\nu_{g-1}([z:w]) = [z^{g-1}:z^{g-2}w:\cdots:w^{g-1}]$.
Every Riemann surface of genus $g\leq 2$ is hyperelliptic. Representations of hyperelliptic Riemann surfaces can be made very explicit using Weierstrass models, that is equations of the form \ref{eq:AssRiemannSurfaceHyperelliptic}, see \cite[VII.4]{farkas_kra} or \cite{Bogatyrev} for a more computational approach. Although our main interest in this work are non-hyperelliptic surfaces, we will deal with hyperelliptic curves in \ref{thm:maing2}.

Starting from genus $g\geq 3$ the \emph{general} Riemann surface is not hyperelliptic and therefore the canonical map is an embedding. We are interested in the \emph{equations} defining this embedding. In genus $3$ and $4$ non-hyperelliptic Riemann surfaces will be \emph{complete intersections}.

\begin{lem}\label{lem:g34ideal}\cite[pp. 256-259]{GriffithsHarris1978}
Let $S$ be a compact Riemann surface of genus $g$. Assume that $S$ is not hyperelliptic so $\phi_S$ is an embedding. If $g=3$, then the ideal $I_{\phi_S(S)}$ is generated by a unique polynomial of degree $4$. If $g=4$, then $I_{\phi_S(S)}$ is generated by two polynomials of degree $2$ and $3$ respectively.
\end{lem}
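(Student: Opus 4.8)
The plan is to carry out the classical Riemann--Roch dimension count of \cite[pp.~256--259]{GriffithsHarris1978}: for each degree $d$ one compares the space of degree-$d$ forms on $\cp^{g-1}$ with their restrictions to the canonical curve, reads off how many independent hypersurfaces of that degree pass through the curve, and then identifies the generators geometrically. Write $C := \phi_S(S)\subseteq\cp^{g-1}$. By \cref{thm:canonicalmapisanembedding} and the remark after it, $C$ is a smooth, irreducible, nondegenerate curve of degree $2g-2$, and $\mathcal{O}_C(d)$ pulls back under $\phi_S$ to $\Omega_S^{\otimes d}$, of degree $d(2g-2)$; by Riemann--Roch, $h^0(C,\mathcal{O}_C(d)) = d(2g-2)-g+1$ for $d\geq 2$. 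The exact sequence $0\to I_C(d)\to H^0(\cp^{g-1},\mathcal{O}(d))\to H^0(C,\mathcal{O}_C(d))$ then gives, for each $d$, the inequality $\dim I_C(d)\geq h^0(\cp^{g-1},\mathcal{O}(d)) - h^0(C,\mathcal{O}_C(d))$, which I would combine with Bézout-type arguments to pin down $I_C$.

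\textbf{Genus $3$.} Here $\phi_S$ realizes $S$ as a curve $C\subseteq\cp^2$ of degree $2g-2=4$. A curve in $\cp^2$ is a hypersurface, so since $\C[z_0,z_1,z_2]$ is a UFD and $C$ is reduced and irreducible, $I_C$ is principal, $I_C=(F)$ with $F$ irreducible and $\deg F=\deg C=4$. (The count confirms this: $\dim I_C(4)\geq\binom{6}{2}-(16-3+1)=1$, while Bézout rules out any plane curve of degree $\leq 3$, or a second independent quartic, containing the irreducible degree-$4$ curve $C$; hence $\dim I_C(4)=1$ and $I_C(d)=0$ for $d\leq 3$.)

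\textbf{Genus $4$.} Now $C\subseteq\cp^3$ has degree $6$. In degree $2$, $h^0(\mathcal{O}_{\cp^3}(2))=10$ and $h^0(C,\mathcal{O}_C(2))=9$, so $\dim I_C(2)\geq 1$: some quadric $Q$ contains $C$. Two independent such quadrics would either share a plane component, putting the irreducible $C$ inside a hyperplane (against nondegeneracy), or meet in a curve of degree $4<6$ — impossible; so $Q$ is unique up to scalars, and the same dichotomy forces $Q$ to have rank $\geq 3$, hence to be irreducible. In degree $3$, $h^0(\mathcal{O}_{\cp^3}(3))=20$ and $h^0(C,\mathcal{O}_C(3))=15$, so $\dim I_C(3)\geq 5$; since the cubics divisible by $Q$ form a space $Q\cdot H^0(\mathcal{O}_{\cp^3}(1))$ of dimension only $4$, there is a cubic $F\in I_C(3)$ with $Q\nmid F$. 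As $Q$ is irreducible, $F$ does not vanish on $Q$, so $V(Q,F)$ is a complete intersection curve, of degree $6$ by Bézout and of arithmetic genus $\tfrac12\cdot 6\cdot(2+3-4)+1=4$ by adjunction. Since $C\subseteq V(Q,F)$ is irreducible of the same degree $6$, there is no room for extra components, embedded points, or multiplicity, so $V(Q,F)=C$ scheme-theoretically; a complete intersection in $\cp^3$ is projectively normal, so its homogeneous ideal is generated by its defining equations, giving $I_C=(Q,F)$ with $\deg Q=2$, $\deg F=3$.

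The delicate point — and where I would concentrate the argument — is the step in genus $4$ asserting that $V(Q,F)$ coincides with $C$ as a \emph{scheme} (equivalently, that $(Q,F)$ is already saturated), rather than merely as a set. The cleanest justification is Max Noether's theorem that a canonical curve is projectively normal: it promotes all the dimension inequalities above to equalities and shows directly that $(Q,F)$ and $I_C$ have the same Hilbert function, hence are equal; this also explains why no generators of degree $>3$ occur. I would present that Hilbert-function comparison in detail, using the degree and arithmetic-genus bookkeeping above only as a sanity check.
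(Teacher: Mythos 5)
Your argument is correct and is precisely the classical Riemann--Roch dimension count plus Bézout/complete-intersection reasoning from the cited pages of Griffiths--Harris; the paper itself offers no proof beyond that citation, so there is nothing to contrast. The one step you flag as delicate — that $V(Q,F)=C$ scheme-theoretically — is already closed by your own observation that a complete intersection curve is Cohen--Macaulay with no embedded points and has the same degree as the irreducible reduced curve it contains, so $(Q,F)$ is saturated and equals $I_C$.
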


\subsection{Geometry of Schottky groups}\label{sec:Schottky}
\emph{Kleinian} groups are defined as discrete subgroups $G \leq \PSL(2,\C) = \mathrm{Aut}(\cp^1)$. In particular every Schottky group (see Introduction) is Kleinian. Recall, in \cref{sec:introduction} we defined the region $\Omega = \cp^1\setminus \Lambda(G)$ to be the \emph{domain of discontinuity}; where $\Lambda:= \Lambda(G)$ is the limit set. Every Riemann surface of genus at least 2 can in fact be realized as a Kleinian group.

\begin{thm}[Kleinian uniformization]{\cite[Section~VIII.B]{maskit} }\label{prop:uniform_klein}
Let $S$ be a Riemann surface of genus at least $2$. There exists a planar open set $\Omega$ and a group $G$ of M\"obius transforms which preserves $\Omega$, such that $S$ is biholomorphic to $\Omega/G$.
\end{thm}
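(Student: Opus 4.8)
The statement follows quickly from the classical uniformization theorem, so the plan is to reduce to it. First I would pass to the universal covering $\pi\colon\widetilde{S}\to S$. Since $S$ has genus $g\geq 2$, a standard Euler-characteristic computation gives $\chi(S)=2-2g<0$, so $\widetilde{S}$ is a simply connected Riemann surface that is neither $\cp^1$ nor $\C$; by the uniformization theorem it is therefore biholomorphic to the upper half-plane $\H$ (equivalently, the unit disk). Fix such a biholomorphism and use it to transport the covering structure to $\H$, so that we may assume outright that $\widetilde{S}=\H$.

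Next I would identify the deck transformation group and realize it by Möbius maps. The group $\G:=\pi_1(S)$ acts on $\H$ by deck transformations, and deck transformations are biholomorphic automorphisms of $\H$. Every biholomorphic automorphism of $\H$ is a Möbius transformation, concretely an element of $\PSL(2,\R)\le\PSL(2,\C)=\Aut(\cp^1)$; hence $\G$ is realized as a group of Möbius transformations preserving the planar open set $\Omega:=\H\subset\C\subset\cp^1$. Because $\G$ is the deck group of a covering of a manifold, its action on $\H$ is free and properly discontinuous, and the quotient map $\H\to\H/\G$ is (identified with) $\pi$ itself. Therefore $S$ is biholomorphic to $\Omega/\G$; taking $\Omega=\H$ and $G=\G$ completes the argument.

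The plan has essentially no obstacle beyond invoking the uniformization theorem: the only points needing a word of care are that the universal cover cannot be $\C$ or $\cp^1$ — handled by the genus hypothesis, which forces $\chi(S)<0$ and hence a nonelementary fundamental group — and that $\Aut(\H)$ consists exactly of Möbius maps, which is a standard Schwarz-lemma argument. I would also remark that this route produces a \emph{Fuchsian} uniformization, in which $\Omega$ is simply connected; if one instead wanted $\Omega$ to be a genuinely non-simply-connected planar domain with $G$ a Schottky group, that is the content of Koebe's retrosection theorem and requires substantially more work, but it is not needed for the statement as phrased.
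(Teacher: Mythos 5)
Your argument is correct for the statement as literally phrased, but note that the paper does not prove this theorem at all --- it is quoted verbatim from Maskit, and the result actually being invoked there is substantially stronger than what you establish. Your route reduces everything to the classical uniformization theorem: the universal cover of a genus $g\geq 2$ surface is the disk/half-plane, the deck group sits in $\PSL(2,\R)\le\PSL(2,\C)$, and $\Omega=\H$ is a planar open set preserved by that group, so the conclusion holds with a Fuchsian $G$. That is a clean and complete proof of the sentence as written, and your two points of care (ruling out $\C$ and $\cp^1$ via $\chi(S)<0$, and $\Aut(\H)$ consisting of M\"obius maps) are exactly the right ones. However, in the context of this paper the theorem is a stand-in for Koebe's retrosection theorem: the point of ``Kleinian uniformization'' here is that $S$ is $\Omega/G$ where $\Omega$ is the (non-simply-connected) domain of discontinuity of a Schottky group, which is what makes the Poincar\'e series construction of \cref{prop:oneforms} and the whole circle-pairing framework work; the half-plane quotient does not feed into any of that machinery. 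You explicitly flag this distinction at the end, which is the right instinct --- just be aware that if the statement were tightened to what the paper actually uses (e.g.\ ``$G$ a Schottky group'' or ``$\Omega$ the domain of discontinuity''), your proof would no longer apply and the retrosection theorem, with its genuinely harder planarity/cutting argument, would be required.
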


We refer the reader to \cref{fig:limitsets} for a visualization of different limit sets for Schottky groups. One can generally construct Schottky groups by identifying pairs of isometric circles. On the left of \cref{fig:limitsets} is the Fuchsian case, where all circles have centers on a common circle or line, plotted in the case of \cref{def:family} for $g=2$, $\theta = \frac{5\pi}{12}$, $\eta = \frac{\pi}{12}$. In the middle is a classical Schottky group where the circles are disjoint but in more general positions. 
If the circles are allowed to overlap, there is still a notion of Schottky group, where the limit set becomes more interesting as shown in the right of \cref{fig:limitsets}. 

\begin{figure}[hbt]
    \includegraphics[width=.33\textwidth]{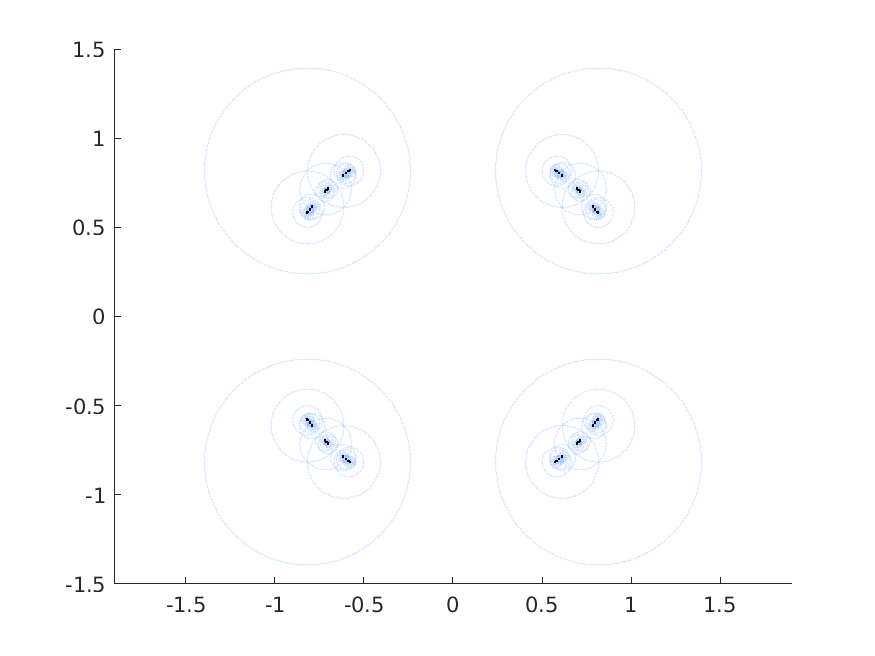}\hfill
    \includegraphics[width=.33\textwidth]{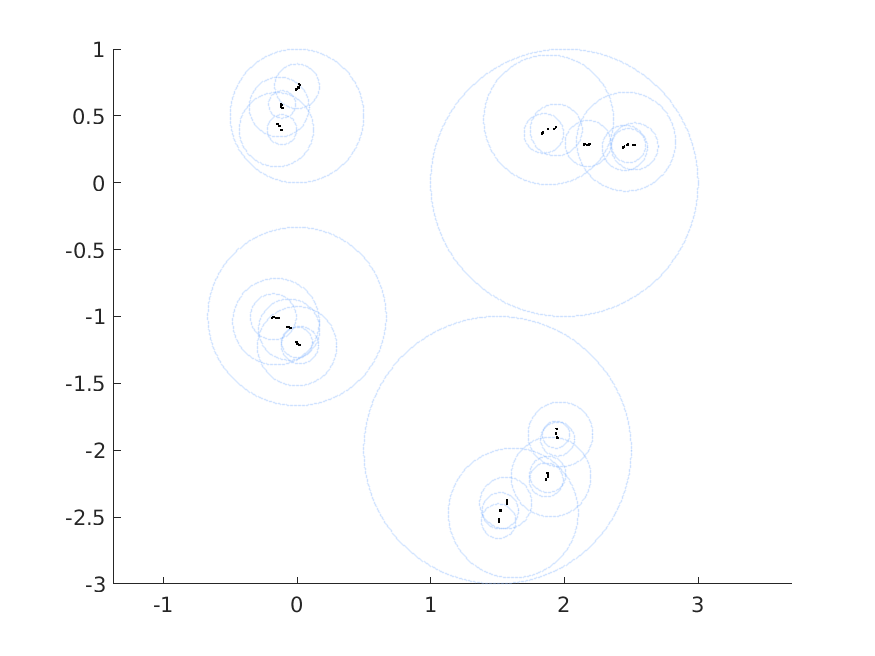}\hfill
    \includegraphics[width=.33\textwidth]{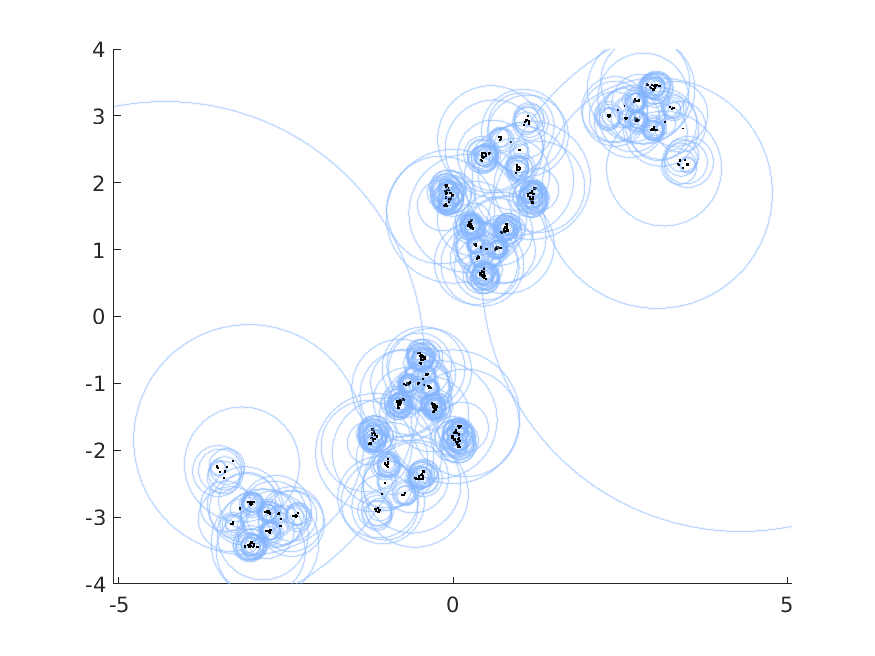}\hfill
    \caption{From left to right, three different limits sets $\Lambda$ for Fuchsian, classical, and general Schottky groups of genus 2.  We represent $\Lambda$ in dark blue and the light blue circles are images of the isometric circles under the elements in $G$.}\label{fig:limitsets}
\end{figure}

We now restrict ourselves to the case of Fuchsian Schottky groups introduced in \cref{sec:introduction} In this case the limit set is a discrete subgroup of the unit circle. This comes from the fact that the Schottky group leaves the unit circle unchanged so the fixed points, and all orbits of the fixed points reside on the circle. Such a group is called \emph{Fuchsian}. In general the complement of $\Omega(G)$ can be rather wild as shown in \cref{fig:limitsets}. We have the following result which is a special case of the results stated in \cite[\S12.2]{Ratcliffe} or \cite[\S II.G,IV.J]{maskit}.
\begin{prop}\label{prop:schottkycondition}
   Let $G$ be a Schottky group with generators $f_1,\ldots, f_g$ pairing disjoint circles with disjoint interiors $\sC_1,\ldots, \sC_g$ with $\sC_1', \ldots, \sC_g'$ so that $f_i(\sC_i) = \sC_i'$ and the image of the region bounded by $\sC_i$ under $f_i$ is the region outside of $\sC_i'$.
   \begin{enumerate}
       \item $G$ is free on $f_1,\ldots, f_g$
       \item $G$ is discrete
       \item The common exterior $U$ defined to be the complement of the $2n$ regions bounded by the $\sC_i, \sC_i'$ forms a fundamental domain for $G$.
       \item $\Omega/G$ is a compact surface of genus $g$.
   \end{enumerate}
\end{prop}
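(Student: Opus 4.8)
The plan is to establish all four statements simultaneously via a single Ping-Pong (or Klein combination) argument applied to the action of $G$ on $\cp^1$. First I would set up the geometry: write $D_i$ and $D_i'$ for the open topological disks bounded by $\sC_i$ and $\sC_i'$, and let $U = \cp^1 \setminus \bigcup_{i}(\overline{D_i}\cup\overline{D_i'})$ be the common exterior. The hypotheses say the $2g$ closed disks are pairwise disjoint and that $f_i$ maps $D_i$ onto the \emph{exterior} of $\sC_i'$, i.e. $f_i(\cp^1\setminus \overline{D_i}) = D_i'$, and consequently $f_i^{-1}(\cp^1\setminus\overline{D_i'}) = D_i$. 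The key combinatorial bookkeeping is then: for any reduced word $w = f_{i_1}^{\varepsilon_1}\cdots f_{i_k}^{\varepsilon_k}$, applying the last letter $f_{i_k}^{\varepsilon_k}$ sends $U$ into one of the disks $D_{i_k}^{(\prime)}$, and each successive letter maps the disk it lands in strictly inside a new disk determined by that letter's index and sign, because distinct disks are disjoint and reducedness ($i_j = i_{j+1}\Rightarrow \varepsilon_j=\varepsilon_{j+1}$) prevents a letter from immediately undoing the previous one. Hence $w(U)$ is contained in a disk disjoint from $U$, so $w(U)\cap U = \emptyset$ and in particular $w \neq \mathrm{id}$; this gives (1).

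For (2), discreteness follows from (3): once $U$ (an open set with nonempty interior) is shown to be a fundamental domain, the identity is isolated in $G$ in the compact-open topology, and a subgroup of the Lie group $\PSL(2,\C)$ with an isolated identity is discrete. So the heart of the matter is (3). I would prove the two defining properties of a fundamental domain. \emph{Local finiteness / disjointness of translates}: for $w\neq \mathrm{id}$ reduced, the argument above shows $w(U)$ lies in some disk $D$, while $U$ lies in the complement of all the disks; more carefully, $w(\overline U)\cap U=\emptyset$, so the $G$-translates of $U$ have pairwise disjoint interiors. \emph{Covering}: given $z\in\Omega$, I want $g\in G$ with $g(z)\in \overline U$. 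If $z\in\overline U$ we are done; otherwise $z$ lies in some disk $D_{i}$ or $D_i'$, and applying the appropriate generator ($f_i^{-1}$ if $z\in D_i'$, $f_i$ if $z\in D_i$) moves $z$ into a union of other disks or into $U$, strictly decreasing a natural "depth" counting how many nested disk-images contain the point. Since $z\notin\Lambda$, this depth is finite (a point of infinite depth would be a limit point), so the process terminates in $\overline U$. This simultaneously shows $\Omega = \bigcup_{g\in G} g(\overline U)$.

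Finally, for (4): $\Omega/G$ is compact because $\overline U \cap \Omega$ maps onto it and $\overline U\cap\Omega$ is compact (it is the complement in $\cp^1$ of finitely many open disks whose closures meet $\Lambda$ only in the finitely many... — more precisely $\overline U$ itself is compact and its image covers the quotient). The genus computation: $\Omega/G$ is obtained from the fundamental domain $\overline U$, a sphere with $2g$ open disks removed (a $2g$-holed sphere), by gluing the boundary circle $\sC_i$ to $\sC_i'$ via $f_i$ for each $i$. An Euler characteristic count gives $\chi(\overline U) = 2 - 2g$ (sphere minus $2g$ disks: $\chi = 2 - 2g$), and the gluing identifies the $2g$ boundary circles in $g$ pairs without changing $\chi$, producing a closed orientable surface of Euler characteristic $2-2g$, hence genus $g$; orientability holds because the $f_i$ are orientation-preserving. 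I expect the main obstacle to be the covering/termination part of (3) — making the "depth decreases" argument rigorous requires using that $z\notin\Lambda$ to rule out an infinite descent, which is exactly the point where the domain of discontinuity hypothesis enters; the freeness and genus parts are then comparatively routine bookkeeping. Since the statement is flagged as a special case of \cite[\S12.2]{Ratcliffe} and \cite[\S II.G, IV.J]{maskit}, I would also simply cite those for the parts of the Klein combination theorem that are standard, and only spell out the Euler-characteristic genus count in detail.
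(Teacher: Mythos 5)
Your proposal is correct, but note that the paper does not actually prove this proposition: it states it as a special case of results in Ratcliffe \S 12.2 and Maskit \S II.G, IV.J and remarks that the proof goes through the Poincar\'e polyhedron theorem. Your ping-pong/Klein-combination argument is exactly the content of Maskit's treatment, so in effect you have supplied the standard proof that the paper outsources. Two small points of care. First, in the covering step your parenthetical ``moves $z$ into a union of \emph{other} disks or into $U$'' is slightly off: if $z\in D_i$ then $f_i(z)$ lands in $\cp^1\setminus\overline{D_i'}$, which can include $D_i$ itself (e.g.\ near the repelling fixed point of $f_i$); what is true, and what your argument actually needs, is that the \emph{depth} (the number of nested level-$k$ disks $w(D_j^{(\prime)})$ containing the point) drops by exactly one, even if the ambient level-one disk is unchanged. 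Second, the termination of that descent is genuinely where the work is: one must show that a point of infinite depth, i.e.\ a point in the intersection of an infinite nested chain of disk images, lies in $\Lambda$, which for classical Schottky groups is usually done by proving that the spherical diameters of the level-$k$ disks tend to $0$ uniformly (a compactness/contraction estimate on $f_i^{\pm1}$ restricted to the other closed disks). You correctly flag this as the crux and propose to cite it, which is consistent with what the paper itself does; everything else (freeness by ping-pong, discreteness from the disjointness of translates of the open set $U$, compactness of $\Omega/G$ from the compact image of $\overline{U}$, and the Euler characteristic count $\chi=2-2g$ preserved under gluing the $2g$ boundary circles in pairs) is routine and correctly executed.
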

The proof requires the Poincare Polyhedron theorem \cite[\S11.2]{Ratcliffe} or \cite[\S IV.H]{marden}, which we use in \cref{prop:dihedralaction}.

We conclude this section by discussing the series in \cref{prop:oneforms}.  Their convergence is summarized in \cite[\S6.1.2]{Bogatyrev}, by making use of the so-called \emph{Schottky criterion}, which gives a sufficient condition for the series to converge absolutely and uniformly on compact subsets of $\Omega$, see \cite{Bogatyrev, Bobenkofinitegap}.

\section{Proof of main theorems}\label{sec:maintheorems}
In this section we will prove \cref{thm:family}, \cref{thm:maing2} and \cref{thm:maing3}. The idea is to generalize a construction by Hidalgo \cite[\S 6]{Hidalgo02} in order to get a Fuchsian Schottky group with many symmetries. Fix an integer $g>1$ and real numbers $\eta,\theta$ such that $ 0 < \eta < \theta < \pi/g $. Let $ L \subset \C $ be the line of slope $\pi/g$ through the origin and recall $\sC_1$ is the circle orthogonal to the unit circle $\sS^1\subset\C$ through the points $e^{i\theta}$ and $e^{i\eta}$; these objects are shown in \cref{fig:g23}. Now define $\sigma$, $\sigma_1$, $\sigma_2$, and $\sigma_3$ to be respectively the reflections across $\sS^1$, the real line $\R\subset\C$, $L$, and $\sC_1$ (see \cref{fig:Dgsigmas})

Construct two conformal mappings $r =\sigma_2\circ\sigma_1$ and $h =\sigma\circ\sigma_1$; these are respectively the clockwise rotation about $0$ by the angle $2\pi/g$ written as $z\mapsto e^\frac{2\pi i}{g} z$ and the order $2$ automorphism with fixed points $\pm 1$ given by $z\mapsto 1/z$.

For $k\in\Z$, define the element $f_k:= r^k \circ\sigma_1\circ\sigma_3 \circ r^{-k},$ and use these to generate the group
\begin{displaymath}
  G := \langle f_k : 0 \leq k \leq g-1 \rangle.
\end{displaymath}

Let $S:= \Omega(G)/G$ be the Riemann surface of genus $g$ obtained as a quotient. We can now prove the last part in \ref{thm:family}.

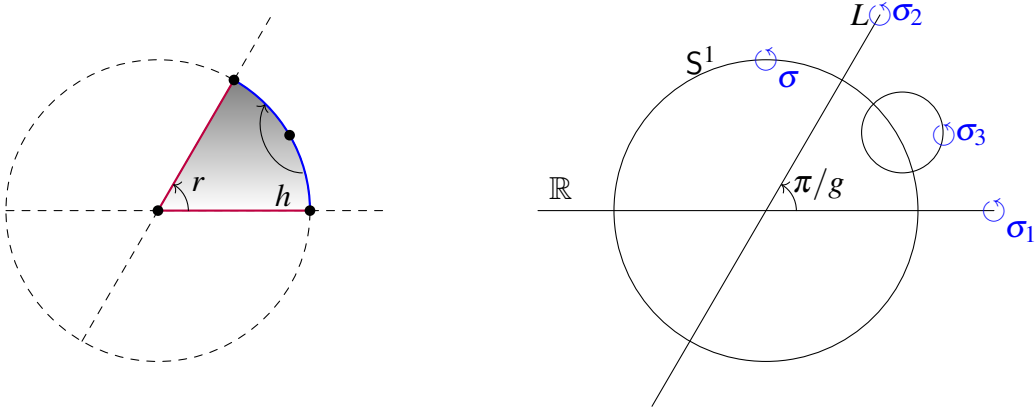
\begin{figure}[htb]
    \centering
    \begin{tikzpicture}[ line cap=round,line join=round,x=2 cm,y=2 cm]
        \shade (0,0) --(1,0)arc(0:60:1)--(.5, .866)--(0,0);
        \draw[dashed] (0,0) circle (1);
        \draw[dashed] (-1,0)--(1.5,0);
        \draw[dashed] (-.5, -.866)-- (.75, 1.299);
        
        \draw[purple, thick] (0,0)--(1,0);
        \draw[purple, thick] (0,0)--(.5,.866);
        \draw[->] (.2,0) arc (0:60:.2) node[anchor=west] {$\; r$};
        \draw[->] (.95,.25) node[anchor=north east] {$h$}to [out = 180, in =-120]  (.707,.707)  ;

        \draw[blue, thick] (1,0) arc (0:60:1);
        \fill (0,0) circle[radius=2pt];
        \fill (1,0) circle[radius=2pt];
        \fill (.866, .5) circle[radius=2pt];
        \fill (.5,.866) circle[radius=2pt];

         
         \draw (.897+4,.518) circle (.268);
      \draw[-] (-1.5+4,0) node[anchor = south west] {$\mathbb{R}$}--(1.5+4,0);
      \draw[-] (4,0) circle (1);
      \draw[-] (-.75+4, -.75*1.73)--(.75+4, .75*1.73) node[anchor = east]{$L$};
      \draw[->] (.2+4,0) arc (0:60:.2) node[anchor=west] {$\pi/g$};
      \node[left] at (-.25+4,1) {$\sS^1$};
      \node at (4,1) {\color{blue}$\circlearrowleft$} node [anchor= north west] at (4,1){\color{blue}$\sigma$};
      \node at (5.5,0) {\color{blue}$\circlearrowleft$} node [anchor= north west] at (5.5,0){\color{blue}$\sigma_1$};
      
      \node at (1.17+4,.5){\color{blue}$\circlearrowleft$} node [anchor= west] at (1.17+4,.5){\color{blue}$\sigma_3$};

      \node at (.75+4, .75*1.73){\color{blue}$\circlearrowleft$} node [anchor= west] at (.75+4, .75*1.73){\color{blue}$\sigma_2$};
        
    \end{tikzpicture}
    \caption{On the left is a fundamental domain for the action of the group $D_g = \langle r,h \rangle$. On the right, the transformations $f_j$ are formed by the reflections $\sigma, \sigma_1, \sigma_2, \sigma_3$ defined in \cref{sec:maintheorems}.}
    \label{fig:Dgsigmas}
\end{figure}

\begin{prop}\label{prop:dihedralaction}
With the notation as above, $ \langle r, h \rangle \simeq D_g $, the dihedral group of order $2g$, and also $D_g\subseteq \Aut(G)$. The signature of $D_g$ acting on $S$ is $(0;2,2,2,2,g)$.
\end{prop}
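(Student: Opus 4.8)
The plan is to verify the three assertions in turn: that $\langle r,h\rangle\cong D_g$, that conjugation by each of $r$ and $h$ permutes the generators $f_k$ (hence $D_g$ embeds in $\Aut(G)$), and finally that the quotient orbifold $S/D_g$ has the stated signature.

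\textbf{Step 1: The group $\langle r,h\rangle$ is dihedral of order $2g$.} Here $r(z)=e^{2\pi i/g}z$ has order $g$ and $h(z)=1/z$ has order $2$. A direct computation gives $h r h^{-1}(z) = h(r(1/z)) = h(e^{2\pi i/g}/z) = e^{-2\pi i/g} z = r^{-1}(z)$, which is exactly the dihedral relation. Since $r$ generates a cyclic group of order $g$ inside the (abelian) group of rotations about $0$ while $h$ is an inversion not of that form, $h\notin\langle r\rangle$, so $\langle r,h\rangle$ has order exactly $2g$ and is therefore $\cong D_g$. (Alternatively: $r=\sigma_2\sigma_1$ and $h=\sigma\sigma_1$ are products of reflections in three lines/circles through $0$ and $\infty$, so $\langle r,h\rangle = \langle \sigma,\sigma_1,\sigma_2\rangle$ modulo its orientation-preserving index-$2$ subgroup; the three reflecting curves meet pairwise at angles $\pi/g$, giving the dihedral group.)

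\textbf{Step 2: $D_g\subseteq\Aut(G)$.} One must show conjugation by $r$ and by $h$ sends $G$ to itself by permuting $\{f_0,\dots,f_{g-1}\}$ up to inversion. For $r$ this is immediate from the definition $f_k = r^k\sigma_1\sigma_3 r^{-k}$: conjugating by $r$ gives $r f_k r^{-1} = f_{k+1}$ with indices read mod $g$, using that $r^g = \mathrm{id}$. For $h$ one needs to identify $h f_k h^{-1}$. Writing $f_0 = \sigma_1\sigma_3$ and noting $h = \sigma\sigma_1$, compute $h f_0 h^{-1} = \sigma\sigma_1\cdot\sigma_1\sigma_3\cdot\sigma_1\sigma = \sigma\sigma_3\sigma_1\sigma$. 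Since $\sigma$ commutes with $\sigma_1$ (both fix $\sS^1$... actually $\sigma_1$ is reflection in $\R$, which is orthogonal to nothing relevant — one instead uses that $\sigma\sigma_1 = h$ has order $2$, i.e. $\sigma_1\sigma = \sigma\sigma_1$), and since $\sigma$ preserves the configuration of circles (the circles $\sC_i$ are built $\sS^1$-orthogonally and $\sC_1'=\sigma(\sC_1)$), $\sigma\sigma_3\sigma = \sigma_3'$ is reflection in $\sC_1'$. This should express $h f_0 h^{-1}$ as $f_0^{-1}$ (or a conjugate thereof by an element of $\langle r\rangle$), which is what is needed; the general $f_k$ then follows by combining with Step-1's relation $h r^k h^{-1} = r^{-k}$. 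I expect the bookkeeping of which reflection composite equals which $f_k^{\pm1}$ to be the main technical obstacle — one has to keep careful track of orientation and of the fact that $f_k$ pairs $\sC_k$ to $\sC_k'$, so $f_k^{-1}$ pairs $\sC_k'$ to $\sC_k$, and match this against the geometric action of $h$ and $r$ on the labelled circles.

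\textbf{Step 3: The signature.} Since $G$ acts freely on $\Omega(G)$ with $S = \Omega(G)/G$, and $D_g\subseteq\Aut(S)$ arises from a larger group $\widehat G = \langle G, r, h\rangle$ of M\"obius transformations with $S/D_g = \Omega(G)/\widehat G$, I would analyze $\widehat G$ via the Poincar\'e polyhedron theorem applied to the fundamental domain pictured in \cref{fig:Dgsigmas} (left): the triangle-like region bounded by the two segments of $L$ and $\R$ emanating from $0$ and the arc of $\sС_1$. The quotient $X/\langle r,h\rangle$ where $X$ is the disk/sphere is a sphere with cone points, and cutting down further by $G$'s side-pairings; one reads off the cone points from the fixed-point data of the elliptic elements in $\widehat G / G$. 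The rotation $r$ and its $D_g$-conjugates contribute a cone point of order $g$ (the orbit of $0$), while the three reflections $\sigma_1,\sigma_2,\sigma$ contribute, through products of pairs meeting the fundamental domain, the four cone points of order $2$ — these come from the vertices of the fundamental polygon where two reflecting curves meet at angle $\pi/2$ after the circle-pairing is imposed, namely the images of $1$, $e^{i\eta}$ (or $e^{i\theta}$), and the two further intersection points with $L$ and $\sС_1$. A Riemann–Hurwitz check then confirms $(0;2,2,2,2,g)$: with $|D_g| = 2g$ and quotient genus $g_0=0$, the formula $2g-2 = 2g\bigl(-2 + \sum(1-1/m_i)\bigr)$ becomes $2g-2 = 2g\bigl(-2 + 3\cdot\tfrac12 + (1-\tfrac1g) + \tfrac12\bigr) = 2g(1 - \tfrac1g) = 2g-2$, which is consistent, so the branching data is exactly $(2,2,2,2,g)$. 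The care needed is in justifying that there are no further branch points and that $g_0=0$, which follows because the fundamental domain for $D_g$ inside a fundamental domain for $G$ is simply connected and its $D_g$-translates tile a disk, so the quotient is a sphere.
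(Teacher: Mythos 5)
Your overall strategy coincides with the paper's: verify the dihedral relations, compute the conjugates $rf_kr^{-1}$ and $hf_kh^{-1}$ explicitly, and derive the signature from fixed-point orbits plus Riemann--Hurwitz. (Your Step 1 is in fact more elementary than the paper's, which invokes the Poincar\'e polyhedron theorem on the sector bounded by $\R$ and $L$; the direct verification of $hrh^{-1}=r^{-1}$ together with $|r|=g$, $|h|=2$ and $h\notin\langle r\rangle$ is perfectly adequate.) However, there are two concrete problems. In Step 2 your identification of $hf_0h^{-1}$ goes wrong: since $\sC_1$ is orthogonal to $\sS^1$, the inversion $\sigma$ fixes $\sC_1$ setwise, so $\sigma\sigma_3\sigma=\sigma_3$; it is not the reflection in $\sC_1'$ (that circle is $\sigma_1(\sC_1)$, not $\sigma(\sC_1)$). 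The correct computation is clean: $\sigma$ commutes with $\sigma_1$ and $\sigma_3$ because $\R$, $L$ and $\sC_1$ all meet $\sS^1$ orthogonally, whence $hf_0h^{-1}=\sigma\sigma_1\,\sigma_1\sigma_3\,\sigma_1\sigma=\sigma\sigma_3\sigma_1\sigma=\sigma_3\sigma_1=f_0^{-1}$, and then $hf_kh^{-1}=r^{-k}f_0^{-1}r^{k}=f_{g-k}^{-1}$ using $hr^kh^{-1}=r^{-k}$. You flag this as ``the main technical obstacle'' and leave it open; it must be carried out, and the route through a putative reflection in $\sC_1'$ would not close.

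In Step 3 the Riemann--Hurwitz identity is only a consistency check: it cannot by itself exclude other signatures with the same Euler characteristic, so it does not establish $(0;2,2,2,2,g)$. What is actually needed (and what the paper does) is an explicit enumeration of the $D_g$-orbits in $S$ with nontrivial stabilizer: the orbit of $\{0,\infty\}$, whose image has stabilizer of order $g$ generated by $r$, and four orbits of order-$2$ points, namely those of $1$, of $e^{i\pi/g}$, of $e^{i\theta}$, and of $e^{i\eta}$. The last two are the subtle ones: $e^{i\theta}$ is not fixed by $h$ in $\Omega(G)$, but $f_1(e^{i\theta})=\sigma_1\sigma_3(e^{i\theta})=e^{-i\theta}=h(e^{i\theta})$, so its image in $S=\Omega(G)/G$ is $h$-fixed; the same holds for $e^{i\eta}$. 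Your list of candidate cone points is too vague to certify that these five orbits are all of them and that each has the stated order; once that enumeration is in place, Riemann--Hurwitz then \emph{determines} $g_0=0$ rather than merely being consistent with it.
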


\begin{proof}
One checks readily that a fundamental domain for $\langle r, h \rangle$ is given by the sector of the unit disc cut out by $\R$ and $L$, in the positive quadrant. One views this as having four arcs as edges, paired by $r$ and $h$ as in \cref{fig:Dgsigmas}. With this we can apply the Poincar\'e polyhedron theorem (cf. \cite[\S IV.F.5]{maskit}) which implies that a
presentation for the group $\langle r,h \rangle $ has exactly the same relations as the dihedral group of order $2g$ :

Since $r$ and $h$ are M\"obius transformations, they will induce an automorphism on $S$ if conjugation by them preserves $G$. That is, we need to check that $ r f_k r^{-1} $ and $ h f_k h^{-1} $ lie in $G$ for all $k$. Indeed, we have $r f_k r^{-1} = r r^k \sigma_1 \sigma_3 r^{-k} r^{-1} = f_{k+1} \in G $, and
\begin{align*}
  h f_k h^{-1} &= h(r^k \sigma_1 \sigma_3 r^{-k}) h^{-1}\\
               &= \sigma \sigma_1 (\sigma_2 \sigma_1)^k \sigma_1 \sigma_3 (\sigma_1\sigma_2)^k \sigma_1 \sigma\\
               &= (\sigma \sigma) (\sigma_1 \sigma_2)^k (\sigma_1 \sigma_1) \sigma_3 \sigma_1 (\sigma_2 \sigma_1)^k\\
               &= g^{-k} \sigma_3\sigma_1 g^{k}\\
               &= \left(r^{g-k} \sigma_1\sigma_3 r^{-(g-k)}\right)^{-1}\\
               &= f_{g-k}^{-1} \in G.
\end{align*}
Hence the group $ \langle r,h \rangle = D_g$ is a subgroup of $\Aut(S)$. To compute its signature, notice that the orbits under the group action are the images under the quotient map $\Omega(G)\to S$ of the following points:
\begin{enumerate}
    \item $\{0,\infty\}$
    \item $e^{2k\pi i/g}$ for $k=1,\dots,g$
    \item $e^{(2k+1)k\pi i/2g}$ for $k=1,\dots,g$
    \item $e^{2ki\theta/2g}$ for $k=1,\dots,g$
    \item $e^{2ki\eta/2g}$ for $k=1,\dots,g$
\end{enumerate}
This implies the branch points of the quotient $S\to S/D_g$. To compute the genus {$g_0$} of $S/D_g$ we apply the Riemann--Hurwitz formula:
\[
g-1 = |D_g|(g_0-1 + \frac{1}{2}\sum_{i=1}^s(1-\frac{1}{m_i}) = 2g(g_0-1+\frac{1}{2}(5 - 2 - \frac{1}{g})=2gg_0+g-1.
\]
This implies that $g_0=0$.
\end{proof}

 We compute directly the action of the subgroup $D_g = \langle r,h\rangle \leq \Aut(S)$. 
 \begin{lem} The action of $r$ and $h$ satisfy $r^* \omega_j = \omega_{j+1}$ and $h^* \omega_j = -\omega_{g-j}$.
\end{lem}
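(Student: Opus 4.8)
The plan is to track how the conformal automorphisms $r$ and $h$ act on the specific basis of Poincaré theta series from \cref{prop:oneforms}. Recall that $\omega_j$ is associated to the generator $f_j$ (with $j$ taken modulo $g$), and is built from the fixed points $A_j, B_j$ of $f_j$ together with the coset representatives of $G/G_j$. The key structural fact, established in the proof of \cref{prop:dihedralaction}, is that conjugation permutes the generators: $r f_k r^{-1} = f_{k+1}$ and $h f_k h^{-1} = f_{g-k}^{-1}$. So the first step is to record how these conjugations move the fixed points: since $A_k, B_k$ are the attracting/repelling fixed points of $f_k$, and conjugating a Möbius transformation by $r$ sends its fixed points to their images under $r$, we get $r(A_k) = A_{k+1}$, $r(B_k) = B_{k+1}$, while $h$ sends $f_k$ to $f_{g-k}^{-1}$, which swaps the roles of attracting and repelling, so $h(A_k) = B_{g-k}$ and $h(B_k) = A_{g-k}$ (here one must be careful about the orientation-reversing nature of $h$ as an anti-conformal-looking map — but $h: z \mapsto 1/z$ is actually holomorphic, so this is fine; only $\sigma$ alone is anti-holomorphic).

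The second step is the actual pullback computation. For a Möbius map $\psi$ and the elementary form $\left(\frac{1}{z - b} - \frac{1}{z - a}\right)dz$, one has the classical identity that the pullback $\psi^*$ of such a "difference of simple poles" one-form is again of the same shape with $a, b$ replaced by $\psi^{-1}(a), \psi^{-1}(b)$, up to the cocycle/chain-rule factor $\psi'(z)$ which, because we are subtracting two terms with the same residue behavior, recombines cleanly — concretely $\frac{\psi'(z)}{\psi(z) - c} = \frac{1}{z - \psi^{-1}(c)} - \frac{1}{z - p}$ where $p$ is the pole of $\psi$ (the point mapping to $\infty$), and the pole at $p$ cancels in the difference. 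So $\psi^*\omega$, for $\omega$ the sum over the coset $G/G_n$, becomes the sum over $\psi^{-1}(G/G_n)\psi$ of the elementary forms with fixed points $\psi^{-1}(A_n), \psi^{-1}(B_n)$. When $\psi = r$ (or $r^{-1}$), conjugation carries $G_n$ to $G_{n\pm1}$ and the coset representatives correspondingly, and the fixed-point data shifts index by one, yielding $r^*\omega_j = \omega_{j+1}$ (with the correct sign, since $r$ preserves the attracting/repelling labeling). When $\psi = h$, the conjugation sends the coset system for index $n$ to that for index $g - n$, but now the attracting and repelling fixed points are interchanged, so the elementary form $\frac{1}{z - B} - \frac{1}{z - A}$ becomes $\frac{1}{z - A'} - \frac{1}{z - B'} = -\left(\frac{1}{z - B'} - \frac{1}{z - A'}\right)$ with $A', B'$ the fixed points of $f_{g-j}$, giving the sign $-1$: thus $h^*\omega_j = -\omega_{g-j}$.

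The main obstacle I expect is bookkeeping rather than conceptual: one must verify that under conjugation by $r$ (resp. $h$) the set of coset representatives $\{f_{i_1}^{j_1}\cdots f_{i_k}^{j_k} : i_\ell \neq n\}$ is carried bijectively onto the corresponding set for index $n+1$ (resp. $g - n$), so that the reindexed sum is genuinely $\omega_{j+1}$ (resp. $\omega_{g-j}$) and not merely a form in the same cohomology class — though since both are honest global holomorphic one-forms and the pullback of a basis element is determined pointwise, even a cohomological identification would suffice here because $H^0(S,\Omega_S)$ injects into $H^1$. A secondary subtlety is to confirm the convergence is preserved under the pullback, which is immediate since $\psi$ is a biholomorphism of $\Omega(G)$ and the Schottky criterion is conjugation-invariant. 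Finally, one should double-check the index conventions $j \bmod g$ are consistent with the labeling $f_k = r^k \sigma_1\sigma_3 r^{-k}$ fixed in \cref{sec:maintheorems}, so that "$\omega_{j+1}$" and "$\omega_{g-j}$" mean exactly what the statement claims.
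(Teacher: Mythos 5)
Your proposal is correct and is essentially the paper's own argument: the paper dismisses this lemma with ``follows by a direct computation,'' and what you have written out (the partial-fraction identity for pulling back $\frac{dz}{z-c}$ under a M\"obius map with the $\psi^{-1}(\infty)$ poles cancelling in the difference, the conjugation relations $r f_k r^{-1}=f_{k+1}$, $h f_k h^{-1}=f_{g-k}^{-1}$ permuting the fixed points and coset representatives, and the sign $-1$ from the attracting/repelling swap under $h$) is exactly that computation. The only point to nail down, as you yourself note, is the mod-$g$ index convention for whether $r^*$ shifts $j$ up or down by one.
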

\begin{proof}
    This follows by a direct computation.
\end{proof}
\begin{coro}\label{lem:-zmap}
    The map $z\mapsto -z$ acts nontrivially as a reciprocal under $\phi_S$ in genus 2 in the sense that $\tilde{\phi}_S(-z) \cdot \tilde{\phi}_S(z) = 1$.
\end{coro}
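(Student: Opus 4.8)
The plan is to use that in genus $2$ the rotation $r$ of \cref{prop:dihedralaction} coincides with the map $z\mapsto -z$, and then to combine this with the transformation law $r^*\omega_j=\omega_{j+1}$ from the preceding lemma. Indeed, for $g=2$ one has $r\colon z\mapsto e^{2\pi i/g}z = e^{\pi i}z = -z$. Since $r$ normalizes $G$ (by \cref{prop:dihedralaction}), it preserves $\Lambda(G)$ and hence $\Omega(G)$, so $z\mapsto -z$ is a genuine automorphism of $S$ and, in particular, $-z\in\Omega(G)$ whenever $z\in\Omega(G)$; thus both sides of the claimed identity are defined.

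Next I would unwind the indices. The previous lemma gives $r^*\omega_j = \omega_{j+1}$ with the subscript read modulo $g=2$, so $r^*$ interchanges $\omega_1$ and $\omega_2$. Writing $\omega_j = g_j(z)\,dz$ on the chart $\{\omega_1\neq 0\}$ (which $r$ preserves, again by the lemma), the pullback formula $r^*\!\bigl(g_j(w)\,dw\bigr) = -\,g_j(-z)\,dz$ turns the lemma into the pair of identities $g_2(z) = -g_1(-z)$ and $g_1(z) = -g_2(-z)$. Substituting these into the definition $\tilde{\phi}_S = g_2/g_1$ yields
\[
\tilde{\phi}_S(-z) \;=\; \frac{g_2(-z)}{g_1(-z)} \;=\; \frac{-g_1(z)}{-g_2(z)} \;=\; \frac{g_1(z)}{g_2(z)} \;=\; \frac{1}{\tilde{\phi}_S(z)},
\]
which is exactly $\tilde{\phi}_S(-z)\cdot\tilde{\phi}_S(z)=1$. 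In canonical coordinates this says that $z\mapsto -z$ descends through the $2:1$ map $\phi_S$ to the involution $w\mapsto 1/w$ of $\cp^1$, the asserted nontrivial reciprocal action.

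There is essentially no obstacle here: once the lemma on $r^*$ is granted, this is a one-line computation. The only points that require care are the sign contributed by $r'(z)=-1$ in the pullback (harmless, since it cancels in the ratio $g_2/g_1$), and the fact that the subscript arithmetic in $r^*\omega_j=\omega_{j+1}$ is modulo $g$ — it is precisely this "wrap-around" at $g=2$ that makes $r^*$ swap $\omega_1$ and $\omega_2$, hence producing the reciprocal rather than trivial behaviour. (For $g\geq 3$ the same mechanism instead cyclically permutes the homogeneous coordinates, which is why \cref{lem:-zmap} is stated only in genus $2$.)
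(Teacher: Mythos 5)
Your argument is correct and follows the same route as the paper, which simply observes that for $g=2$ the rotation $r$ is $z\mapsto -z$ and lets the preceding lemma $r^*\omega_j=\omega_{j+1}$ (indices mod $2$, so $\omega_1\leftrightarrow\omega_2$) do the work. You have merely written out the index wrap-around and the cancellation of the Jacobian factor $r'(z)=-1$ in the ratio $\omega_2/\omega_1$, details the paper leaves implicit.
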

\begin{proof}
   The rotation matrix acts by $z\mapsto -z$ in the case of genus $2$.
\end{proof}

We now show that $h$ is the hyperelliptic involution in the case of $g=2$.

\begin{proof}[Proof of \cref{thm:maing2} Weierstrass points]
    Notice that $h(z) =1/z$ has fixed points $1, -1$. Moreover $h(e^{i\psi}) = e^{-i\psi}$ which is identified with $e^{i\psi}$ under identification of circles in genus $2$ when $\psi = \theta, \eta, \pi-\theta, \pi-\eta$. We get $6$ fixed points, by \cite[III.7.9]{farkas_kra} this classifies the Weierstrass points of the hyperelliptic involution (See \cref{fig:g23}).

    We finish by justifying that we can take the coordinate chart of projective space where $\omega_1(z) \neq 0$, which holds since there are $2g-2 = 2$ zeroes of any holomorphic differential \cref{thm:canonicalmapisanembedding}. The zero of $\omega_1(z)$ at $z= \infty$ has order 2 since the denominator of each term contains a $z^2$ so $\omega_1(z)$ is nonzero in the fundamental domain.

    To see that the images of the Weierstrass points come in reciprocal pairs, notice $x_1 = -x_2$, $x_4 = -x_6$, and $x_3= - x_5$.  Combining this fact with \cref{lem:-zmap}, we conclude the images are indeed reciprocal values.
\end{proof}

\subsection{Non-Hyperelliptic proof}
It is left to prove \cref{thm:maing3}. 

\begin{lem}\label{lem:propertiesofG}
The group $G$ is a Fuchsian Schottky group of rank $g$. Moreover $G$ keeps invariant the unit circle and the fixed points of each $f_i$ all lie on a common circle in the following cycle configuration
\[
(A_1,B_1,A_2,B_2,\dots,A_g,B_g).
\]
Where the order to be taking is clockwise, see for example \cref{fig:g23}.
\end{lem}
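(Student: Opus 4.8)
The plan is to verify the hypotheses of \cref{prop:schottkycondition} for the generators $f_0,\dots,f_{g-1}$, and then separately track where the fixed points sit. First I would unwind the definition $f_k = r^k\sigma_1\sigma_3 r^{-k}$. Since $\sigma_1\sigma_3$ is a product of two reflections (across $\R$ and across $\sC_1$), and $\sC_1$ is orthogonal to the unit circle through $e^{i\eta},e^{i\theta}$, the composite $\sigma_1\sigma_3$ is a M\"obius transformation with positive real trace pairing a circle $\sC_1$ with its mirror image $\sC_1' = \sigma_1(\sC_1)$ (the conjugate circle). Because $\sigma_3$ fixes $\sC_1$ pointwise and $\sigma_1$ is the reflection across $\R$, $f_0$ maps the interior of $\sC_1$ to the exterior of $\sC_1'$, matching the orientation convention in \cref{prop:schottkycondition}. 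Applying $r^k$ (rotation by $2\pi k/g$) conjugates this to the statement that $f_k$ pairs $\sC_{k+1} := r^k(\sC_1)$ with $\sC_{k+1}' := r^k(\sC_1')$ with the same orientation convention; here the indices are read mod $g$, matching \cref{def:family}.

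Next I would check the disjointness hypothesis: the $2g$ circles $\sC_1,\dots,\sC_g,\sC_1',\dots,\sC_g'$ have pairwise disjoint closed interiors. This is where the hypothesis $0<\eta<\theta<\pi/g$ is used. The circle $\sC_1$ is orthogonal to $\sS^1$ and cuts out an arc of $\sS^1$ from $e^{i\eta}$ to $e^{i\theta}$; its conjugate $\sC_1'$ cuts out the arc from $e^{-i\theta}$ to $e^{-i\eta}$. These two arcs are disjoint (one above, one below $\R$), and their total angular width $2(\theta-\eta)$ plus the gaps is small enough that the $g$ rotated copies by multiples of $2\pi/g$ remain disjoint precisely because $\theta<\pi/g$ forces each arc-pair $[e^{i\eta},e^{i\theta}]\cup[e^{-i\theta},e^{-i\eta}]$ to lie strictly inside the sector of width $2\pi/g$. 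Since orthogonal circles to $\sS^1$ are determined by the arcs they subtend, disjoint arcs give disjoint circle-interiors. This gives conditions (1)--(4) of \cref{prop:schottkycondition}, so $G$ is a Fuchsian Schottky group of rank $g$; it is Fuchsian because every generator is built from reflections preserving $\sS^1$ (each $\sigma,\sigma_1,\sigma_2,\sigma_3$ either fixes $\sS^1$ or is orthogonal to it), hence $G$ preserves $\sS^1$ and the limit set lies on $\sS^1$.

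Finally I would locate the fixed points. Each $f_k$ is hyperbolic with positive real trace, so it has an attracting fixed point $A_{k+1}$ and a repelling one $B_{k+1}$, both on $\sS^1$ (being in the limit set). For $f_0 = \sigma_1\sigma_3$, the fixed points are exactly the two points of $\sS^1$ fixed by both reflections composed, which one computes to be the two intersection points of the ``axis'' circle — the unique circle orthogonal to both $\sC_1$ and its reflection, equivalently the circle through $0$ and $\infty$ bisecting the configuration — with $\sS^1$; by the orientation convention these are ordered with $A_1$ below and $B_1$ above $\R$ (or vice versa), and in any case $A_1,B_1$ are separated on $\sS^1$ by the arc $[e^{i\eta},e^{i\theta}]$ on one side and its conjugate on the other. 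Rotating by $r^k$ carries $(A_1,B_1)$ to $(A_{k+1},B_{k+1})$, advancing clockwise by $2\pi/g$ each time. Reading off the cyclic order on $\sS^1$, the points interleave as $(A_1,B_1,A_2,B_2,\dots,A_g,B_g)$ clockwise, since within each fundamental sector $A_j$ precedes $B_j$ and consecutive sectors are adjacent. The main obstacle is the bookkeeping in this last step: making the clockwise-ordering claim precise requires carefully pinning down which of the two $\sS^1$-intersection points of the axis of $f_0$ is attracting versus repelling (this depends on the sign convention in ``positive real trace'' and the direction of $r$), and then confirming that no $B_j$ and $A_{j+1}$ swap order across a sector boundary — which again comes down to the $\theta<\pi/g$ bound keeping each arc-pair strictly inside its sector.
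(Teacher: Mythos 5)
Your proposal is correct and follows essentially the same route as the paper, whose proof is just far terser (it asserts that the equally spaced disjoint circle pairs give a Schottky group, that products of reflections in circles and lines orthogonal to $\sS^1$ give a Fuchsian group preserving $\sS^1$, and that the cycle configuration follows ``by direct inspection''); you have simply filled in the details, namely the arc-disjointness argument from $0<\eta<\theta<\pi/g$, the interior-to-exterior pairing convention, and the sector-by-sector interleaving of the fixed points. One parenthetical slip worth noting: the axis of $f_0=\sigma_1\sigma_3$ is not a circle through $0$ and $\infty$ but the common orthogonal circle of $\R$ and $\sC_1$ (here the vertical line $x=\Re(z_1)$), though this does not affect your conclusion, since your actual argument --- that the fixed points lie on $\sS^1$, with the attracting one inside the disk of $\sC_1'$ and the repelling one inside the disk of $\sC_1$, hence one below and one above $\R$ within the same fundamental sector --- already pins down the interleaving.
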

\begin{proof}
The group $G$ pairs $g$ pairs of circles equally spaced about the unit circle, therefore it is Schottky. Since each $f_k$ is defined as a product of reflections in circles and lines orthogonal to the unit circle, this group is Fuchsian. Since  $\sigma$ commutes with all of $\sigma_1 $, $\sigma_2 $, and $\sigma_3 $, hence each $f_k$ (and therefore the entire group $G$) preserves the unit circle. Direct inspection shows the last part of the lemma.
\end{proof}

We will be using the notion of harmonic pairs, as introduced in \cite[Definition 1.1a]{Keen80}, which we recall here for completeness. For $x,y,u,v\in \C$ on a common line, we denote the cross-ratio as
$$(x,y; u,v) := \frac{u-x}{u-y}\cdot \frac{v-y}{v-x}.$$

\begin{defn} 
Let ${f}_i$ and ${f}_j$ be M\"obius transformations with fixed points $(A_i,B_i)$ and $(A_j,B_j)$ which are distinct. The unique pair of points $(u,v)$ in $\cp^1$ for which the cross-ratio
\[
(A_i,B_i;u,v) = (A_j,B_j;u,v) = -1
\]
is called the harmonic pair with respect to ${f}_i$ and ${f}_j$.
\end{defn}
\begin{defn}
We say a Schottky group $\Gamma$ admits a \emph{mutually harmonic set of generators} if there exists generators $f_1,\dots,f_g$ of $\Gamma$ and a pair of points $(u,v)$ which are harmonic with respect to $f_i$ and $f_j$ for every $i,j$. 
\end{defn}
 
\begin{lem}
\cite[Lemma 2]{Keen80}\label{lem:hyperellipticthenmutuallyharmonicgens} Every hyperelliptic Riemann surface is uniformized by a Schottky group with a mutually harmonic set of generators.
\end{lem}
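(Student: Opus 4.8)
The plan is to follow the geometric idea behind Keen's Lemma~2: the hyperelliptic involution can be realized as a Möbius involution normalizing a suitably chosen Schottky uniformization, and the two fixed points of that involution are then automatically the required mutually harmonic pair.

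First I would fix a hyperelliptic Riemann surface $S$ of genus $g$ with hyperelliptic involution $J$, so that $S/J\simeq\cp^1$ with the quotient branched over the $2g+2$ Weierstrass points. The crux is to produce a Schottky uniformization $S\simeq\Omega(\Gamma)/\Gamma$ in which $J$ lifts to an involution $\hat J\in\PSL(2,\C)$ with $\hat J\Gamma\hat J^{-1}=\Gamma$. This is the step that genuinely uses hyperellipticity: one builds $\Gamma$ from a system of $g$ defining loops on $S$ that is symmetric under $J$ — concretely, group the $2g+2$ branch points into $g+1$ pairs, pair up the corresponding branch cuts, and take the handles invariant under $J$ — so that $J$ extends to the associated handlebody and hence acts as a Möbius transformation on the Schottky sphere. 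Equivalently one invokes the classification of those automorphisms of $S$ that lift to a given Schottky cover, together with the fact that for an appropriate choice of cover the hyperelliptic involution always lifts. I expect this to be the main obstacle; it is precisely the content that \cite[Lemma~2]{Keen80} supplies, and everything downstream is formal.

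Next, granted such a $\hat J$, I would choose the Schottky generators compatibly with the symmetry. Conjugation by $\hat J$ is an involutive automorphism of the free group $\Gamma$, and the generators $f_1,\dots,f_g$ arising from the $J$-symmetric loop system can be modified — absorbing the words that appear in the conjugation relations by a change of free basis — so that $\hat J f_i\hat J^{-1}=f_i^{-1}$ for every $i$. In particular $\hat J$ interchanges the two fixed points $A_i$ and $B_i$ of each $f_i$: the transformation $\hat J f_i\hat J^{-1}$ has fixed points $\hat J(A_i),\hat J(B_i)$ and equals $f_i^{-1}$, whose fixed points are $\{A_i,B_i\}$ with attracting and repelling roles exchanged, forcing $\hat J(A_i)=B_i$ and $\hat J(B_i)=A_i$.

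Finally I would invoke the elementary projective fact that an order-two Möbius transformation interchanging two distinct points $A$ and $B$ has its own fixed-point pair $(u,v)$ in harmonic position with respect to $(A,B)$, i.e.\ $(A,B;u,v)=-1$ in the cross-ratio normalization used above. Indeed, in a coordinate in which $\hat J$ has fixed points $0$ and $\infty$ it becomes $z\mapsto -z$, the only pairs it swaps are $\{c,-c\}$, and a direct substitution gives $(c,-c;0,\infty)=-1$. Since $\hat J$ is a single transformation with a single fixed-point pair $(u,v)$ and it swaps $A_i\leftrightarrow B_i$ for all $i$, the pair $(u,v)$ is harmonic with respect to $f_i$ and $f_j$ simultaneously for every $i,j$. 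Hence $f_1,\dots,f_g$ is a mutually harmonic set of generators of $\Gamma$, which is the assertion.
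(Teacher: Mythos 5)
The paper does not prove this statement; it is quoted verbatim from \cite[Lemma 2]{Keen80}. Your reconstruction follows exactly Keen's line of argument — lift the hyperelliptic involution to a M\"obius involution $\hat J$ normalizing a suitably chosen Schottky uniformization, arrange $\hat J f_i \hat J^{-1}=f_i^{-1}$, and observe that the fixed-point pair of $\hat J$ is then harmonic with respect to every $(A_i,B_i)$ — and the final cross-ratio computation is correct. Two points deserve more care than you give them. First, the lift of $J$ to $\Omega(\Gamma)$ is a priori only a conformal automorphism of $\Omega$ with $\hat J^2\in\Gamma$; you need the standard facts that such an automorphism extends to an element of $\PSL(2,\C)$ (the limit set is removable) and that choosing $\hat J$ to fix a preimage of a Weierstrass point forces $\hat J^2=\mathrm{id}$, since $\Gamma$ acts freely. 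Second, the normalization $\hat J f_i\hat J^{-1}=f_i^{-1}$ cannot in general be achieved by an abstract ``change of free basis'' absorbing conjugating words — an involutive automorphism of a free group sending each generator to a conjugate of its inverse need not be diagonalizable in this sense; what actually makes it work is the geometric choice of a $J$-invariant cut system with basepoint at a fixed point of $J$, so that each generating loop is carried to its inverse as a based loop. With those two repairs your outline is a complete and faithful proof of the cited lemma.
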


\begin{lem}
If $g\geq 3$, then $S$ is not hyperelliptic. 
\end{lem}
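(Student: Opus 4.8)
The plan is to prove the contrapositive via \cref{lem:hyperellipticthenmutuallyharmonicgens}: if $S$ were hyperelliptic, then $G$ would have to admit a mutually harmonic set of generators, and I will show this is impossible once $g\geq 3$ because of the rigid geometric configuration of the fixed points established in \cref{lem:propertiesofG}. The key structural input is that all $2g$ fixed points $A_1,B_1,\dots,A_g,B_g$ lie on a common circle (here the unit circle $\sS^1$), arranged in the cyclic order $(A_1,B_1,A_2,B_2,\dots,A_g,B_g)$. I would first reduce to working on a single circle/line by a M\"obius change of coordinates, so cross-ratios can be computed as honest real cross-ratios of points on $\R\cup\{\infty\}$, and note that a mutually harmonic pair $(u,v)$ need not lie on that circle, so one must argue carefully about where $u,v$ can be.

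The heart of the argument is the following elementary fact about harmonic conjugates: for two disjoint pairs $(A_i,B_i)$ and $(A_j,B_j)$ on a circle, the pair $(u,v)$ with $(A_i,B_i;u,v)=(A_j,B_j;u,v)=-1$ is the pair of points simultaneously inverse with respect to the two "circles with diameter endpoints" $\{A_i,B_i\}$ and $\{A_j,B_j\}$ — equivalently $(u,v)$ is determined as the intersection of the circle orthogonal to $\sS^1$ through $A_i,B_i$ with the analogous circle through $A_j,B_j$ (or, in a normalization sending $A_i\mapsto 0, B_i\mapsto\infty$, $u,v=\pm\sqrt{A_jB_j}$). So a single pair $(u,v)$ being harmonic for all $g$ of the $f_k$ forces all $g$ circles $\{\sC_k\text{ orthogonal to }\sS^1\text{ through }A_k,B_k\}$ to pass through the common two points $u,v$. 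I would then show that for our explicit configuration — where the $k$-th pair is the rotation by $2\pi k/g$ of the first pair $(A_1,B_1)$, which are two points on an arc of $\sS^1$ strictly inside one angular sector of width $<2\pi/g$ — these $g$ orthogonal circles are pairwise distinct and genuinely "spread around", so they cannot share two common points when $g\geq 3$. Concretely: two distinct circles meet in at most two points; the rotation of order $g$ would have to permute the pair $\{u,v\}$, forcing $\{u,v\}$ to be $\{0,\infty\}$ (the fixed points of the rotation $r$), but $0,\infty$ are manifestly not harmonic with respect to $(A_1,B_1)$ since $A_1,B_1$ are two points on the unit circle in general position, not inverse through a circle centered appropriately — a direct computation of $(A_1,B_1;0,\infty)=B_1/A_1 = e^{i(\eta-\theta)}\neq -1$ for $0<\eta<\theta<\pi/g$ finishes it.

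I expect the main obstacle to be handling the case $g=2$ cleanly — i.e.\ making sure the argument genuinely uses $g\geq 3$ and does not accidentally "prove" the false statement that $S_{\eta,\theta}$ is non-hyperelliptic in genus $2$. The point is that for $g=2$ the rotation $r$ is $z\mapsto -z$ with fixed points $0,\infty$, but there are only two pairs, and one does \emph{not} need the common harmonic pair to be $r$-invariant; instead $(u,v)$ can be swapped or moved by $r$, and indeed a valid harmonic pair exists (consistent with $S$ being hyperelliptic). So the dichotomy is: for $g\geq 3$, invariance of $\{u,v\}$ under the order-$g$ rotation is forced (since $r$ must send a harmonic pair for the generating set to another harmonic pair for the rotated — but equal — set, and with $\geq 3$ circles through $u,v$ this pins the pair down), whereas for $g=2$ it is not. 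I would also need to double-check the lift-versus-quotient subtlety: the fixed points $A_k,B_k$ and the putative $u,v$ live in $\Omega(G)\subset\cp^1$, and hyperellipticity of $S=\Omega(G)/G$ via Keen's lemma is a statement about \emph{some} generating set, so I should phrase the contradiction as ruling out a mutually harmonic generating set for $G$ itself (up to the automorphisms of the free group), rather than only for the distinguished generators $f_1,\dots,f_g$; fortunately any other free generating set is obtained by a sequence of Nielsen moves, and the cyclic-order-on-a-circle configuration constrains these enough that the orthogonal-circles obstruction still applies.
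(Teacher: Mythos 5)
Your proposal follows the paper's own route --- Keen's harmonic-generators lemma combined with the cyclic configuration of the fixed points on the unit circle from \cref{lem:propertiesofG}, forcing the putative common harmonic pair to be $\{0,\infty\}$ and then contradicting harmonicity --- and in fact supplies the rotation-invariance dichotomy (order-$g$ rotation preserving a two-point set pins it to $\{0,\infty\}$ only when $g\geq 3$) and the final cross-ratio computation that the paper leaves implicit. One small slip: the fixed points $A_1,B_1$ of $f_1$ are not $e^{i\eta},e^{i\theta}$ (those are where $\sC_1$ meets the unit circle) but rather $e^{\mp i\psi}$ for some $\eta<\psi<\theta$, so the relevant cross-ratio is $e^{-2i\psi}$ rather than $e^{i(\eta-\theta)}$; since $0<2\psi<2\pi/g\leq 2\pi/3<\pi$ it is still $\neq -1$, and your conclusion stands.
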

\begin{proof}
Assume $S$ is hyperelliptic. By Lemma \ref{lem:hyperellipticthenmutuallyharmonicgens} there exists a Schottky group $\Gamma$ admitting a a mutually harmonic set of generators $\tilde{f}_1,\dots,\tilde{f}_g$ with respect to a pair of points $(u,v)$. Since $\Gamma$ and $G$ must be conjugate by some Mobius transformation $\phi$, therefore $\tilde{f}_i = \phi^{-1}\circ f_i \circ \phi$ for every $i=1,\dots,g$. 

In particular, if we denote by $(A_i,B_i)$ the fixed points of $f_i$, then the fixed points of $\tilde{f}_i$ are $(\phi^{-1}(p_i),\phi^{-1}(q_i))$. Since all of the fixed points of each $f_i$ lie in a common circle by Lemma \ref{lem:propertiesofG}, all the fixed points of $\tilde{f}_i$ will lie on a common circle. Notice that the only way for $\tilde{f}_1,\ldots,\tilde{f}_g$ to be a mutually harmonic set of generators implies that the pair of points must be $(u,v) = (0,\infty)$. The configuration of the fixed points given in Lemma \ref{lem:propertiesofG} cannot be mapped in that way. This gives the desired contradiction and shows that $S$ it is not hyperelliptic.
\end{proof}

\section{Explicit Computations}\label{sec:algebraiccurves}

In this Section we will first analyze the cases of complete intersections, corresponding to genus 3 and 4. We also write out how we compute explicitly the Mobius transforms and circles for the family $S_{\eta,\theta}$.

Let $S:= S_{\eta,\theta}$ be a Riemann surface of genus $g>2$ as given in \cref{def:family}, then \cref{thm:maing3} gives an embedding of $S$ in $\cp^{g-1}$ as an algebraic subvariety. Denote by $I_S\subseteq\C[x_0,\dots,x_{g-1}]$ the ideal of $\phi_S(S)$ and by $(I_S)_d$ its homogeneous component of degree $d$, this is a finite dimensional vector space. The group $D_g$ acts linearly on the space $H^0(S,\Omega_S)$ of holomorphic 1-forms. Hence induces an action in the space of monomials of degree $d$ in $x_0,\dots,x_{g-1}$ that preserve $(I_S)_d$.

\begin{lem}\label{lem:actionD_g}
    The group $D_g$ acts as follows:
\begin{enumerate}
  \item $r(z) = \exp(2\pi i/g) z $: $r^* \omega_j = \omega_{j+1}$, so we get a cyclic action of the canonical curve:
        \begin{displaymath}
          [x_0 : \cdots : x_{g-1}] \mapsto [x_1 : \cdots : x_{g-1} : x_0].
        \end{displaymath}
  \item $h(z) = 1/z$: $h^* \omega_j = -\omega_{g-j}$, so we get the action
        \begin{displaymath}
          [x_0 : x_1 : \cdots : x_{g-1}] \mapsto [x_0 : x_{g-1} : \cdots : x_1]
        \end{displaymath}
        (i.e. the first coordinate is fixed and the remaining $g-1$ coordinates are reversed; because we use homogeneous coordinates we can ignore the factor of $ -1 $ picked up in each coordinate).
\end{enumerate}
\end{lem}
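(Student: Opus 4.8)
The plan is to deduce the two displayed projective actions from the pullback identities $r^{*}\omega_{j}=\omega_{j+1}$ and $h^{*}\omega_{j}=-\omega_{g-j}$ on the Poincar\'e series of \cref{prop:oneforms}, reading indices modulo $g$ and relabeling the basis as $\omega_{0},\dots,\omega_{g-1}$ to match the labeling $f_{0},\dots,f_{g-1}$ of the generators used in \cref{sec:maintheorems}, so that $\phi_{S}(z)=[\omega_{0}(z):\cdots:\omega_{g-1}(z)]$ with $x_{i}\leftrightarrow\omega_{i}$. These identities are the ``direct computation'' mentioned after \cref{prop:dihedralaction}; I would carry it out using that the generators arise by conjugation, $f_{j}=r^{j}f_{0}r^{-j}$ (hence $A_{j}=r^{j}(A_{0})$, $B_{j}=r^{j}(B_{0})$), together with the relations $rf_{j}r^{-1}=f_{j+1}$ and $hf_{j}h^{-1}=f_{-j}^{-1}$ from the proof of \cref{prop:dihedralaction}. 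Since $\sigma$ and $\sigma_{1}$ fix $\sS^{1}$, both $r$ and $h$ preserve $\Omega(G)$, so the substitutions below stay in the region of absolute, locally uniform convergence of the series (the Schottky criterion recalled in \cref{sec:Schottky}), which legitimizes the term rearrangements.

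For the rotation: substituting $z\mapsto r^{-1}(z)=e^{-2\pi i/g}z$ into $\omega_{j}$, the Jacobian $e^{-2\pi i/g}$ is exactly the factor that lets one rewrite $\dfrac{e^{-2\pi i/g}}{e^{-2\pi i/g}z-a}=\dfrac{1}{z-r(a)}$, so the pulled-back series has the same shape as $\omega_{j}$ but with each $f(A_{j}),f(B_{j})$ replaced by $r(f(A_{j}))=(rfr^{-1})(A_{j+1})$ and $r(f(B_{j}))=(rfr^{-1})(B_{j+1})$. As $f$ runs over coset representatives for $G/G_{j}$ (with $G_{j}=\langle f_{j}\rangle$) the conjugate $rfr^{-1}$ runs over representatives for $G/G_{j+1}$, because conjugation by $r$ carries $G_{j}$ onto $G_{j+1}$; hence the pulled-back series is exactly $\omega_{j+1}$.

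For $h(z)=1/z$: substitute $z\mapsto 1/z$, with Jacobian $-1/z^{2}$, and use the partial-fraction identity $\dfrac{-1/z^{2}}{\,1/z-a\,}=-\dfrac{1}{z}+\dfrac{1}{\,z-1/a\,}$, valid for $a\neq 0$. Applied to the $B_{j}$- and $A_{j}$-summands of each term of $\omega_{j}$, the two spurious $-1/z$ pieces cancel \emph{within each term}; one must keep the $B_{j}$- and $A_{j}$-parts paired here, since the series $\sum(-1/z)$ diverges. This leaves $\sum_{f}\bigl(\tfrac{1}{z-h(f(B_{j}))}-\tfrac{1}{z-h(f(A_{j}))}\bigr)\,dz$. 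Now $h(f(B_{j}))=(hfh^{-1})(h(B_{j}))$, and combining $hr^{j}=r^{-j}h$ with $h(A_{0})=B_{0}$, $h(B_{0})=A_{0}$ --- the latter because $h$ conjugates $f_{0}$ to $f_{0}^{-1}$, and a M\"obius transformation doing so must interchange the attracting and repelling fixed points of $f_{0}$ --- gives $h(B_{j})=A_{g-j}$ and $h(A_{j})=B_{g-j}$. Finally $f\mapsto hfh^{-1}$ sends coset representatives for $G/G_{j}$ to ones for $G/G_{g-j}$; substituting and extracting the sign produced by the interchange $A\leftrightarrow B$ gives $h^{*}\omega_{j}=-\omega_{g-j}$.

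To conclude, let $\bar r,\bar h\colon S\to S$ be the automorphisms induced by $r,h$. A common nonvanishing holomorphic factor is invisible in $\cp^{g-1}$, so
\[
\phi_{S}(\bar r(z))=[(r^{*}\omega_{0})(z):\cdots:(r^{*}\omega_{g-1})(z)]=[\omega_{1}(z):\cdots:\omega_{g-1}(z):\omega_{0}(z)],
\]
whence $\bar r$ acts by $[x_{0}:\cdots:x_{g-1}]\mapsto[x_{1}:\cdots:x_{g-1}:x_{0}]$; similarly
\[
\phi_{S}(\bar h(z))=[-\omega_{0}(z):-\omega_{g-1}(z):\cdots:-\omega_{1}(z)]=[x_{0}:x_{g-1}:\cdots:x_{1}],
\]
the common $-1$ being irrelevant projectively. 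Since $D_{g}=\langle r,h\rangle$ and $\phi_{S}$ is an embedding by \cref{thm:maing3}, these two maps generate the $D_{g}$-action on $\cp^{g-1}$, and it preserves $I_{S}$ together with each graded piece $(I_{S})_{d}$. I expect the bookkeeping in the $h$-step to be the main obstacle: arranging the Jacobian/partial-fraction rearrangement so that the $-1/z$ terms cancel term-by-term rather than as two divergent sums, and then tracking at once the relabeling $h\colon A_{j}\leftrightarrow B_{g-j}$ of fixed points and the conjugation-induced reindexing of the cosets, so as to land on precisely $-\omega_{g-j}$ with the correct sign; the $r$-step runs on the same mechanism but is simpler, needing no partial fractions.
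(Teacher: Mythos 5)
Your proposal is correct and simply fills in, carefully, the ``direct computation'' that the paper's proof leaves implicit: the Jacobian/partial-fraction manipulation, the cancellation of the $-1/z$ pieces within each paired summand (necessary since they do not converge separately), the identification $h(A_j)=B_{g-j}$, $h(B_j)=A_{g-j}$ forced by $hf_jh^{-1}=f_{g-j}^{-1}$, and the reindexing of coset representatives under conjugation. The one point to watch is the pullback convention: your substitution $z\mapsto r^{-1}(z)$ computes the left action $(r^{-1})^{*}$ on forms, which is exactly what makes the index shift come out as $j\mapsto j+1$ rather than $j\mapsto j-1$, and this is the convention the lemma's statement implicitly uses.
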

\begin{proof}
    Follows by a direct computation.
\end{proof}

\subsection{Genus 3}\label{subsec:g3}
Let $S\subseteq\cp^2$ be a Riemann surface of genus $3$ from \cref{def:family}. \cref{lem:g34ideal} guarantees the ideal $I_S\subseteq \C[x_0,x_1,x_2]$ is generated by a unique polynomial $F(x_0,x_1,x_2)$ of degree 4. For every $g\in D_3$ we necessarily have that $g^*F = \lambda F$ for some non-zero constant. The orbits of the degree $4$ monomials under the action of $D_3$ are the following:
\begin{gather*}
    \{ x_0^4, x_1^4, x_2^4 \}, \{ x_0^3x_1, x_0^3 x_2, x_0 x_1^3, x_0 x_2^3,x_1^3x_2,x_1x_2^3 \}, \{ x_0^2 x_1^2, x_1^2x_2^2, x_1^2x_2^2\}, \{x_0^2x_1x_2,x_0x_1^2x_2,x_0x_1x_2^2\}
  \end{gather*}
Hence the quartic is of the form
\begin{align*}
    F(x_0,x_1,x_2) &= c_1(x_0^4 + x_1^4 + x_2^4)+c_2(x_0^3x_1 + x_0^3 x_2 +x_0x_1^3+ x_0 x_2^3 + x_1^3x_2+x_1x_2^3)\\&{}+ c_3(x_0^2 x_1^2 + x_1^2x_2^2+x_1^2x_2^2)+ c_4(x_0^2x_1x_2 + x_0x_1^2x_2+x_0x_1x_2^2).
  \end{align*}
  
\begin{rmk}
There exists a unique action up-to projective linear transformations of $D_4$ on a quartic curve in $\cp^2$, cf. \cite{KuribayashiKomiya79}. Hence the equations are common to all quartics with an action of $D_4$. Moreover, there is a change of coordinates that makes $c_2 = 0$. So this construction depends on 2 parameters. Due to the non-uniqueness of this transformation we prefer not to make this change of coordinates.
\end{rmk}

Specifically to our case are the real structure and their fixed points, this gives a way of making more precise the equation of such quartic by evaluating on very few points. 

Notice that the action of $h$ on $\cp^2$ has as fixed locus the line $x_1 = x_2$ together with the point $[0:1:-1]$. Intersection the line $x_1=x_2$ with the curve $S$ gives $4$ fixed points on the curve. Representatives of these points in $\Omega$ are given by $1,-1,\exp(\theta i)$, $\exp(\eta i)$. Hence we have
\begin{displaymath}
    \begin{array}{rr}
      \phi_S(1) = [1:u_1:u_1] & \phi_S(-1) = [1:u_2:u_2]\\
      \phi_S(e^{\theta i}) = [1:u_3:u_3] & \phi_S(e^{\eta i}) = [1:u_4:u_4]
    \end{array}
  \end{displaymath}
  \begin{equation*}
      \phi_S(1) = [1:u_1:u_1] \quad \phi_S(-1) = [1:u_2:u_2]\quad
      \phi_S(e^{\theta i}) = [1:u_3:u_3] \quad \phi_S(e^{\eta i}) = [1:u_4:u_4]
  \end{equation*}
We therefore obtain a system of equations in the variables $c_1,c_2,c_3,c_4$ by evaluating the points above on the equation $F$.

\subsection{Genus 4}\label{subsec:g4}
Let $S\subseteq\cp^3$ be a Riemann surface of genus $4$ as in \cref{def:family}. \cref{lem:g34ideal} says the ideal $I_S\subseteq \C[x_0,x_1,x_2,x_3]$ is generated by two polynomials $Q,T$ of degrees $2$ and $3$ respectively. The orbits of the degree two monomials under the action of $D_4$ are
  \begin{gather*}
    \{ x_0^2, x_1^2, x_2^2, x_3^2 \}, \{ x_0 x_1, x_1 x_2, x_2 x_3, x_3 x_0 \}, \{ x_0 x_2, x_1 x_3\}
  \end{gather*}
  hence the quadric $Q$ is of the form
  \begin{align*}
    Q(x_0,x_1,x_2,x_3) &= c_1(x_0^2 + x_1^2 + x_2^2 + x_3^2)\\&{}+c_2(x_0 x_1 + x_1 x_2 + x_2 x_3 + x_3 x_0)\\&{}+ c_3(x_0 x_2 + x_1 x_3).
  \end{align*}
  The orbits of the degree three monomials under the cyclic action $r^*$ in item (1) of \cref{lem:actionD_g} are
  \begin{gather*}
    \{ x_0^3, x_1^3, x_2^3, x_3^3 \},\\
    \{ x_0 x_1^2, x_1 x_2^2, x_2 x_3^2, x_3 x_0^2 \}, \{ x_0^2 x_1, x_1^2 x_2, x_2^2 x_3, x_3^2 x_0 \},\\
    \{ x_0^2 x_2, x_1^2 x_3, x_2^2 x_0, x_3^2 x^1 \},\\
    \{ x_0 x_1 x_2, x_1 x_2 x_3, x_2 x_3 x_0, x_3 x_0 x_1 \}.
  \end{gather*}
  The transformation $h^*$ in item (2) of \cref{lem:actionD_g} sends $ x_0 x_1^2 $ to $ x_3^2 x_0 $ and so the two orbits on the second line of the above
  display are combined into a single orbit; one sees that no other orbits combine (since $h^*$ exactly swaps $ x_1 $ and $ x_3 $ and fixes $ x_0 $ and $ x_2 $)
  so in total we have four orbits and the cubic polynomial $T$ is
  \begin{align*}
    T(x_0,x_1,x_2,x_3) &= d_1 (x_0^3 + x_1^3 + x_2^3 + x_3^3)\\
                         &+ d_2 (x_0 x_1^2 + x_1 x_2^2 + x_2 x_3^2 + x_3 x_0^2 + x_0^2 x_1 + x_1^2 x_2 + x_2^2 x_3 + x_3^2 x_0)\\
                         &+ d_3 (x_0^2 x_2 + x_1^2 x_3 + x_2^2 x_0 + x_3^2 x^1)\\
                         &+ d_4 (x_0 x_1 x_2 + x_1 x_2 x_3 + x_2 x_3 x_0 + x_3 x_0 x_1).
  \end{align*}
  This exhibits $ 2 + 3 = 5 $ parameters (generically we can divide through and assume $ c_3 = d_4 = 1 $) for the genus 4 curve; we expect $ 3g - 3 = 9 $ parameters for the
  generic genus 4 curve but of course this one is highly symmetric. From the symmetry in fact we should see only two parameters, the angles $ \theta $ and $ \eta $, with the
  parameters $ c_1 $, $ c_2 $, $ d_1 $, $ d_2 $, and $ d_3 $ functions of $ \theta $ and $ \eta $ via the uniformizing map.

  By the discussion above, we see that the fixed point locus of $h$ on $ \cp^3 $ is the plane $ x_1 = x_3 $ together with the two points $ [0:1:0:\pm 1] $.
  Intersecting the plane with the canonical curve (which is degree 6), we get six fixed points on the curve. These are all the fixed points of $h$, and the two points $ [0:1:0:\pm 1] $ do not lie on the canonical curve. The six fixed points are represented in $ \Omega $ by $1$, $-1$, $\exp(\theta i)$, $\exp(\eta i)$,
  $\exp(\pi i-\theta i)$, and $\exp(\pi i-\eta i)$. We therefore have
  \begin{displaymath}
    \begin{array}{rr}
      \phi_S(1) = [1:u_1:v_1:u_1] & \phi_S(-1) = [1:u_2:v_2:u_2]\\
      \phi_S(e^{\theta i}) = [1:u_3:v_3:u_3] & \phi_S(e^{\pi i-\theta i}) = [1:u_4:v_4:u_4]\\
      \phi_S(e^{\eta i}) = [1:u_5:v_5:u_5] & \phi_S(e^{\pi i-\eta i}) = [1:u_6:v_6:u_6]
    \end{array}
  \end{displaymath}
  where the twelve new variables $ u_k $ and $ v_k $ for $ 1 \leq k \leq 6 $ give six known points on the curve. One can now use algebra
  to fit the hypersurfaces $Q$ and $T$ onto these six known points (the system is highly over-determined): we obtain a system of seven equations
  \begin{multline*}
    0 = Q(1,u_1,v_1,u_1) = Q(1,u_2,v_2,u_2)= T(1,u_1,v_1,u_1) = T(1,u_2,v_2,u_2) = T(1,u_3,v_3,u_3)
  \end{multline*}
  in the five variables $ c_1 $, $ c_2 $, $ d_1 $, $ d_2 $, and $ d_3 $ which can be used to write down the canonical curve based only on evaluating the canonical map at five general points.

  \subsection{Explicit circles and mappings} \label{sec:circles+maps}

In this section we choose a fixed matrix to map between two isometric circles, and compute the fixed points. To map one isometric circle onto another circle, we are able to choose the amount of rotation. We will choose the amount of rotation so that the matrices have positive real trace, which will be in fact strictly larger than $2$. This can be seen by using the characterization of the action of M\"obius transformations on the complex plane up to conjugation by their fixed points, or equivalently their trace \cite[Chapter I]{maskit}. Specifically given two isometric circles we identify them with the following lemma.
\begin{lem}\label{lem:circtomatrix}
    Consider two disjoint circles in $\C$ centered at $z$ and $w$ with the same radius $r>0$, $C = S(z,r)$ and $C' = S(w,r)$. Suppose f is defined by
    $$ f= \begin{bmatrix} a & b \\ c & d\end{bmatrix}\quad \text{ for }\quad a = \frac{w}{r} e^{i\theta}\quad d = -\frac{z}{r} e^{i\theta} \quad c = \frac{1}{r} e^{i\theta} \quad b = \frac{ad - 1}{c} = -\frac{wz e^{2 i \theta}+r^2}{r e^{i\theta}} $$
    satisfying $f(C) = C'$ and $f(B(z,r)) = \C\setminus B(w,r)$. In order for the trace to be real and positive, we set
    $$\theta_0 = -\arg(w - z). $$

    The two fixed points are given by the attracting, respectively repelling fixed points $A$ and $B$ where
    $$A = \frac{a-d +\sqrt{(a-d)^2 + 4bc}}{2c}\quad B =  \frac{a-d - \sqrt{(a-d)^2 + 4bc}}{2c}.$$
\end{lem}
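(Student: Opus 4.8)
The plan is to recognise the circle $C$ as the \emph{isometric circle} of the M\"obius transformation $\zeta\mapsto f(\zeta)=\frac{a\zeta+b}{c\zeta+d}$; once this is done, every assertion of the lemma becomes a one-line consequence of the standard theory of isometric circles (as recalled, e.g., in \cite[Chapter~I]{maskit}). First I would record that the prescribed value $b=(ad-1)/c$ forces $\det f = ad-bc = ad-(ad-1) = 1$, so that $f$ lies in $\SL(2,\C)$ and $f^{-1}$ is represented by $\begin{bmatrix} d & -b\\ -c & a\end{bmatrix}$. Then, since $-d/c = z$ and $1/|c| = r/|e^{i\theta}| = r$, the isometric circle $\{\zeta:|c\zeta+d|=1\}$ of $f$ is exactly $C = S(z,r)$; symmetrically, $a/c = w$ shows that the isometric circle $\{\zeta':|{-c\zeta'+a}|=1\}$ of $f^{-1}$ is exactly $C' = S(w,r)$.

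Next I would exploit the elementary identity $-c\,f(\zeta)+a = \dfrac{1}{c\zeta+d}$, valid for all $\zeta$ because $\det f = 1$. Taking moduli immediately gives that $f$ carries the locus $|c\zeta+d|=1$ bijectively onto the locus $|{-c\zeta'+a}|=1$, i.e. $f(C)=C'$, and that it carries the open disc $B(z,r)=\{|c\zeta+d|<1\}$ onto $\{|{-c\zeta'+a}|>1\}$, the exterior of the disc bounded by $C'$. This is precisely the pairing property of the lemma, which made precise reads $f(B(z,r)) = \cp^1\setminus\overline{B(w,r)}$ (the displayed ``$\C\setminus B(w,r)$'' agrees with this away from the boundary circle and the point $\infty=f(z)$).

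For the normalisation of the rotation parameter I would simply compute $\operatorname{tr} f = a+d = \tfrac{w-z}{r}\,e^{i\theta}$; this is real and positive exactly when the factor $e^{i\theta}$ undoes the argument of $w-z$, i.e. when $\theta\equiv -\arg(w-z)$, and then $\operatorname{tr} f = |w-z|/r$. Because $C$ and $C'$ are disjoint circles of equal radius $r$ we have $|w-z|>2r$, so $\operatorname{tr} f>2$ and $f$ is hyperbolic. The fixed points are the roots of $c\zeta^2+(d-a)\zeta-b=0$, which the quadratic formula delivers as the stated $A$ and $B$, the discriminant being $(a-d)^2+4bc = (a+d)^2-4 = (\operatorname{tr} f)^2-4>0$. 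To check that $A$ is attracting and $B$ repelling I would take $\sqrt{(a-d)^2+4bc}=\sqrt{t^2-4}$ to be the positive real square root ($t:=\operatorname{tr} f$) and observe that $cA+d = \tfrac12\!\left(t+\sqrt{t^2-4}\right)$ and $cB+d = \tfrac12\!\left(t-\sqrt{t^2-4}\right)$ are exactly the eigenvalues $\lambda^{\pm1}$ of $f$, so $cA+d>1>cB+d>0$; since the multiplier of $f$ at a fixed point $\zeta_0$ is $f'(\zeta_0)=(c\zeta_0+d)^{-2}$, this yields $|f'(A)|<1$ and $|f'(B)|>1$, as required.

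I do not anticipate a genuine obstacle here: the whole argument is an assembly of standard facts about isometric circles and loxodromic elements of $\SL(2,\C)$. The two points that need a little care are purely bookkeeping: getting the interior/exterior (and the sphere point $\infty$) statement in the second step into the precise form above, and fixing the branch of the square root in the last step so that $A$ and $B$ emerge attracting and repelling in the order claimed.
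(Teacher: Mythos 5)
Your proof is correct, and on the circle-pairing step it takes a genuinely different (and cleaner) route than the paper. The paper verifies $f(C)=C'$ by brute force: it plugs a point $p$ with $|p-z|=r$ into the explicit fraction for $f(p)$ and simplifies $|f(p)-w|$ down to $r$, then observes $f(z)=\infty$ to conclude the interior of $C$ goes to the exterior of $C'$. You instead identify $C$ and $C'$ as the isometric circles of $f$ and $f^{-1}$ (via $-d/c=z$, $a/c=w$, $1/|c|=r$) and use the determinant identity $-c\,f(\zeta)+a=(c\zeta+d)^{-1}$, which gives both the circle pairing and the interior-to-exterior statement in one stroke; this buys conceptual clarity and generalizes immediately to any unimodular matrix, at the cost of invoking the isometric-circle formalism that the paper's computation avoids. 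The trace normalization and the quadratic-formula derivation of the fixed points are essentially identical in both arguments. You also go one step further than the paper: your multiplier computation $f'(\zeta_0)=(c\zeta_0+d)^{-2}$ with $cA+d=\tfrac12(t+\sqrt{t^2-4})>1>cB+d>0$ actually justifies the labels ``attracting'' and ``repelling'' for $A$ and $B$, a point the paper's proof asserts but does not check (it only notes that disjointness of the circles makes the discriminant nonzero). No gaps.
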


\begin{proof}[Proof of \cref{lem:circtomatrix}]
    We first show $f(C) = C'$. Let $p \in \C$ so $|p-z|= r$. We compute
    $$|f(p) - w| = \left|\frac{\frac{e^{i\theta}}{r} wp - \frac{wze^{2i\theta}+r^2}{re^{i\theta}}}{\frac{e^{i\theta}}{r} p - \frac{z}{r}e^{i\theta}} -w\right| = \left|\frac{e^{2i\theta} w(p - z) - r^2}{e^{2i\theta}(p - z)} - w\right| = r,$$
    which is sufficient since $f$ is invertible. Since $f$ preserves the circle, we only need to check one point in the interior. Notice that $f(z) = \infty$, so $f(B(z,r)) = \C\setminus B(w,r).$ The choice of $\theta_0$ guarantees the trace is given by 
    $$a+d = \frac{e^{i\theta_0}}{r}(w-z) = \frac{1}{r} |z-w| \in \R_{> 0}.$$ 

    Finally to compute the fixed points, the fixed points satisfy
    $$az + b = z(cz+d) \Leftrightarrow c z^2 + (d-a)z - b = 0,$$
    where the solution comes from the quadratic formula and the fact that circles are disjoint implies the discriminant is nonzero.
\end{proof}

\begin{prop}
    For the real curves $\sC_i = \sC(z_i,r)$ to $\sC_i' = \sC(w_i,r)$ given in \cref{def:family} we have the following formulas:
   $$z_1 = \exp\left(i\frac{\theta+\eta}{2}\right)\sec\left(\frac{\theta-\eta}{2}\right) \quad r = \tan\left(\frac{\theta-\eta}{2}\right),$$
   and for $j=2,\ldots, g$
   $$w_1 = \overline{z_1}\quad z_j = e^{\frac{2\pi i}{g} (j-1)}z_1 \quad w_j = e^{\frac{2\pi i}{g} (j-1)} w_1.$$
\end{prop}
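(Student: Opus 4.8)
The plan is to reduce the entire statement to the single circle $\sC_1=\sC(z_1,r)$: once its centre and radius are known, everything else follows by symmetry. Indeed, by \cref{def:family} the circle $\sC_1'$ is the conjugate of $\sC_1$, i.e.\ its reflection across $\R$, so its centre is $w_1=\overline{z_1}$ and its radius is again $r$; and for $j\ge 2$ the pair $(\sC_j,\sC_j')$ is obtained from $(\sC_1,\sC_1')$ by the rotation $z\mapsto e^{2\pi i(j-1)/g}z$ about the origin, which preserves radii and multiplies centres by $e^{2\pi i(j-1)/g}$, giving $z_j=e^{2\pi i(j-1)/g}z_1$ and $w_j=e^{2\pi i(j-1)/g}w_1$. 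So the only real task is to compute $z_1$ and $r$.

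To do this I would use the two defining properties of $\sC_1$ from \cref{def:family}. First, $\sC_1$ is orthogonal to the unit circle $\sS^1$, which is equivalent to the algebraic relation $|z_1|^2=1+r^2$ (the squared tangent length from $0$ to $\sC_1$ equals $1$). Second, $\sC_1$ passes through $e^{i\eta}$ and $e^{i\theta}$, two points of $\sS^1$; hence $z_1$ is equidistant from them and so lies on the perpendicular bisector of the chord $[e^{i\eta},e^{i\theta}]$. Since that chord is a chord of $\sS^1$, its perpendicular bisector passes through $0$ and through the midpoint of the corresponding arc, so it is the ray $\{\lambda e^{i(\theta+\eta)/2}:\lambda\in\R_{>0}\}$; thus $z_1=\lambda e^{i(\theta+\eta)/2}$ for some $\lambda>0$, and it remains only to find $\lambda$.

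For the final step I would impose $|z_1-e^{i\theta}|=r$ and combine it with $|z_1|^2=1+r^2$. Expanding, and using $\tfrac{\theta+\eta}{2}-\theta=-\tfrac{\theta-\eta}{2}$,
\[
|z_1-e^{i\theta}|^2=|z_1|^2-2\,\re\!\big(z_1e^{-i\theta}\big)+1=(1+r^2)-2\lambda\cos\!\Big(\tfrac{\theta-\eta}{2}\Big)+1.
\]
Setting this equal to $r^2$ forces $\lambda=\sec\!\big(\tfrac{\theta-\eta}{2}\big)$, whence $r^2=|z_1|^2-1=\lambda^2-1=\tan^2\!\big(\tfrac{\theta-\eta}{2}\big)$. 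The hypothesis $0<\eta<\theta<\pi/g$ with $g\ge 2$ gives $0<\tfrac{\theta-\eta}{2}<\tfrac{\pi}{4}$, so both $\lambda>0$ and $r=\tan\!\big(\tfrac{\theta-\eta}{2}\big)>0$ are the correct roots; this yields $z_1=\exp\!\big(i\tfrac{\theta+\eta}{2}\big)\sec\!\big(\tfrac{\theta-\eta}{2}\big)$ and $r=\tan\!\big(\tfrac{\theta-\eta}{2}\big)$, as claimed.

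I do not expect a serious obstacle here: the argument is elementary Euclidean geometry in $\C$. The only points that need care are choosing the correct ray for $z_1$ (which is forced once one notes that $e^{i\eta},e^{i\theta}$ lie on $\sS^1$) and the branch and sign choices making $\lambda$ and $r$ positive. If one also wishes to confirm that these circles genuinely define a Fuchsian Schottky group in the sense of \cref{prop:schottkycondition}, one should additionally verify that the $2g$ closed discs bounded by the $\sC_i,\sC_i'$ are pairwise disjoint; this is where the strict inequality $\theta<\pi/g$ is used, but it is separate from the displayed formulas and is already implicit in \cref{def:family}.
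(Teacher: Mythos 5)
Your proof is correct and follows essentially the same route as the paper: reduce everything to $\sC_1$, determine $z_1$ and $r$ from orthogonality to $\sS^1$ together with the two prescribed points, and then obtain the remaining circles by conjugation and rotation. The only cosmetic difference is that you locate the centre via the relation $|z_1|^2=1+r^2$ and the perpendicular bisector of the chord, whereas the paper intersects the tangent lines to $\sS^1$ at $e^{i\theta}$ and $e^{i\eta}$; both are equivalent elementary computations.
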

\begin{proof}
    We first notice that the four circles are all orthogonal to the unit circle, so reflection across the unit circle provides a real structure for the curve. 

    Next we will focus on constructing the center and radius of $\sC_1$. Recall $\sC_1$ is the circle orthogonal to the unit circle $\sS^1$ which goes through the points $e^{i\eta}$ and $e^{i\theta}.$ In order to find the Euclidean center, we take the intersection point of the tangent lines to the unit circle at $e^{i\theta}$ and $e^{i\eta}$, respectively.
    The tangent lines are given by finding the lines orthogonal to the radial line through $e^{i\theta}$ and $e^{i\eta}$
$$y- \sin(\eta) = -\frac{\cos(\eta)}{\sin(\eta)} (x- \cos(\eta)) \quad y- \sin(\theta) = -\frac{\cos(\theta)}{\sin(\theta)}(x- \cos(\theta)).$$
So the center of the circle is given by 
$$z_1 = \cos\left(\frac{\theta + \eta }{2}\right)\sec\left(\frac{\theta - \eta}{2}\right) + i \sin\left(\frac{\theta + \eta}{2}\right) \sec\left(\frac{\theta- \eta}{2}\right)$$
with radius found by taking the distance from $z_1$ to $e^{i\eta}$:
$$r_1 = \sqrt{(\cos(\eta) - \Re(z_1))^2 + (\sin(\eta) - \Im(z_1))^2} = \tan\left(\frac{\theta - \eta}{2}\right).$$
Since all the circles have the same radius, we have now found the correct radius for the six circles. The other centers are given by applying the correct complex conjugation to get $w_1$, and then rotating by $2\pi/g$. The fact that we get a Schottky group and thus a Riemann surface comes from \cref{prop:schottkycondition}.
\end{proof}
\section{Numerical Experiments} \label{sec:NumericalExperiments}
In this section we first share information about the algorithm and code used for making the experiments. We next include computations for genus 2 where we give approximate Weierstrass points and then in genus $3$ where we approximate the quartic defining the curve by looking at the real points.

\subsection{An algorithm for generating elements of the free group}  \Cref{alg:circ2schottkywords} inputs pairs of isometric circles and outputs the elements in the Schottky group up to level $L$. This algorithm minimizes the amount of matrix multiplication needed in order to store elements of the Schottky group. The key idea is when generating a word, we also add a ``tag'' which keeps track of what generator is on the right side of the word. This idea of keeping track of the tags was given in \cite[p.115]{indras_pearls} and this algorithm is the same but easier to read since it uses capabilities of modern programming languages. Keeping track of the tags is also the key idea to detecting elements of the coset representatives $G/G_n$, so the remaining implementation for evaluating the Poincare series at points on the Riemann surface is elementary. See \cite{mathrepo} for an implementation in Julia \cite{julia}, which is how the following computations were generated.
\begin{algorithm}[hbt]
\caption{From circles to words in Schottky groups}\label{alg:circ2schottkywords}
\begin{algorithmic}[1]
    \State Input $g$ centers $z_1,\ldots, z_g$
    \State Input $g$ centers $w_1,\ldots, w_g$
    \State Input $g$ radii $r_1,\ldots, r_g >0$
    \State Input length of words generated $L$.
    \For{$i=1,\ldots, g$}
        \State $f_i = \begin{bmatrix} \frac{w_i}{r_i} & - \frac{w_i z_i}{r_i} - r_i\\ \frac{1}{r_i} & \frac{-z_i}{r_i}
        \end{bmatrix}$ \Comment{as in \cref{lem:circtomatrix}}
    \EndFor
    \Procedure{MatrixWords}{$g, L$} \Comment{Construct words in Schottky group of length $\leq L$}
    \State $list = [(Id_2,0)]$ \Comment{$Id_2$ is $2\times 2$ identity matrix, tag is $0$}
    \For{$level = 1,\ldots, L$}
        \State $oldlist \gets list$ \Comment{$oldlist$ has all words length $<level$}
        \For{$w \in oldlist$}
            \If{$w[2] \neq -j$}{ add $(w[1]\cdot f_j,j)$ to $list$}
            \EndIf
            \If{$w[2] \neq j$}{ add $(w[1]\cdot f_j^{-1},-j)$ to $list$}
            \EndIf
        \EndFor
    \EndFor
    \State return [$w[1]$ for $w$ in $list$]
    \EndProcedure
\end{algorithmic}
\end{algorithm}
\subsection{Numerical Weierstrass Points Genus 2}\label{sec:g2numerical}
We consider $\eta = \frac{\pi}{12}$ and $\theta= \frac{\pi}{4}$ which were used to sketch \cref{fig:g23}. We estimate the equation of the curve is
$$y^2 = \prod_{j=1}^6 (x-\alpha_j)$$
where $\alpha_j \approx\tilde{\phi}_S(x_j)$ calculated for words of length $L\leq 12$. The values are approximated as follows, where the imaginary components are dropped as they all appear on the order of $10^{-11}$
\begin{align*}
    \alpha_1 &= -11.47448708745693& \alpha_3 &=   4.084600885787089
& \alpha_4 &= -11.787692546430492
  \\
   \alpha_2 &= -0.08714986494625039 &
   \alpha_5 &= 0.2448219613030484 &
 \alpha_6 &= -0.08483424521467196 
\end{align*}
We notice that the real structure can be seen here as all of the imaginary components of the roots are zero. We also verify the reciprocal values of the roots are given by
\[\alpha_1 \alpha_2 = 0.9999999999993655,
\;\alpha_3 \alpha_5 = 0.9999999999985639,\; \alpha_4 \alpha_6 = 0.9999999999990453. \]

Finally, we plot the curve for real values of $x$ and $y$ in \cref{fig:g2example}.
\begin{figure}[ht]
    \centering
    \includegraphics[width=.3\textwidth]{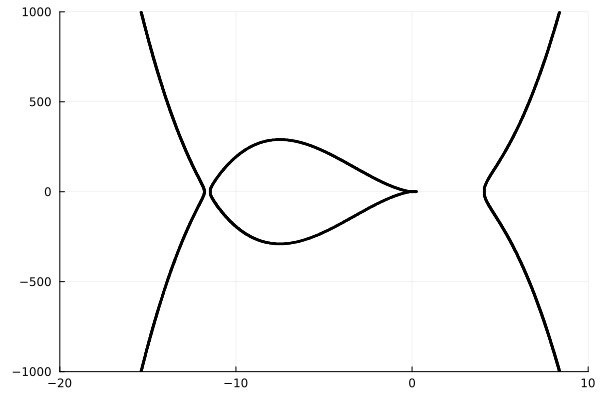}\hfill
    \includegraphics[width=.3\textwidth]{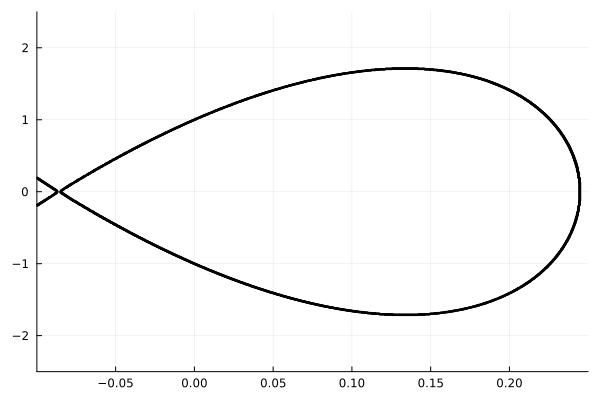}\hfill
    \includegraphics[width=.3\textwidth]{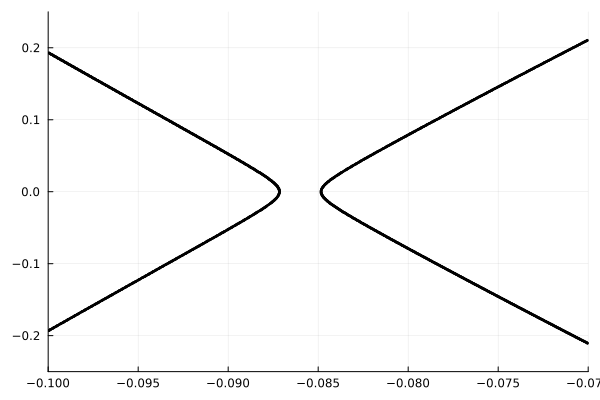}
    \caption{A representation of the real points of the genus 2 curve with $\eta = \frac{\pi}{12}$ and $\theta = \frac{\pi}{4}$ on the left. In the middle we zoom in near $x=0$, and on the right, we zoom further in near $x=-.09$ to see these curves create 3 separate smooth curves. }
    \label{fig:g2example}
\end{figure}

\subsection{Approximating the quartic in genus three}\label{sec:g3numerical}

To experiment with genus $3$, we decided to sample along the real points of the $M$-curve, and approximate the quartic to fit the real data. That is we did the following.
\begin{enumerate}
    \item Fix $0<\eta<\theta < \frac{\pi}{3}$ and choose $N, L \geq 1$.
    \item Sample $N$ points uniformly on the unit circle and compute the approximate image in $\cp^2$ by truncating the Poincar\'e theta series of \cref{prop:oneforms} to words of length at most $L$.
    \item Plot the real points in the plane by taking the chart in $\cp^2$ where the third component is $1$. We also truncate to the real parts since the imaginary parts are small. This is used to form the scatter plots in \cref{fig:plotg3_1} and \cref{fig:plotg3_2}.
    \item Now we add a contour plot of the level sets by plotting the real part of our approximate polynomial $p(z_1,z_2,z_3)$ at $p(x,y,1)$. To obtain the approximate polynomial, we solve a homogeneous least squares problems $Ax=0$ subject to the constraint $\norm{x} = 1$, where $x$ is a basis of monomials, and $A$ is the $N\times 15$ matrix formed by evaluating the monomials at the $N$ sample points.
\end{enumerate}

\begin{ex} \label{example:g3}
Consider $\eta = \pi/12$, $\theta = \pi/4$, $N= 50$ and $L=8$. We obtained the following equation, where the imaginary parts are on the order of $10^{-8}$ and we keep only the first 6 decimal places
\begin{equation*}
    \begin{split}
        p(z_1,z_2,z_3)& =-0.006341 z_1^4  -0.006347 z_2^4 - 0.0063523 z_3^4-0.039327 z_1^3 z_2 - 0.039291 z_1^3 z_3 \\
        &- 0.039302 z_1 z_2^3- 0.298144 z_1^2 z_3^2 - 0.039225 z_2 z_3^3 - 0.039441  z_1 z_3^3\\
        &-0.298246 z_1^2 z_2^2 - 0.298398 z_2^2 z_3^2  - 0.039336 z_2^3 z_3\\
        & +0.490895 z_1^2 z_2 z_3  + 0.491323 z_1 z_2^2 z_3 +0.491268 z_1z_2 z_3^2 .
    \end{split}
\end{equation*}

Notice that the Equation closely follows the symmetry of the curve expected from \cref{subsec:g3} by only sampling 50 points and truncating the series after words of length $8$. In \cref{sec:finalconverge} we discuss potential options and difficulties for obtaining better estimates.

See also \cref{fig:plotg3_1} for a plot of the sampled points and a contour plot of the approximate polynomial.

\begin{figure}[htb]
\includegraphics[width=.45\textwidth]{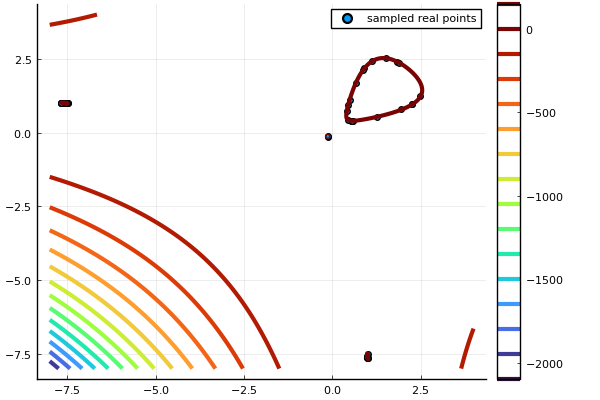}
\includegraphics[width = .45
\textwidth]{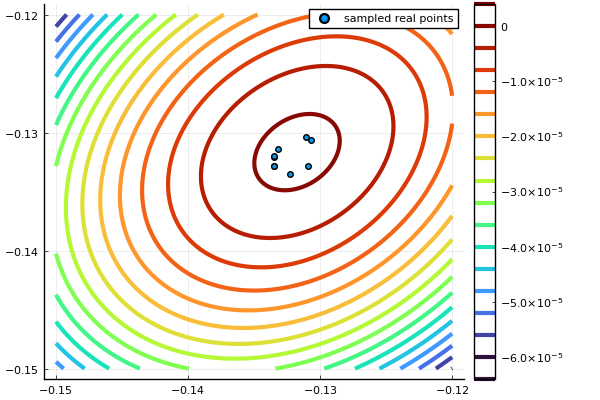}
    \caption{From \cref{example:g3} we plot uniformly sampled real points in blue alongside a contour plot of the approximate polynomial. Notice that there is red at each of the 3 dots, as well as the one loop we can see which which correspond to the 4 ovals making this an $M$-curve. On the right we zoom in very close to $(0,0)$, where the quartic in fact detects this very small oval as well.}
    \label{fig:plotg3_1}
\end{figure}
\end{ex}

We remark that (3) could generate issues as the Poincare theta series do not converge at the fixed points $A_n,B_n$ which live on the unit circle. Naturally we can first just hope this doesn't happen since they form a set of measure zero when sampling on the unit circle. Alternatively to guarantee convergence, on could sample uniformly on each of the intervals of the unit circle which are exterior to the circles. By doing this it is also nice to see from \cref{fig:g23} that the small intervals between $\sC_j$ and $\sC_j'$ form the smaller 3 circles in \cref{fig:plotg3_1}, whereas the three intervals between $\sC_1, \sC_2'$ and $\sC_2, \sC_3'$ and $\sC_3, \sC_1'$ combine to form the one larger oval.

Next we remark that in (4) the least squares solution to the homogeneous solution of equations (see \cite{HomoLS}) requires taking the eigenvector associated to the smallest eigenvalue of $A^T A$ up to a normalization. However in our experiments our smallest eigenvalue was numerically 0, so instead we could just project onto the null space by computing $(I_{15} - A^+ A) \vec{1}$ where $A^+$ is the Moore--Penrose pseudoinverse, and $\vec{1}$ is the vector of all $1$s. Note $I_{15}-A^+ A$ is the orthogonal projection onto the null space, so the choice of $\vec{1}$ is arbitrary and we can just normalize the vector to norm $1$.

\begin{figure}[hbt]\includegraphics[width=.45\textwidth]{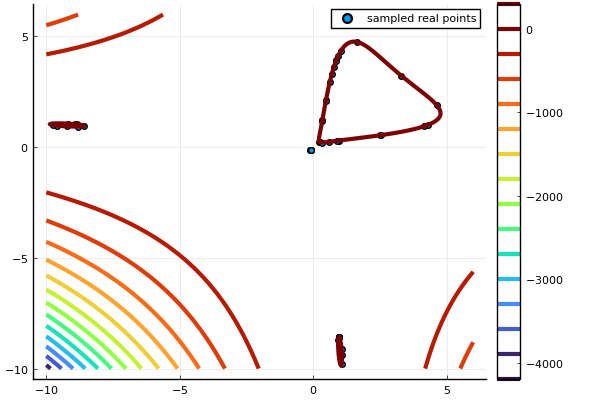}
\includegraphics[width = .45
\textwidth]{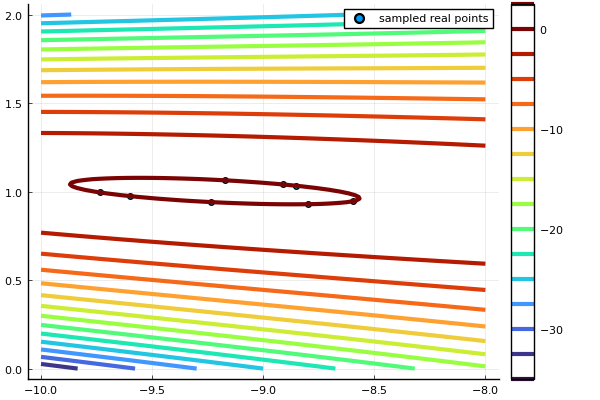}
    \caption{In this case we consider $\eta = \pi/9$ and $\theta = 2\pi/9$ $N=50$ and $L=8$, in order to see a more symmetric situation in the real ovals. The blue points are sampled uniformly along the unit circle, and the contour map overlaid represents the polynomial approximation. On the right image we zoom into the far left oval to see the approximation polynomial matches very well to the given real points.}
     \label{fig:plotg3_2}
\end{figure}

Notice that we need $N\geq 15$ in order to have enough sample points on the curve, but we chose $N=50$ since it was small enough to regularly show all four ovals while still leaving very short computational times of at most 1 minute. We expect we would obtain much more accurate information using more refined methods of learning varieties from curves \cite{BKSW2018}. The remarkable part here is that our simple method with a low level of accuracy allows us to find a quartic that can see all 4 ovals.

\subsection{Computing the Riemann Matrix}\label{sec:numericalRiemann}

In this section, we want to use the following theorem to approximate the Riemann Matrix for $g=3$. To do this, for each $n =1,\ldots, g$, define $\mu_n$ so that
$$\frac{f_n(z) - A_n}{f_n(z) - B_n} =\mu_n  \frac{z-A_n}{z-B_n}, \quad |\mu_n| < 1,$$
in which case $A_n$ is the attracting and $B_n$ the repelling fixed point.
Namely if $f$ is our fixed transformation from \cref{lem:circtomatrix}, the associated value of $\mu$ is given by
$$\mu =  \frac{a-Ac}{a-Bc}$$

We fix a basis of homology $a_j,b_j$ by setting $a_j$ to follow the boundary of $\sC_j$ counterclockwise, and $b_j$ to be the closed loop connecting $\sC_j$ to $\sC_j'$.

\begin{thm}\cite[Theorem 5.1]{Algebro_geometric}
    The Riemann matrix normalized to this basis of differentials is given by 
    \begin{equation}\label{eq:Rmatrix}
    R_{nm} = \frac{1}{2\pi i}\begin{cases}\sum_{f \in G_m\backslash G/G_n} \log\{A_m,B_m, f A_n, fB_n\} &m\neq n\\
    \log \mu_n + \sum_{f\in G_m\backslash G/G_n, f\neq I} \log\{A_m,B_m, f A_n, fB_n\} &m=n
    \end{cases}\end{equation}
    where the curly brackets are the cross-ratio
    $$\{z_1,z_2,z_3,z_4\} = (z_1-z_3)(z_2-z_4)(z_1-z_4)^{-1}(z_2-z_3)^{-1}.$$
\end{thm}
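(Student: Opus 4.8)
The plan is to evaluate the $a$- and $b$-periods of the Poincaré series $\omega_n$ from \cref{prop:oneforms} directly: the residue theorem for the $a$-periods and a telescoping identity for the $b$-periods. Throughout, the absolute and locally uniform convergence of the series, guaranteed by the Schottky criterion recalled in \cref{sec:Schottky}, justifies all the term-by-term integrations and regroupings below.

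\emph{$a$-periods.} Realize the cycle $a_m$ as the circle $\sC_m$, bounding a closed disk $D_m$, and integrate $\omega_n=\sum_{f\in G/G_n}\bigl(\tfrac1{z-f(B_n)}-\tfrac1{z-f(A_n)}\bigr)\,dz$ term by term. A ping-pong argument shows $A_n$ lies in the interior of $\sC_n'$ and $B_n$ in the interior of $\sC_n$, and that for every nontrivial coset representative $f$ the points $f(A_n)$ and $f(B_n)$ lie in the same one of the $2g$ disks bounded by $\sC_1,\sC_1',\dots,\sC_g,\sC_g'$; such a term therefore contributes residues $+1$ and $-1$ to $\oint_{a_m}$ and cancels. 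The identity term contributes $2\pi i$ exactly when its sole enclosed pole $B_n$ lies in $D_m$, i.e. when $m=n$. Hence $\oint_{a_m}\omega_n=2\pi i\,\delta_{mn}$, which is the normalization making $\tfrac1{2\pi i}\oint_{b_m}\omega_n$ the Riemann matrix of the basis $\omega_1,\dots,\omega_g$.

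\emph{$b$-periods.} Let $\Phi_n(z)=\sum_{f\in G/G_n}\log\frac{z-f(B_n)}{z-f(A_n)}$ be a local primitive of $\omega_n$. Since $\omega_n$ descends to $S$ we have $f_m^{*}\omega_n=\omega_n$, so $\Phi_n(f_m z)-\Phi_n(z)$ has vanishing derivative in $z$ and is therefore a constant, equal to the period $\oint_{b_m}\omega_n$:
\[
\oint_{b_m}\omega_n=\sum_{f\in G/G_n}\log\frac{(f_m z-f(B_n))(z-f(A_n))}{(f_m z-f(A_n))(z-f(B_n))}.
\]
Regroup this sum over $G/G_n$ into orbits of left multiplication by $\langle f_m\rangle$, i.e. into double cosets $G_m\backslash G/G_n$. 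Because $G$ is free and $f_m$ is not conjugate into $\langle f_n\rangle$, every such orbit is a free $\Z$-orbit $\{f_m^{k}g\}_{k\in\Z}$, with the single exception, when $m=n$, of the trivial double coset $G_n$ (which is one left coset, not a $\Z$-orbit). Conjugating $f_m$ to $w\mapsto\mu_m w$ by $\phi(w)=\tfrac{w-A_m}{w-B_m}$ and putting $u=\phi(z)$, $\alpha=\phi(gA_n)$, $\beta=\phi(gB_n)$, one rewrites the $k$-th term of an orbit as $b_k-b_{k-1}$ with $b_k=\log\frac{u-\mu_m^{k}\alpha}{u-\mu_m^{k}\beta}$; summing over $k\in\Z$ telescopes to $\lim_{k\to+\infty}b_k-\lim_{k\to-\infty}b_k$, which (with $u$ arbitrary) is $\log\{A_m,B_m,gA_n,gB_n\}$ up to orientation and branch. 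Summing over the double cosets gives the $m\neq n$ formula. For $m=n$ the trivial double coset cannot be telescoped; its contribution is the identity term $\log\frac{(f_nz-B_n)(z-A_n)}{(f_nz-A_n)(z-B_n)}=\log\mu_n$, read off from $\tfrac{f_nz-A_n}{f_nz-B_n}=\mu_n\tfrac{z-A_n}{z-B_n}$, and the remaining free double cosets telescope as before. Convergence of the final double-coset series is automatic, being a regrouping of the absolutely convergent series for $\omega_n$.

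\emph{Main obstacle.} The work is all in the bookkeeping. One must (i) prove the ping-pong claim placing $f(A_n)$ and $f(B_n)$ in a common disk, which is exactly what makes the $a$-period collapse to $\delta_{mn}$; (ii) verify that for $m\neq n$ \emph{no} $\Z$-orbit degenerates, the point where freeness of $G$ and the non-conjugacy of $f_m$ into $\langle f_n\rangle$ are used and where the $m=n$ case genuinely differs; and (iii) fix the orientation of each $b_m$, the branch of $\log$, and the attracting-versus-repelling labelling of $A_n,B_n$ consistently, so that the $+\log\mu_n$ and the exact cross-ratio $\{A_m,B_m,fA_n,fB_n\}$ emerge with the stated signs — a single sign error propagates through the whole matrix.
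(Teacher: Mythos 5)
The paper does not prove this statement at all --- it is imported verbatim as \cite[Theorem 5.1]{Algebro\_geometric}, so there is no in-paper argument to compare against. Your proposal reconstructs the classical Schottky--Burnside computation of the period matrix, and it is correct in outline: the ping-pong step does place $f(A_n)$ and $f(B_n)$ in a common disk for every nontrivial coset representative (the last letter of $f$ is not $f_n^{\pm1}$, so both fixed points lie outside its source disk and the whole word pushes them into the disk of its first letter), which gives $\oint_{a_m}\omega_n=2\pi i\,\delta_{mn}$; the freeness argument for the $G_m$-orbits on $G/G_n$ is exactly where $m=n$ versus $m\neq n$ diverges, since in a free group $\langle f_m\rangle\cap g\langle f_n\rangle g^{-1}$ is nontrivial only when $m=n$ and $g\in\langle f_n\rangle$; and the telescoping over each $\Z$-orbit is legitimate because $|\mu_m|<1$ makes both one-sided limits exist and independent of $u$. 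The one point worth pinning down, which you flag but do not resolve: with your primitive $\Phi_n(z)=\sum\log\frac{z-f(B_n)}{z-f(A_n)}$ and the period taken as $\Phi_n(f_mz)-\Phi_n(z)$, the identity term for $m=n$ evaluates to $\log(1/\mu_n)=-\log\mu_n$, and the telescoped orbit sum comes out as $\log\{A_m,B_m,gA_n,gB_n\}^{-1}$; both signs flip together, so your formula is $-R_{nm}$ as stated, and the discrepancy is removed by orienting $b_m$ from $\sC_m'$ to $\sC_m$ (equivalently, taking the period along $f_m^{-1}$). Since $\frac{1}{2\pi i}\log\mu_n$ with $|\mu_n|<1$ is what makes $\mathrm{Im}(R)$ positive definite, the sign in the theorem is forced, and you should state the $b$-cycle orientation that produces it rather than leaving it as a bookkeeping caveat. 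With that fixed, the argument is complete modulo the convergence hypotheses you correctly defer to the Schottky criterion.
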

We refer the reader to \cite{Schmies2011} for further discussion on computing rates of convergence. For our purposes we wanted to see the fact that we indeed have $M$-curves by having a purely imaginary Riemann matrix. Indeed for $\eta = \pi/12$ and $\theta = \pi/4$ generating words of length at most $10$, we approximated the Riemann matrix of the genus 3 curve by
$$i\begin{bmatrix}
    0.39980895499614727 & -0.004497125148067202 &-0.004497125148229181\\
    -0.004497125148067202 &0.3998089549956627 &-0.004497125149408407\\
    -0.004497125148229181 &-0.004497125149408407& 0.3998089549963685
\end{bmatrix},$$
where the real part was on the order of $10^{-12}$. To obtain the decimal point accuracy for this computation we refer the reader to \cref{sec:finalconverge} and \cite[Lemma 4.3.3]{Schmies2011}.

\section{Final discussion} \label{sec:finaldiscussion}

In this paper we have focused on the description of algebraic curves for small genus. Our framework however is more general and works in any genus. In particular \cite{BKSW2018} provides a source for general strategies to recover algebraic curves from point sets. We also conclude with a few words on quantifying convergence, other families of curves, and moduli.


\subsection{Quantifying convergence} \label{sec:finalconverge} In order to obtain estimates on numerical accuracy we know of two methods: one away from compact sets and the other for different truncation options.

In the first case, we quantify the remainder by considering the summation of the Poincar\'e theta series truncated by word length of $f$ with $|f|$. The next Lemma is essentially proven in \cite[Lemma~6.1]{Bogatyrev}, which only deals with hyperelliptic curves, but in fact works for any Fuchsian Schottky groups by applying the transformation $T$ mapping the unit circle to the real line.

\begin{lem}(Bogatyrev)
    We have
    $$\sum_{|f|>k\, f\in G/G_n} \left|\frac{1}{z-f(B_n)} - \frac{1}{z-f(A_n)}\right| \leq \left[\frac{1}{\mathrm{dist}(\mathbf{K}, \Lambda(G))^2} + o(1)\right](\sqrt{\mathbf{q}}+1) \left(\frac{\sqrt{\mathbf{q}}-1}{\sqrt{\mathbf{q}}+1}\right)^k $$
    where $z\in \mathbf{K}$ is a compact subset of $\Omega(G)$ and $o(1)\to 0$ as $k\to\infty$ and $\mathbf{q}$ is a function of $\theta$ and $\eta$ and $g$, and can be made explicit in terms of cross-ratios of specific points on the circle.
\end{lem}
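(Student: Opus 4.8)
The plan is to reduce the statement to the hyperelliptic case already handled by Bogatyrev in \cite[Lemma~6.1]{Bogatyrev}, by transporting everything via the Möbius transformation $T$ that carries the unit circle to the real line. First I would fix a transformation $T\in\PSL(2,\C)$ with $T(\sS^1)=\R\cup\{\infty\}$; a convenient choice is $T(z) = \tfrac{2i(z+1)}{-z+1}$, which sends $1\mapsto\infty$, so that under $T$ the group $G$ is conjugated to a Schottky group $\tilde G = TGT^{-1}$ pairing circles with disjoint interiors all centered on the real axis (this uses \cref{lem:propertiesofG}, which guarantees $G$ is Fuchsian with all isometric circles orthogonal to $\sS^1$). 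The fixed points $A_n,B_n$ of $f_n$ map to the fixed points $\tilde A_n = T(A_n)$, $\tilde B_n = T(B_n)$ of $\tilde f_n = Tf_nT^{-1}$, and the coset structure $G/G_n$ is preserved verbatim since conjugation is a group isomorphism. The compact set $\mathbf{K}\subset\Omega(G)$ maps to a compact set $\tilde{\mathbf K}\subset\Omega(\tilde G)$ avoiding the limit set $\Lambda(\tilde G)\subset\R$.

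Next I would rewrite each summand under $T$. For a Möbius transformation one has the identity
\[
\frac{1}{w-T(B_n)} - \frac{1}{w-T(A_n)} = T'(z)\left(\frac{1}{z-B_n}-\frac{1}{z-A_n}\right)\Big/\big(\text{harmless factors}\big),
\]
more precisely the differential $\big(\tfrac{1}{z-B_n}-\tfrac{1}{z-A_n}\big)dz$ is $T$-equivariant up to the usual cocycle, so the truncated tail sum over $\{|f|>k\}$ in the $z$-coordinate is comparable, with constants depending only on $\mathbf K$ and $T$, to the corresponding tail sum over $\{|\tilde f|>k\}$ in the real-line coordinate. Word length $|f|$ in $G$ equals word length $|\tilde f|$ in $\tilde G$ under the obvious identification of generators, so the truncation index $k$ is unchanged. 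Now I would apply Bogatyrev's estimate \cite[Lemma~6.1]{Bogatyrev} directly to $\tilde G$ on the real line: that lemma gives exactly the bound
\[
\left[\frac{1}{\operatorname{dist}(\tilde{\mathbf K},\Lambda(\tilde G))^2}+o(1)\right](\sqrt{\mathbf q}+1)\left(\frac{\sqrt{\mathbf q}-1}{\sqrt{\mathbf q}+1}\right)^k,
\]
where $\mathbf q$ is the maximum over the circles of the cross-ratio of a pair of boundary points with the two nearest circle endpoints on $\R$. Since cross-ratios are Möbius-invariant, this $\mathbf q$ is the same quantity computed with the original points on $\sS^1$, giving the function of $\theta,\eta,g$ in the statement; and $\operatorname{dist}(\tilde{\mathbf K},\Lambda(\tilde G))$ differs from $\operatorname{dist}(\mathbf K,\Lambda(G))$ only by a bounded conformal factor on the compact set, which can be absorbed into the $o(1)$ or the leading constant. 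Finally I would unwind the conjugation to conclude the bound in the original coordinate.

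The main obstacle I anticipate is purely bookkeeping: tracking the conformal distortion factor $|T'|$ across the inequality so that it genuinely only affects the constant $\operatorname{dist}(\mathbf K,\Lambda(G))^{-2}$ and the $o(1)$ term, rather than the geometric ratio $(\sqrt{\mathbf q}-1)/(\sqrt{\mathbf q}+1)$ — the latter must be shown to be a Möbius invariant (which it is, being built from cross-ratios), so that no spurious dependence creeps into the exponential rate. A secondary subtlety is verifying that Bogatyrev's argument, stated for hyperelliptic Schottky groups, uses only the Fuchsian/real-line structure and the Schottky criterion and not any genuinely hyperelliptic feature; a careful reading of \cite[Lemma~6.1]{Bogatyrev} should confirm this, which is why the statement attributes the result to Bogatyrev.
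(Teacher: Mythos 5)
Your proposal takes essentially the same route as the paper: conjugate by the M\"obius transformation $T$ sending the unit circle to the real line, observe that Bogatyrev's argument for \cite[Lemma~6.1]{Bogatyrev} then applies verbatim in the Fuchsian real-line setting, and use M\"obius invariance of cross-ratios to see that $\mathbf{q}$ (and hence the exponential rate) is unchanged. Your extra bookkeeping about the conformal distortion of $\mathrm{dist}(\mathbf{K},\Lambda(G))$ is in fact more careful than the paper's one-line proof, which simply asserts the estimates carry over.
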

\begin{proof}
The proof of \cite[Lemma~6.1]{Bogatyrev} applies line by line, since with the M\"obius transformation $T$ our circles are all orthogonal to the real line. M\"obius transformations preserve cross-ratios, hence the estimates in loc.cit. are the same.
\end{proof}
Now the key idea is that once we fix $\eta, \theta$, the rate of convergence is a function of the distance of our sample points from the real line. Though we might apriori be missing key geometries of the curve by sampling so far away from the limit set, but since the structure is algebraic, knowing even a little bit of the curve should give us all the information we need. To obtain explicit numerical information, the $o(1)$ quantity must be made explicit.

In the second case, we refer the reader to \cite{Schmies2011}, but provide a specific case for comparison. In this case instead of truncating series via word length, the author allows for a more general truncation option via any subset $S$ of $G/G_n$ which is \textit{suffix closed}, which captures the idea that $\tilde{f} \cdot f\in S$ should also have $f\in S$, but for convenience we will present the idea of convergence for $S$ to be all words in $G/G_n$ of length less than $k$, and set $F$ to be a fixed fundamental domain of $S$ in $\C$. Set $|\gamma_f|$ to be the modulus of the bottom left entry the $2x2$ matrix representing $f$, which controls the radius of the isometric circles. We will consider a function $\kappa_k$ for $k\in N$ which is very close to $\displaystyle{\min_{\sigma\in G, |\sigma| =k} \abs{ \frac{\gamma_\sigma}{\gamma_{\hat{\sigma}}}}}$ (\cite[Lemma 4.2.8]{Schmies2011}), where $\hat{\sigma}$ is the word of length $k-1$ formed by removing the right-most generator in the word. This captures the successive ratios of the circle radii when applying generators. If there is some $k$ large enough so that $2g-1< \kappa_k^{-2}$ then there is guaranteed convergence of the series \cite[Theorem 4.2.10]{Schmies2011}.

We are now prepared to quantify convergence.

\begin{lem}[Lemma 4.3.4 \cite{Schmies2011}]
    Fix $k \geq 1$. For $k$ large enough
    \[
    \begin{split}&\abs{\sum_{|f|>k, f \in G/G_n} \frac{1}{z-f(B_n)} - \frac{1}{z-f(A_n)}} < \left(\min_{\sigma \in G, |\sigma| = k}\min_{z\in F} |z-\sigma(z)|\right)^{-2}\cdot \\
    &\left( \frac{(1-\kappa_{k}^{-2})|A_n - B_n|}{(1+\kappa_{k}^{-2})(1-(2g-1)\kappa_{k}^{-2})} \right)\sum_{|f| = k} \frac{|\gamma_f|^{-2}}{\min_{z\in F} |A_n - f^{-1}(z)|\cdot\min_{z\in F} |B_n - f^{-1}(z)| } .
    \end{split}
    \]
\end{lem}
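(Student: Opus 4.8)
The plan is to bound the tail of the Poincaré theta series by comparing it term-by-term with the geometric quantities $|\gamma_f|$ controlling the radii of the isometric circles, following the strategy of Schmies. First I would recall that for a Möbius transformation $f$ with matrix $\begin{bmatrix} a & b\\ \gamma_f & d\end{bmatrix}$, the isometric circle of $f^{-1}$ has center $f(\infty)$ and radius $|\gamma_f|^{-1}$, so that for $z$ in the fundamental domain $F$ the image $f^{-1}(z)$ lies inside the isometric circle of $f$, and standard estimates give $|f^{-1}(z) - f^{-1}(w)| \le |\gamma_f|^{-2}\,|z-w|\cdot(\text{distance-to-limit-point factors})$. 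The key algebraic identity is
\[
\frac{1}{z - f(B_n)} - \frac{1}{z-f(A_n)} \;=\; \frac{f(A_n) - f(B_n)}{(z-f(B_n))(z-f(A_n))},
\]
and one rewrites $f(A_n) - f(B_n)$ via the derivative/cocycle formula $f(A_n)-f(B_n) = \dfrac{A_n - B_n}{(\gamma_f A_n + d)(\gamma_f B_n + d)}$, which after noting $\gamma_f A_n + d = \gamma_f(A_n - f^{-1}(\infty))$ and similarly for $B_n$ produces exactly the factor $|\gamma_f|^{-2}\,|A_n - B_n|\,\big(|A_n - f^{-1}(\infty)|\,|B_n - f^{-1}(\infty)|\big)^{-1}$ — up to replacing $f^{-1}(\infty)$ by the worst-case point of $F$, which is why the minima over $z\in F$ appear in the statement.

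Next I would handle the two denominators $|z - f(B_n)|$ and $|z-f(A_n)|$: since $f(A_n), f(B_n)$ lie in the image of $F$ under $f$, and $z\in F$, one lower bounds each by $\min_{z\in F}|z - f(z)|$ — this is where the prefactor $\big(\min_{\sigma\in G,\,|\sigma|=k}\min_{z\in F}|z-\sigma(z)|\big)^{-2}$ comes from, after noting that for $|f|>k$ the contracted image is contained in that of the length-$k$ truncation. Then the sum over all $f$ with $|f|>k$ is reorganized as a sum over the length-$k$ prefixes $\sigma$ times a sum over the remaining suffixes; the suffix sum is controlled by the geometric-series bound $\sum_{j\ge 1}(2g-1)^j \kappa_k^{-2j} = \dfrac{(2g-1)\kappa_k^{-2}}{1-(2g-1)\kappa_k^{-2}}$ valid once $(2g-1)\kappa_k^{-2}<1$, since each additional generator multiplies the relevant radius ratio by at most $\kappa_k^{-1}$ (Lemma 4.2.8) and there are at most $2g-1$ choices at each step. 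Combining the per-prefix factor $|\gamma_\sigma|^{-2}$ with the radius/limit-point factors and the geometric tail gives, after absorbing the numerical constants $(1-\kappa_k^{-2})$, $(1+\kappa_k^{-2})^{-1}$ coming from the relation between the cross-ratio multiplier $\mu$ and $\kappa_k$, precisely the claimed inequality.

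The main obstacle I anticipate is the bookkeeping for the suffix-closed decomposition: one must verify that summing the bound over suffixes of every length $\ge 1$ attached to a fixed length-$k$ prefix $\sigma$ really does reproduce the single prefactor $|\gamma_\sigma|^{-2}$ times a convergent geometric series in $\kappa_k^{-2}$, and that the limit-point denominators $\min_{z\in F}|A_n - f^{-1}(z)|$ can be uniformly replaced by the prefix-only quantity $\min_{z\in F}|A_n - \sigma^{-1}(z)|$ up to the constant already accounted for — in other words that the nested isometric circles are genuinely shrinking at the controlled rate. Since the analogous statement for arbitrary suffix-closed $S$ is exactly \cite[Lemma 4.3.4]{Schmies2011}, the proof reduces to quoting that lemma and specializing $S$ to the set of words of length $\le k$; so in practice I would simply invoke \emph{loc.\,cit.}, remarking only that all hypotheses (discreteness, the Schottky condition from \cref{prop:schottkycondition}, and the existence of $k$ with $(2g-1)<\kappa_k^{-2}$) hold for our Fuchsian Schottky groups $G_{\eta,\theta}$.
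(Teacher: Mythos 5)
Your proposal is correct and lands in essentially the same place as the paper: the paper states this lemma purely as a citation of Schmies (Lemma 4.3.4 of \cite{Schmies2011}), specialized to the suffix-closed set of words of length at most $k$, and offers no independent proof, which is exactly what your final paragraph does. Your preliminary sketch of the mechanism (the cross-ratio identity $f(A_n)-f(B_n)=(A_n-B_n)/((\gamma_f A_n+d)(\gamma_f B_n+d))$, the isometric-circle bounds producing the $|\gamma_f|^{-2}$ and $\min_{z\in F}$ factors, and the geometric series in $(2g-1)\kappa_k^{-2}$) is a faithful account of where each factor in the bound comes from, but it is supplementary to, not a replacement for, the citation.
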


We remark that the benefit of this lemma is the lack of the $o(1)$ term that needs quantified. However unlike the first case where the cross ratios are simple to compute, in this case we need to compute minimum distances to the fundamental domain over many points, which is computationally expensive to do explicitly. Moreover, here we can see the benefit of choosing different subsets of $S$ instead of taking all words of length up to $k$ as for large $k$ this is a very large set we need to sum over in the computation of the error term as well as $\kappa_k$. For more discussion on computation see \cite[Section 4.4]{Schmies2011}.

In conclusion there do seem to be some methods for computing error terms, but they are quite involved and require additional computational tools beyond the goal of this paper.

\subsection{Other families}
We provide some references that would allow the interested reader to consider examples beyond the family of curves presented in \cref{def:family}. First, if the reader is interested in $M$-curves, then the Poincare theta series always converge and the Schottky group is Fuchsian, and the family of such curves can be parametrized \cite[Section 1.8.3]{BK13}. In general, we can construct an open subset in $\C^{3n}$ where all of the matrices satisfy the Schottky condition \cite[Theorem 31]{BK13}, which provides a sufficient condition for absolute convergence of the Poincare theta series. The following family give examples where we have circles sufficiently far apart to guarantee convergence. 
\begin{ex} Fix $g\geq 2$. For each $i =1,\ldots, g$ define transformations $f_i=\begin{bmatrix} a_i & b_i \\ c_i & d_i \end{bmatrix}$ with the following conditions.
\begin{enumerate}
  \item For all $ 1 \leq i < j \leq g $, to ensure the isometric circles of $ f_i $ and $ f_j $ are disjoint:
        \begin{equation*}
          \min\left\{\abs{\frac{a_i}{c_i} - \frac{a_j}{c_j} } , 
          \abs{\frac{a_i}{c_i} + \frac{d_j}{c_j} } ,
          \abs{\frac{d_i}{c_i} - \frac{d_j}{c_j} }\right\} > \frac{1}{\abs{c_i}} + \frac{1}{\abs{c_j}}
        \end{equation*}
  \item For all $ 1\leq i\leq g $, to ensure that the two isometric circles of $\sC_i, \sC_i'$ are disjoint:
        \begin{gather*}
          \abs{\frac{a_i}{c_i} + \frac{d_i}{c_i} } > \frac{2}{\abs{c_i}} \iff  \abs{a_i + d_i} > 2.
        \end{gather*}
\end{enumerate}
From (2) we get loxodromicity of the generators for free and only the $ 3 \binom{n}{2} + n = \frac{1}{2}(3n^2 - n) $ conditions listed need to be satisfied for the group
to be Schottky. 

\end{ex}

\subsection{A few words about moduli} \label{sec:finalmoduli}
A (not necessarily classical) Schottky group can be characterized as a discrete subgroup of $\mathrm{PSL}_2(\C)$ freely generated by purely loxodromic elements \cite{Maskit1967}. It is known that there exists non-classical Schottky groups, see \cite{Marden1974} and references therein.

Following \cite{Bers1975} we define the the Schottky space, denoted by $\mathcal{S}_g$, as the space of sets of $g$ elements in $\mathrm{PSL}_2(\C)$ that generate a Schottky group up to M\"obius transformations. In order to define a holomorphic structure on $\mathcal{S}_g$, it can be proved that $\mathcal{S}_g$ is the quotient of the \emph{Teichm\"uller space} $\mathcal{T}_g$ by a (not normal) subgroup. Fix $G$ a Schottky group; then locally around $G$ the space $\mathcal{S}_g$ parametrizes holomorphic deformations of the coefficients of generators of $G$: a choice of generators for $G$ is an element $Z_0 \in (\PSL(2,\C))^g$, and a holomorphic deformation of $ Z_0 $ is a holomorphic map $\mu: \Delta \to (\PSL(2,\C))^g$
such that $ \mu(0) = Z_0 $. Recall the following.

\begin{thm}
Every algebraic curve $C$ of genus $g>1$ is uniformized by a Schottky group of rank $g$, and these Schottky groups lie in an open set $ U \subseteq \mathcal{S}_g$ with natural coordinates (entries of matrices generating the group) which is locally the neighbourhood of that curve $C$ in $\mathcal{M}_g$.    
\end{thm}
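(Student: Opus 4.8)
The plan is to combine the uniformization theorem for Riemann surfaces with the local deformation theory that makes the passage between $\mathcal{S}_g$ and $\mathcal{M}_g$ holomorphic. First I would invoke \cref{prop:uniform_klein} (Kleinian uniformization) in the sharper form due to Koebe: every compact Riemann surface $C$ of genus $g>1$ is biholomorphic to $\Omega(G)/G$ for some Schottky group $G$ of rank $g$, which gives the retrosection theorem. This produces at least one point of $\mathcal{S}_g$ lying over $[C]\in\mathcal{M}_g$; a choice of ordered free generators of $G$ is a point $Z_0\in(\PSL(2,\C))^g$, well defined up to simultaneous M\"obius conjugation, so $Z_0$ determines a point of $\mathcal{S}_g$.

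Next I would set up the local picture. Since $\mathcal{S}_g$ is the quotient of Teichm\"uller space $\mathcal{T}_g$ by a (non-normal, discrete, torsion-free) subgroup of the mapping class group acting freely and properly discontinuously, the natural projection $\mathcal{T}_g\to\mathcal{S}_g$ is a holomorphic covering; composing with $\mathcal{T}_g\to\mathcal{M}_g$ exhibits a holomorphic map $\pi\colon\mathcal{S}_g\to\mathcal{M}_g$ which, near $Z_0$, is a local biholomorphism onto a neighbourhood of $[C]$. Concretely, one shows that the entries of the generating matrices furnish local holomorphic coordinates on $\mathcal{S}_g$: the Schottky condition (disjointness of the paired isometric circles, loxodromicity) is open, so an open set $U\ni Z_0$ in $(\PSL(2,\C))^g$ consists of Schottky data, and the induced family of quotient surfaces $\Omega(G_Z)/G_Z$ over $U$ is a holomorphic family of genus-$g$ curves, hence factors through the coarse moduli space. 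That $\pi$ is a local isomorphism near $Z_0$ follows by comparing tangent spaces: the deformations of $G$ inside $\PSL(2,\C)^g$ modulo conjugation have dimension $3g-3$, matching $\dim\mathcal{M}_g$, and the differential is injective because a nontrivial deformation of $G$ that did not move $[C]$ would, by rigidity of the marking on the Schottky cover, be realized by conjugation.

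The step I expect to be the main obstacle is precisely this last point — verifying that $\pi$ is a local biholomorphism rather than merely holomorphic and surjective onto a neighbourhood. It requires the Ahlfors--Bers theory: identifying the tangent space to $\mathcal{S}_g$ at $Z_0$ with a space of Beltrami differentials (equivalently, with $H^1$ of the surface with coefficients in the sheaf of holomorphic vector fields, which has dimension $3g-3$ for $g>1$), showing the derivative of $\pi$ is an isomorphism onto $T_{[C]}\mathcal{M}_g$, and using that $C$ has at most finitely many automorphisms so that $\mathcal{M}_g$ is a complex orbifold which is a genuine manifold near the (generic) non-special curve. All of this is classical and I would cite \cite{Bers1975} and \cite{maskit} for it rather than reprove it; the point of stating the theorem here is to package these facts into the assertion that the matrix entries give honest local coordinates on $\mathcal{M}_g$ near any curve, which is exactly what the numerical experiments in \cref{sec:NumericalExperiments} implicitly rely on.
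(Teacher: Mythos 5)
The paper does not actually prove this statement: it is presented as a recollection of classical facts, with the surrounding discussion pointing to \cite{Bers1975}, \cite{Maskit1967}, and \cite{maskit}. Your proposal reconstructs exactly the argument those references supply --- Koebe's retrosection theorem for existence of a rank-$g$ Schottky uniformization, openness of the Schottky condition in $(\PSL(2,\C))^g$, and the Ahlfors--Bers identification of the tangent space with a $(3g-3)$-dimensional space of Beltrami differentials to see that the matrix entries give local holomorphic coordinates. So in substance you are following the same route the paper intends, just with the citations unpacked.

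One caveat is worth flagging, because your own hedge exposes it. You conclude that $\pi\colon\mathcal{S}_g\to\mathcal{M}_g$ is a local biholomorphism ``near the (generic) non-special curve,'' but the theorem as stated asserts this for \emph{every} curve $C$ of genus $g>1$ --- and the curves $S_{\eta,\theta}$ that are the paper's main objects carry a nontrivial $D_g$-action, so they are precisely non-generic. At such a point $[C]$ the coarse moduli space is only an orbifold, $\mathcal{T}_g\to\mathcal{M}_g$ is branched, and the composite $\mathcal{S}_g\to\mathcal{M}_g$ is at best a finite-to-one open map near $Z_0$, not a biholomorphism onto a neighbourhood. The honest statement is that $U$ maps onto a neighbourhood of $[C]$ as a local (orbifold) chart, with the residual finite automorphism group acting; your tangent-space comparison still goes through at the level of Teichm\"uller space, where the marked deformation theory is unramified, and descends to $\mathcal{M}_g$ only in the quotient sense. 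With that reading, your sketch is correct and matches the classical sources; without it, the final step of your argument (injectivity of $d\pi$ implying a local isomorphism onto a manifold neighbourhood in $\mathcal{M}_g$) does not apply to the symmetric curves the paper actually studies.
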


Thus one advantage of studying Schottky groups as models for algebraic curves is the ease of construction: one can immediately write down curves of arbitrary genus (at least numerically, as we studied in earlier sections of this paper) which is a very hard thing to do with purely algebraic machinery \cite[\S 6F]{HarrisMorrison98}.

We should point out however that the matrix entries do not provide a nice coordinate system in any geometric sense: they are very local and do not represent any geometric quantities: one natural choice of coordinates are the Fenchel–Nielsen coordinates, which can be generalised to the Kleinian group setting, but the relationship between these coordinates and the matrices generating the uniformizing groups are very subtle. 

A promising research direction comes from the theory of compactifications of moduli spaces. The extended Schottky space (cf. \cite{GerritzenHerrlich1988}) parametrizes mild degenerations of Riemann surfaces. This in turn correspond to stable curves, in the sense of Deligne-Mumford. An interesting question will be to find the correct degeneration of our Schottky groups, which correspond to degenerations of the configurations of circles on the plane, to get some canonical embedded stable curves, for which the ideal might be easier to compute.

From the theoretical point of view there are several follow-up questions related with the family introduced in \cref{def:family}. A natural question is the field of definition of our family in \cref{def:family}. For example, we do not know if for $\eta,\theta$ rational, the corresponding algebraic curve $S_{\eta,\theta}$ in $\cp^{g-1}$ will be defined over $\Q$. Lastly we mention that our family defines a map $\psi: U\to \mathcal{M}_g$ depending on two real parameters $\eta,\theta$. Hence makes sense to study its regularity. Although continuity of $\psi$ is immediate, we do not know if, for example, $\psi$ is differentiable.

\bibliographystyle{alpha}
\bibliography{Sources}

\newcommand{\etalchar}[1]{$^{#1}$}
\begin{thebibliography}{BKSW18}

\bibitem[AC21]{AC21}
Daniele Agostini and Lynn Chua.
\newblock Computing theta functions with {Julia}.
\newblock {\em J. Softw. Algebra Geom.}, 11:41--51, 2021.

\bibitem[ACG11]{ACGVolII}
Enrico Arbarello, Maurizio Cornalba, and Phillip~A. Griffiths.
\newblock {\em Geometry of algebraic curves. {Volume} {II}. {With} a
  contribution by {Joseph} {Daniel} {Harris}}, volume 268 of {\em Grundlehren
  Math. Wiss.}
\newblock Berlin: Springer, 2011.

\bibitem[AG22]{AG22}
Simonetta Abenda and Petr~G. Grinevich.
\newblock Real regular {KP} divisors on {{\({\texttt{M}}\)}}-curves and totally
  non-negative {Grassmannians}.
\newblock {\em Lett. Math. Phys.}, 112(6):64, 2022.
\newblock Id/No 115.

\bibitem[BBE{\etalchar{+}}94]{Algebro_geometric}
E.~D. Belokolos, A.~I. Bobenko, V.~Z. Enol'skij, A.~R. Its, and V.~B. Matveev.
\newblock {\em Algebro-geometric approach to nonlinear integrable equations}.
\newblock Berlin: Springer-Verlag, 1994.

\bibitem[BEKS17]{julia}
Jeff Bezanson, Alan Edelman, Stefan Karpinski, and Viral~B Shah.
\newblock Julia: A fresh approach to numerical computing.
\newblock {\em SIAM review}, 59(1):65--98, 2017.

\bibitem[Ber75]{Bers1975}
Lipman Bers.
\newblock Automorphic forms for {S}chottky groups.
\newblock {\em Advances in Math.}, 16:332--361, 1975.

\bibitem[BHQ18]{BHQ18Real}
Eslam Badr, Rub{\'e}n~A. Hidalgo, and Sa{\'u}l Quispe.
\newblock Riemann surfaces defined over the reals.
\newblock {\em Archiv der Mathematik}, 110(4):351--362, 2018.

\bibitem[BK11]{BK13}
Alexander~I. Bobenko and Christian Klein, editors.
\newblock {\em Computational approach to {Riemann} surfaces.}, volume 2013 of
  {\em Lect. Notes Math.}
\newblock Berlin: Springer, 2011.

\bibitem[BKSW18]{BKSW2018}
Paul Breiding, Sara Kali\v{s}nik, Bernd Sturmfels, and Madeleine Weinstein.
\newblock Learning algebraic varieties from samples.
\newblock {\em Rev. Mat. Complut.}, 31(3):545--593, 2018.

\bibitem[Bob88]{Bobenkofinitegap}
Alexander Bobenko.
\newblock Schottky uniformization and finite-gap integration.
\newblock {\em Soviet Math. Dokl.}, 36(1):38--42, 1988.

\bibitem[Bog12]{Bogatyrev}
Andrei Bogatyrev.
\newblock {\em Extremal polynomials and {Riemann} surfaces. {Translated} from
  {Russian} by {Nikolai} {Kruzhilin}}.
\newblock Springer Monogr. Math. Berlin: Springer, 2012.

\bibitem[{\c{C}}FM22]{CFM22}
T\"urk\"u~\"Ozl\"um {\c{C}}elik, Samantha Fairchild, and Yelena Mandelshtam.
\newblock Crossing the transcendental divide: from translation surfaces to
  algebraic curves, 2022.

\bibitem[DHB{\etalchar{+}}04]{computing04}
Bernard Deconinck, Matthias Heil, Alexander Bobenko, Mark van Hoeij, and Marcus
  Schmies.
\newblock Computing {Riemann} theta functions.
\newblock {\em Math. Comput.}, 73(247):1417--1442, 2004.

\bibitem[FK92]{farkas_kra}
Hershel~M. Farkas and Irwin Kra.
\newblock {\em Riemann surfaces}.
\newblock Number~72 in Graduate Texts in Mathematics. Springer, 2 edition,
  1992.

\bibitem[GH78]{GriffithsHarris1978}
Phillip Griffiths and Joseph Harris.
\newblock {\em Principles of algebraic geometry}.
\newblock Pure and Applied Mathematics. Wiley-Interscience [John Wiley \&
  Sons], New York, 1978.

\bibitem[GH88]{GerritzenHerrlich1988}
L.~Gerritzen and F.~Herrlich.
\newblock The extended {S}chottky space.
\newblock {\em J. Reine Angew. Math.}, 389:190--208, 1988.

\bibitem[GK05]{KeenGilman05}
Jane Gilman and Linda Keen.
\newblock The geometry of two generator groups: hyperelliptic handlebodies.
\newblock {\em Geom. Dedicata}, 110:159--190, 2005.

\bibitem[GvdP80]{GerritzenvanderPut}
Lothar Gerritzen and Marius van~der Put.
\newblock {\em Schottky groups and {Mumford} curves}, volume 817 of {\em Lect.
  Notes Math.}
\newblock Springer, Cham, 1980.

\bibitem[HF05]{HidalgoFigueroa2005}
Rub\'{e}n~A. Hidalgo and Jaime Figueroa.
\newblock Numerical {S}chottky uniformizations.
\newblock {\em Geom. Dedicata}, 111:125--157, 2005.

\bibitem[Hid02]{Hidalgo02}
Rub\'{e}n~A. Hidalgo.
\newblock Real surfaces, {R}iemann matrices and algebraic curves.
\newblock In {\em Complex manifolds and hyperbolic geometry ({G}uanajuato,
  2001)}, volume 311 of {\em Contemp. Math.}, pages 277--299. Amer. Math. Soc.,
  Providence, RI, 2002.

\bibitem[Hid05a]{Hidalgo2005}
Rub\'{e}n~A. Hidalgo.
\newblock Automorphism groups of {S}chottky type.
\newblock {\em Ann. Acad. Sci. Fenn. Math.}, 30(1):183--204, 2005.

\bibitem[Hid05b]{HidalgoRealSchottky}
Rubén~A. Hidalgo.
\newblock Real schottky parametrizations.
\newblock {\em Complex Variables, Theory and Application: An International
  Journal}, 50(6):401--426, 2005.

\bibitem[Hid24]{Hidalgo24}
Ruben~A. Hidalgo.
\newblock Computing the field of moduli of some non-hyperelliptic pseudo-real
  curves, 2024.

\bibitem[HM98]{HarrisMorrison98}
Joe Harris and Ian Morrison.
\newblock {\em Moduli of curves}, volume 187 of {\em Grad. Texts Math.}
\newblock New York, NY: Springer, 1998.

\bibitem[Ink05]{HomoLS}
Keijo Inkil\"a.
\newblock Homogenous least squares problem.
\newblock {\em Photogrammetric Journal of Finland}, 19(2):34--42, 2005.

\bibitem[JW16]{jwdessins}
Gareth~A. Jones and J{\"u}rgen Wolfart.
\newblock {\em Dessins d'enfants on {Riemann} surfaces}.
\newblock Springer Monogr. Math. Cham: Springer, 2016.

\bibitem[Kee80]{Keen80}
Linda Keen.
\newblock On hyperelliptic {Schottky} groups.
\newblock {\em Ann. Acad. Sci. Fenn., Ser. A I, Math.}, 5:165--174, 1980.

\bibitem[KK79]{KuribayashiKomiya79}
Akikazu Kuribayashi and Kaname Komiya.
\newblock On {W}eierstrass points and automorphisms of curves of genus three.
\newblock In {\em Algebraic geometry ({P}roc. {S}ummer {M}eeting, {U}niv.
  {C}openhagen, {C}openhagen, 1978)}, volume 732 of {\em Lecture Notes in
  Math.}, pages 253--299. Springer, Berlin, 1979.

\bibitem[KS22]{KS22}
Mario Kummer and Rainer Sinn.
\newblock Hyperbolic secant varieties of {{\(M\)}}-curves.
\newblock {\em J. Reine Angew. Math.}, 787:125--162, 2022.

\bibitem[Lya22]{Lyamaev}
S.~Yu. Lyamaev.
\newblock On approximate summation of {Poincar{\'e}} series in the {Schottky}
  model.
\newblock {\em Dokl. Math.}, 106(1):247--250, 2022.

\bibitem[Mar74]{Marden1974}
Albert Marden.
\newblock Schottky groups and circles.
\newblock In {\em Contributions to analysis (a collection of papers dedicated
  to {L}ipman {B}ers)}, pages 273--278. Academic Press, New York-London, 1974.

\bibitem[Mar16]{marden}
Albert Marden.
\newblock {\em Hyperbolic manifolds}.
\newblock Cambridge University Press, 2 edition, 2016.
\newblock First edition was published under the title ``Outer circles''.

\bibitem[Mas67]{Maskit1967}
Bernard Maskit.
\newblock A characterization of {S}chottky groups.
\newblock {\em J. Analyse Math.}, 19:227--230, 1967.

\bibitem[Mas87]{maskit}
Bernard Maskit.
\newblock {\em Kleinian groups}.
\newblock Number 287 in Grundlehren der mathematischen Wissenshaften.
  Springer-Verlag, 1987.

\bibitem[Mat24]{mathrepo}
MathRepo.
\newblock Mathematical research data repository.
\newblock {\em Max Planck Institute for Mathematics in the Sciences},
  \url{https://mathrepo.mis.mpg.de/Schottky2Curves/index.html}, 2024.

\bibitem[MSW02]{indras_pearls}
David Mumford, Caroline Series, and David Wright.
\newblock {\em Indra's pearls}.
\newblock Cambridge University Press, 2002.

\bibitem[Mum72]{Mumford1972}
David Mumford.
\newblock An analytic construction of degenerating curves over complete local
  rings.
\newblock {\em Compositio Math.}, 24:129--174, 1972.

\bibitem[Poi85]{poincare}
Henri Poincar\'e.
\newblock {\em Papers on Fuchsian functions}.
\newblock Springer, 1985.

\bibitem[PT21a]{PoineauTurchetti2021}
J\'{e}r\^{o}me Poineau and Daniele Turchetti.
\newblock Berkovich curves and {S}chottky uniformization {I}: the {B}erkovich
  affine line.
\newblock In {\em Arithmetic and geometry over local fields---{VIASM} 2018},
  volume 2275 of {\em Lecture Notes in Math.}, pages 179--223. Springer, Cham,
  [2021] \copyright 2021.

\bibitem[PT21b]{PoineauTurchetti2021II}
J\'{e}r\^{o}me Poineau and Daniele Turchetti.
\newblock Berkovich curves and {S}chottky uniformization {II}: analytic
  uniformization of {M}umford curves.
\newblock In {\em Arithmetic and geometry over local fields---{VIASM} 2018},
  volume 2275 of {\em Lecture Notes in Math.}, pages 225--279. Springer, Cham,
  [2021] \copyright 2021.

\bibitem[PT22]{PoineauTurchetti2022}
J\'{e}r\^{o}me Poineau and Daniele Turchetti.
\newblock Schottky spaces and universal {M}umford curves over {$\mathbb {Z}$}.
\newblock {\em Selecta Math. (N.S.)}, 28(4):Paper No. 79, 53, 2022.

\bibitem[Rat06]{Ratcliffe}
John~G. Ratcliffe.
\newblock {\em Foundations of hyperbolic manifolds}, volume 149 of {\em Grad.
  Texts Math.}
\newblock New York, NY: Springer, 2nd ed. edition, 2006.

\bibitem[Sch11]{Schmies2011}
Markus Schmies.
\newblock {\em Computing Poincar{\'e} Theta Series for Schottky Groups}, pages
  165--182.
\newblock Springer Berlin Heidelberg, Berlin, Heidelberg, 2011.

\bibitem[SD16]{SD16}
Christopher Swierczewski and Bernard Deconinck.
\newblock Computing {Riemann} theta functions in {Sage} with applications.
\newblock {\em Math. Comput. Simul.}, 127:263--272, 2016.

\end{thebibliography}

\end{document}